\def\myfontsize{10pt}
\def\newthm#1#2{\newtheorem{#1}[dummy]{#2}%
  \expandafter\def\csname#2\endcsname##1{\hyperref[#1:##1]%
  {{\rm #2~\ref*{#1:##1}}}}}
\theoremstyle{definition}
\newcommand{\Section}[1]{\hyperref[sec:#1]{Section~\ref*{sec:#1}}}
\newcommand{\Table}[1]{\hyperref[tab:#1]{Table~\ref*{tab:#1}}}
\newcommand{\Figure}[1]{\hyperref[fig:#1]{Figure~\ref*{fig:#1}}}
\newcommand{\eqn}[1]{\hyperref[eqn:#1]{{\rm(\ref*{eqn:#1})}}}
\DeclareMathOperator{\GL}{GL}
\newcommand{\Sp}{{\mathsf{Sp}}}
\DeclareMathOperator{\Gr}{Gr}
\DeclareMathOperator{\LG}{LG}
\DeclareMathOperator{\OG}{OG}
\DeclareMathOperator{\Span}{Span}
\DeclareMathOperator{\codim}{codim}
\DeclareMathOperator{\Pic}{Pic}
\DeclareMathOperator{\coeff}{Coeff}
\DeclareMathOperator{\ord}{ord}
\DeclareMathOperator{\vdim}{vdim}
\DeclareMathOperator{\Disc}{Disc}
\newcommand{\QH}{{QH}^*}
\newcommand{\comin}{\mathrm{comin}}
\newcommand{\vir}{\mathrm{vir}}
\newcommand{\res}{\mathrm{res}}
\newcommand{\ssm}{\smallsetminus}
\newcommand{\bP}{{\mathbb P}}
\newcommand{\C}{{\mathbb C}}
\newcommand{\R}{{\mathbb R}}
\newcommand{\Q}{{\mathbb Q}}
\newcommand{\Z}{{\mathbb Z}}
\newcommand{\cB}{{\mathcal B}}
\newcommand{\cO}{{\mathcal O}}
\newcommand{\cP}{{\mathcal P}}
\newcommand{\mm}{{\mathsf m}}
\newcommand{\mA}{{\mathsf A}}
\newcommand{\mB}{{\mathsf B}}
\newcommand{\mE}{{\mathsf E}}
\newcommand{\mH}{{\mathsf H}}
\newcommand{\mO}{{\mathsf O}}
\newcommand{\mP}{{\mathsf P}}
\newcommand{\mR}{{\mathsf R}}
\newcommand{\mpB}{{\mathsf p}_B}
\newcommand{\rmline}{{\mathrm{line}}}
\newcommand{\gw}[2]{\langle #1 \rangle^{\mbox{}}_{#2}}
\newcommand{\al}{{\alpha}}
\newcommand{\be}{{\beta}}
\newcommand{\ga}{{\gamma}}
\newcommand{\la}{{\lambda}}
\newcommand{\ev}{\operatorname{ev}}
\newcommand{\wh}{\widehat}
\newcommand{\wb}{\overline}
\newcommand{\ov}{\overline}
\newcommand{\pic}[2]{\includegraphics[scale=#1]{#2}}
\newcommand{\ignore}[1]{}
\newcommand{\Mb}{\wb{\mathcal M}}
\newcommand{\noin}{\noindent}
\newcommand{\qXcb}{q^{-1}[X^{(3,2)}]}
\newcommand{\qXdb}{q^{-1}[X^{(4,2)}]}
\newcommand{\qXcc}{q^{-1}[X^{(3,3)}]}
\newcommand{\qXcba}{q^{-1}[X^{(3,2,1)}]}
\newcommand{\qXbbb}{q^{-1}[X^{(2,2,2)}]}
\newcommand{\barM}{{\overline{\mathcal{M}}}}
\newcommand{\vTev}{{\mathrm{vTev}}}
\begin{document}

\title{Tevelev degrees in Gromov-Witten theory}

\ifdefined\gitdate
\date{\gitdate\ revision {\tt \gittag}}
\else
\date{December 29, 2021}
\fi

\author{Anders~S.~Buch}
\address{Department of Mathematics, Rutgers University, 110
  Frelinghuysen Road, Piscataway, NJ 08854, USA}
\email{asbuch@math.rutgers.edu}

\author{Rahul Pandharipande}
\address{Departement Mathematik, ETH Z\"urich, R\"amisstrasse 101, Z\"urich
  8044, Switzerland}
\email{rahul@math.ethz.ch}

\dedicatory{In memory of Bumsig Kim}

\begin{abstract}
  For a nonsingular projective variety $X$, the virtual Tevelev degree in
  Gromov-Witten theory is defined as the virtual degree of the morphism from
  $\barM_{g,n}(X,d)$ to the product $\barM_{g,n} \times X^n$. After proving a
  simple formula for the virtual Tevelev degree in the (small) quantum
  cohomology ring of $X$ using the quantum Euler class, we provide several exact
  calculations for flag varieties and complete intersections. In the cominuscule
  case (including Grassmannians, Lagrangian Grassmannians, and maximal
  orthogonal Grassmannians), the virtual Tevelev degrees are calculated in terms
  of the eigenvalues of an associated self-adjoint linear endomorphism of the
  quantum cohomology ring. For complete intersections of low degree (compared to
  dimension), we prove a product formula. The calculation for complete
  intersections involves the primitive cohomology. Virtual Tevelev degrees are
  better behaved than arbitrary Gromov-Witten invariants, and, by recent results
  of \cite{lian.pandharipande:enumerativity}, are much more likely to be
  enumerative.
\end{abstract}

\subjclass[2020]{Primary 14N35; Secondary 14M10, 14M15, 05E14}

\keywords{Tevelev degrees, Gromov-Witten invariants, quantum cohomology,
  quantum Euler class, complete intersections, flag varieties}

\maketitle

\setcounter{tocdepth}{1}
\tableofcontents


\section{Introduction}

\subsection{Virtual Tevelev degrees}

Let $\barM_{g,n}$ be the moduli space of Deligne-Mumford stable curves over $\C$
of genus $g$ with $n$ marked points. The moduli space $\barM_{g,n}$ is
nonsingular (as a stack), irreducible, and satisfies
\[
  \dim \barM_{g,n} = 3g-3+n \,.
\]
The stability condition implies $2g-2+n>0$.

Let $X$ be a nonsingular, projective, algebraic variety over $\C$ of dimension
$r$, and let $d \in H_2(X,\Z)$. The moduli space of stable maps
$\barM_{g,n}(X,d)$ has virtual dimension
\[
  \vdim \barM_{g,n}(X,d)= \int_d c_1(T_X) + (r-3)(1-g)+n \,,
\]
which equals the dimension of $\barM_{g,n}\times X^n$ if and only if
\begin{equation}\label{eqn:bbb}
  \int_d c_1(T_X)=r(n+g-1) \,.
\end{equation}
If the dimension constraint \eqn{bbb} holds, we expect to find a finite number
of maps from a fixed curve $(C,p_1,\ldots,p_n)$ of genus $g$ to $X$ of curve
class $d$ that send the marked points $p_i$ to fixed general points in $X$.
Tevelev degrees in Gromov-Witten theory are defined to be the corresponding
virtual count.

\begin{defn}\label{defn:mdef}%
  Let $g\geq 0$, $n\geq 0$, and $d \in H_2(X,\Z)$ satisfy $2g-2+n>0$ and the
  dimension constraint \eqn{bbb}. Let
  \[
    \tau : \barM_{g,n}(X,d)\to\barM_{g,n}\times X^n
  \]
  be the canonical morphism obtained from the domain curve and the evaluation
  maps. The \textbf{virtual Tevelev degree} $\vTev^X_{g,d,n} \in \Q$ is defined
  by
  \[
    \tau_*([\barM_{g,n}(X,d)]^\vir) =
    \vTev^X_{g,d,n} \cdot [\barM_{g,n}\times X^n]
    \in A^0(\barM_{g,n}\times X^n) \,.
  \]
  Here, $[\,]^\vir$ and $[\,]$ denote the virtual and usual fundamental classes,
  respectively. The invariant $\vTev^X_{g,d,n}$ is zero if the class $d \in
  H_2(X,\Z)$ is not effective, since then the moduli space $\Mb_{g,n}(X,d)$ is
  empty. If $g,n \geq 0$ and $d \in H_2(X,\Z)$ do not satisfy the dimension
  constraint \eqn{bbb}, we define $\vTev^X_{g,d,n}$ to vanish.
\end{defn}
\vspace{6pt}

For  $g,n,k \geq 0$ such that $2g-2+n+k > 0$, let
\[
  \tau : \barM_{g,n+k}(X,d)\to\barM_{g,n+k}\times X^n
\]
be the morphism obtained from the domain curve and evaluations at the first $n$
marked points. When the dimension constraint \eqn{bbb} holds, the more general
virtual degree $\vTev^X_{g,d,n,k} \in \Q$,
\begin{equation}\label{eqn:gvvt}
  \tau_*([\barM_{g,n+k}(X,d)]^\vir) =
  \vTev^X_{g,d,n,k} \cdot [\barM_{g,n+k}\times X^n]
  \in A^0(\barM_{g,n+k}\times X^n) \,,
\end{equation}
is proven in \Section{proof} to be independent of $k$, so
\[
   \vTev^X_{g,d,n,k}= \vTev^X_{g,d,n}
\]
in the stable case $2g-2+n > 0$. The definition of the virtual Tevelev degree
can be naturally extended to the
four{\footnote{$(g,n)=(0,0),(0,1),(0,2),(1,0)$.} unstable cases where $2g-2+n
\leq 0$ by
\[
  \vTev^X_{g,d,n}=\vTev^X_{g,d,n,k}
\]
for any sufficiently large $k$.

While general Gromov-Witten invariants of varieties can be complicated to
compute, we will see that the virtual Tevelev degrees are much better behaved.

\subsection{Quantum cohomology}

Let $\{\gamma_j\}$ be any basis\footnote{Cohomology and quantum cohomology will
always be taken here with $\Q$-coefficients.} of $H^*(X)$, and let
$\{\gamma_k^\vee\}$ be the dual basis defined by
\[
  \int_X \gamma_j \cup \gamma_k^\vee = \delta_{j,k} \,.
\]
Let $\QH(X)$ be the small quantum cohomology ring of $X$ defined via the 3-point
Gromov-Witten invariants in genus 0,
\[
  \gamma_i \star \gamma_j= \sum_{d \in H_2(X,\Z)}\sum_k \,
  \langle \gamma_i, \gamma_j, \gamma_k^\vee \rangle^X_{0,d} \,
  \cdot q^d \gamma_k \, \in \QH(X) \,,
\]
see \cite{fulton.pandharipande:notes} for an introduction. Let
\[
  \Delta = \sum_j \gamma_j^\vee \otimes \gamma_j \in H^*(X\times X)
\]
be the standard K\"unneth decomposition{\footnote{The order matters!}} of the
diagonal class.

\begin{defn}\label{defn:defE}
  The \textbf{quantum Euler class} of $X$ is
  \[
    \mE = \sum_j \gamma_j^\vee \star \gamma_j \, \in \QH(X) \,.
  \]
\end{defn}
\vspace{4pt}

The classical ($q=0$) part of the quantum Euler class is determined by the usual
topological Euler characteristic $\chi(X)$,
\[
  \mE = \chi(X) \cdot \mP\, + \, \text{$q$-corrections} \,,
\]
where $\mP\in H^{2r}(X)$ is the point class. Since $\mE$ is the image of
$\Delta$ under the canonical multiplication map
\[
  H^*(X)\otimes H^*(X) \xrightarrow{\ \star\ } \QH(X) \,,
\]
$\mE$ is independent of basis choice. The quantum Euler class was first
introduced{\footnote{The definition there differs slightly but is, in fact,
equivalent to ours.}} by Abrams in \cite{abrams:quantum}, see also \cite[Section
8]{chaput.manivel.ea:quantum*2}.

Our first result is that virtual Tevelev degrees can be computed in $\QH(X)$
using the point class and the quantum Euler class.

\begin{thm}\label{thm:qhred}%
  For $g,n \geq 0$ and $d \in H_2(X,\Z)$ we have
  \[
    \vTev^X_{g,d,n} = \coeff(\mP^{\star n} \star \mE^{\star g}, q^d \mP) \,.
  \]
\end{thm}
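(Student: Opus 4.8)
\medskip
\noindent\emph{Proof strategy.}
The plan is to compute the degree of $\tau$ over a conveniently chosen point of $\barM_{g,n}$ and to peel off the domain curve one node at a time using the splitting axiom of Gromov--Witten theory, reducing everything to the genus-$0$ three-point invariants that define $\star$.

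I would first assume $2g-2+n>0$; the unstable cases $2g-2+n\le 0$ then follow from the $k$-independence established in \Section{proof}, by working on $\barM_{g,n+k}(X,d)$ for $k\gg 0$ with the trivial insertion $\mathbf 1$ at the extra points. Capping the defining equation of $\vTev^X_{g,d,n}$ with the point class of $\barM_{g,n}\times X^n$ gives
\[
  \vTev^X_{g,d,n}\;=\;\int_{[\barM_{g,n}(X,d)]^\vir}
  \mathrm{st}^*[\pt_{\barM_{g,n}}]\,\cup\,\prod_{i=1}^n\ev_i^*\mP\,,
\]
where $\mathrm{st}\colon\barM_{g,n}(X,d)\to\barM_{g,n}$ is the stabilization morphism. (When the numerical constraint \eqn{bbb} fails, both sides vanish: the left one by definition, the right one because $\mP^{\star n}\star\mE^{\star g}$ is homogeneous of complex degree $r(n+g)$, so the coefficient of its $q^d$-part along the point class is forced to be zero unless \eqn{bbb} holds.) Since the degree of $\tau$ does not depend on the chosen point, I would take the point of $\barM_{g,n}$ to be a maximally degenerate stable curve $C_0$, i.e.\ one whose dual graph $\Gamma$ is connected and trivalent with all vertices of genus $0$, $n$ legs, and $b_1(\Gamma)=g$ (so $\Gamma$ has $2g-2+n$ vertices and $3g-3+n$ edges). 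Then $\mathrm{st}^*[\pt_{\barM_{g,n}}]$ is, up to a harmless positive rational factor, the class of the corresponding $0$-dimensional boundary stratum (equivalently, it is a product of $3g-3+n$ boundary divisor classes meeting in that stratum).

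Next I would apply the splitting axiom for virtual classes (see \cite{fulton.pandharipande:notes}) once for each of the $3g-3+n$ nodes of $C_0$: resolving a node inserts the K\"unneth diagonal $\Delta=\sum_j\gamma_j^\vee\otimes\gamma_j$ at the two resulting marked points and splits the curve class, a separating node decomposing the moduli space as a fibre product and a non-separating one lowering the genus by one. After all nodes are resolved, the displayed integral becomes a finite sum --- over effective distributions $(d_v)$ with $\sum_v d_v=d$ and over labelings of the edges of $\Gamma$ by a homogeneous basis of $H^*(X)$ --- of products over the genus-$0$ trivalent vertices $v$ of three-point invariants $\gw{\,\cdots\,}{0,d_v}$, with the point class $\mP$ inserted along each leg and a dual pair $(\gamma_j^\vee,\gamma_j)$ along each edge. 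The point of fixing the domain to be totally degenerate is precisely that it makes the positive-genus computation collapse onto a gluing of genus-$0$ three-point invariants, with no gravitational descendants or positive-genus moduli surviving.

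Finally I would reassemble this sum as the value of the two-dimensional TQFT attached to the commutative Frobenius algebra $(\QH(X),\star,\eta)$, where $\eta(a,b)=\int_X a\cup b$ is the classical Poincar\'e pairing; the quantum product is self-adjoint for $\eta$ because $\int_X(a\star b)\cup c=\sum_e q^e\gw{a,b,c}{0,e}$ is symmetric in $a,b,c$. Contracting the tensor network along $\Gamma$, the $n$ legs assemble into the $n$-fold quantum product $\mP^{\star n}$; each of the $g$ independent loops of $\Gamma$ contributes the handle operation $x\mapsto\sum_j (x\star\gamma_j^\vee)\star\gamma_j=x\star\mE$ --- exactly multiplication by the class $\mE$, which, as noted after \Definition{defE}, is the image of $\Delta$ under $\star$ (cf.\ \cite{abrams:quantum}); and the closing-off of the surface is the counit $\alpha\mapsto\int_X\alpha=\coeff(\alpha,\mP)$. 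Extracting the coefficient of $q^d$ then yields $\vTev^X_{g,d,n}=\coeff(\mP^{\star n}\star\mE^{\star g},q^d\mP)$. The step I expect to be the real obstacle is the third one: invoking the virtual splitting axiom node by node, keeping track of the diagonal insertions, the automorphism and multiplicity factors coming from the choice of $C_0$, and the curve-class distributions, and verifying that the resulting contraction along $\Gamma$ is literally the Frobenius-algebra contraction --- in particular that each non-separating node contributes precisely multiplication by $\mE$.
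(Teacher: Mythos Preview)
Your approach is correct and shares the same core idea as the paper's proof --- degenerate the domain, apply the splitting axiom, and recognize iterated quantum products --- but the paper organizes the reduction more economically, in a way that bypasses exactly the bookkeeping you flag as ``the real obstacle.''

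Rather than choosing a maximally degenerate trivalent curve and contracting a tensor network over its dual graph, the paper proceeds in two clean steps. First, it applies the genus reduction axiom once, via the gluing map $\psi : \barM_{0,n+k+2g} \to \barM_{g,n+k}$ that identifies $g$ pairs of points on a single rational curve; this converts $\vTev^X_{g,d,n}$ into a genus-$0$ invariant $\langle \mP^{\otimes n} \otimes 1^{\otimes k} \otimes \Delta^{\otimes g} \rangle^X_{\odot,d,n+k+2g}$, where $\langle\alpha\rangle^X_{\odot,d,m} = \int_{\barM_{0,m}} I^X_{0,d,m}(\alpha) \cap \mP_{\Mb_{0,m}}$. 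Second, it proves a short lemma (Proposition~\ref{prop:bertram}) by induction on $m$ using the splitting axiom: these ``small'' $m$-point invariants compute exactly the $m$-fold quantum product. Combining the two steps gives $\coeff(\mP^{\star n} \star \mE^{\star g}, q^d \mP)$ directly. The advantage over your trivalent-graph picture is that no automorphism or multiplicity factors ever enter (genus-$0$ moduli with $\geq 3$ marked points are automorphism-free schemes), and the Frobenius/TQFT reinterpretation is replaced by a one-line induction. Your route would work, but the paper's is shorter and avoids the verification you were worried about.
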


\noindent The notation $\mA^{\star n}$ denotes the power of
$\mA\in \QH(X)$ with respect to the quantum product. A canonical way to
write the coefficient in \Theorem{qhred} is
\[
  \coeff(\mP^{\star n} \star \mE^{\star g}, q^d \mP) =
  \int _X \coeff(\mP^{\star n}\star \mE^{\star g}, q^d) \,.
\]

Tevelev degrees have been studied in several contexts
\cite{cela.lian:generalized, cela.pandharipande.ea:tevelev, farkas.lian:linear,
lian.pandharipande:enumerativity, tevelev:scattering} starting with $X=\bP^1$.
Almost always, virtual Tevelev degrees are much better behaved than enumerative
Tevelev degrees \cite{lian.pandharipande:enumerativity} and general
Gromov-Witten invariants. Our results concern exact calculations of virtual
Tevelev degrees in two main cases: cominuscule flag varieties and low degree
complete intersections in projective spaces. An asymptotic equality between
virtual and enumerative Tevelev degrees for certain Fano varieties (including
flag varieties and low degree hypersurfaces) is proved in
\cite{lian.pandharipande:enumerativity}, so many of our calculations are actual
curve counts.

\subsection{Virtual Tevelev degrees: hypersurfaces}

Using \Theorem{qhred} and properties of the quantum cohomology of hypersurfaces,
we obtain the following results:

\vspace{8pt}
\noindent $\bullet$ %
The projective space case $X=\bP^r$ has a particularly simple answer:
 \begin{equation}\label{eqn:projspace}
  \vTev^{\bP^r}_{g,d,n}=(r+1)^g
\end{equation}
whenever the dimension constraint \eqn{bbb} is satisfied.

\vspace{8pt}
\noindent $\bullet$ The case of a quadric hypersurface $Q^r\subset \bP^{r+1}$
takes a special form. Let
\[
  \delta = \begin{cases}
    1 & \text{if $r$ is odd,} \\
    2 & \text{if $r$ is even.}
  \end{cases}
\]

\begin{thm}\label{thm:quadric}
  For nonsingular quadrics $Q^r$ of dimension $r\geq 3$,
  \[
    \vTev^{Q^r}_{g,d,n} = \frac{(2r)^g + (-1)^d (2\delta)^g}{2}
  \]
  whenever the dimension constraint \eqn{bbb} is satisfied.
\end{thm}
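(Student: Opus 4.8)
The plan is to apply \Theorem{qhred} and then run the resulting computation inside the small quantum cohomology ring $\QH(Q^r)$, whose structure is classically understood. Write $h\in H^2(Q^r)$ for the hyperplane class and $\mP\in H^{2r}(Q^r)$ for the point class, and recall $\int_{Q^r}1=0$, $\int_{Q^r}\mP=1$, $\int_{Q^r}h^r=2$. Since $c_1(T_{Q^r})=r\,h$, the dimension constraint \eqn{bbb} is the equation $r\,d=r(n+g-1)$, so we may assume $d=n+g-1$ (otherwise $\vTev^{Q^r}_{g,d,n}=0$ by definition). I would take from the literature on quantum cohomology of quadrics the following normalizations: in $\QH(Q^r)$ the relation $h^{\star(r+1)}=4q\,h$ holds, and $\mP=\tfrac12\bigl(h^{\star r}-2q\bigr)$; here $h^{\star r}=2\mP+2q$, the correction $2q$ being the degree-one invariant $\langle h^{r-1},h,\mP\rangle_{0,1}=2$, which counts the two lines through a general point of $Q^r$ meeting a general conic. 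For $r$ odd one has $\QH(Q^r)=\Q[h,q]/\bigl(h^{\star(r+1)}-4q\,h\bigr)$. For $r$ even, $\QH(Q^r)=V_h\oplus\Q[q,q^{-1}]\,s$, where $V_h=\Q[q,q^{-1}][h]/\bigl(h^{\star(r+1)}-4q\,h\bigr)$ is the codimension-one subalgebra generated by $h$ and $s=a-b$ is the difference of the two classes of maximal linear subspaces in $H^r(Q^r)$, satisfying $h\star s=0$ and $s^{\star 2}=(-1)^{r/2}\,2(\mP-q)$; note $s$ is orthogonal to $V_h$, since $\langle s,h^{\star j}\rangle=\int_{Q^r}\bigl(h^{\star(j-1)}\star h\star s\bigr)=0$ for $j\ge1$ and $\int_{Q^r}s=0$.

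Two identities drive the calculation. First, $\mP^{\star2}=q^2$: from $h^{\star(r+1)}=4q\,h$ one gets $h^{\star 2r}=4q\,h^{\star r}$, hence $\mP^{\star2}=\tfrac14\bigl(h^{\star 2r}-4q\,h^{\star r}+4q^2\bigr)=q^2$. Second, the quantum Euler class is $\mE=(r+\delta)\mP+(r-\delta)q$. I would compute $\mE$ via its Casimir description: $\mE=\sum_j e_j^\vee\star e_j$ for any $\Q[q]$-basis $\{e_j\}$ of $\QH(Q^r)$ with Poincaré-dual basis $\{e_j^\vee\}$ (equivalently, the dual basis with respect to the pairing $\langle a,b\rangle=\int_{Q^r}(a\star b)$). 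For $r$ odd, in the basis $\{1,h,\dots,h^{\star r}\}$ the Gram matrix $\langle h^{\star i},h^{\star j}\rangle=\int_{Q^r}h^{\star(i+j)}$ has entry $2$ on the anti-diagonal $\{i+j=r\}$, entry $8q$ at $(r,r)$, and $0$ elsewhere, so the dual basis is $1^\vee=\tfrac12h^{\star r}-2q$, $(h^{\star i})^\vee=\tfrac12h^{\star(r-i)}$ for $1\le i\le r-1$, and $(h^{\star r})^\vee=\tfrac12$; summing $e_j^\vee\star e_j$ gives $\mE=\tfrac{r+1}{2}h^{\star r}-2q=(r+1)\mP+(r-1)q$. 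For $r$ even, the orthogonal splitting gives $\mE=\mE_{V_h}+\langle s,s\rangle^{-1}s^{\star2}$; the Gram matrix of $V_h$ has the same shape, so $\mE_{V_h}=(r+1)\mP+(r-1)q$, while $\langle s,s\rangle=\int_{Q^r}s^{\star2}=(-1)^{r/2}2$ gives $\langle s,s\rangle^{-1}s^{\star2}=\mP-q$, so $\mE=(r+2)\mP+(r-2)q$.

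Granting these, the rest is a short computation. From $\mP^{\star2}=q^2$ one has $\mP^{\star k}=q^k$ for $k$ even and $q^{k-1}\mP$ for $k$ odd; expanding $\mE^{\star g}=\bigl((r+\delta)\mP+(r-\delta)q\bigr)^{\star g}$ by the binomial theorem and collecting by parity of the $\mP$-exponent (using $(r+\delta)+(r-\delta)=2r$ and $-(r+\delta)+(r-\delta)=-2\delta$) yields
\[
  \mE^{\star g}=\tfrac{(2r)^g+(-1)^g(2\delta)^g}{2}\,q^g+\tfrac{(2r)^g-(-1)^g(2\delta)^g}{2}\,q^{g-1}\mP .
\]
Multiplying by $\mP^{\star n}$ (again $q^n$ or $q^{n-1}\mP$, with $\mP^{\star2}=q^2$) and then extracting $\coeff(\,\cdot\,,q^d\mP)=\int_{Q^r}\coeff(\,\cdot\,,q^d)$, where $\int_{Q^r}1=0$ and $\int_{Q^r}\mP=1$: only the coefficient of $q^{n+g-1}\mP$ survives, equal to $\tfrac{(2r)^g-(-1)^g(2\delta)^g}{2}$ when $n$ is even and $\tfrac{(2r)^g+(-1)^g(2\delta)^g}{2}$ when $n$ is odd. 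Since $d=n+g-1$ makes $(-1)^d=(-1)^{n+g-1}$ equal to $-(-1)^g$ when $n$ is even and $(-1)^g$ when $n$ is odd, in both cases $\vTev^{Q^r}_{g,d,n}=\tfrac{(2r)^g+(-1)^d(2\delta)^g}{2}$, which is the claim.

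The main obstacle is the even-dimensional input: the quantum correction $s^{\star2}=(-1)^{r/2}2(\mP-q)$, equivalently the value $\langle a-b,a-b,\mP\rangle_{0,1}=2(-1)^{r/2+1}$. This is the only step that distinguishes even from odd quadrics, and it is what produces the factor $\delta=2$ rather than $\delta=1$; every other step above is uniform in $r$. If the presentation of $\QH(Q^r)$ with these normalizations is available in the literature — for instance via an identification of $Q^r$ with a cominuscule Grassmannian, or by a direct enumerative computation — this can be quoted outright, and the proof reduces to the two displayed identities and the binomial bookkeeping.
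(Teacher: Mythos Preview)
Your proof is correct and follows the same overall architecture as the paper: reduce to \Theorem{qhred}, establish the two key identities $\mP^{\star 2}=q^2$ and $\mE=(r+\delta)\mP+(r-\delta)q$ in $\QH(Q^r)$, and finish with a binomial expansion sorted by parity. The endgame is essentially identical.

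The genuine difference is in how $\mE$ is obtained. The paper works in the Schubert basis and computes each summand $[X_u]\star[X^u]$ separately: for $\ell(u)\notin\{0,r/2,r\}$ the quantum Chevalley formula gives $\mP+q$, while for $\ell(u)=r/2$ the Seidel-class property of the middle Schubert classes forces $[X_u]\star[X^u]=\mP$ with no $q$-correction. Summing over $W^X$ yields $\mE=(r+\delta)\mP+(r-\delta)q$ uniformly, and the even/odd distinction enters only through $|W^X|=r+\delta$. You instead work in the basis $\{h^{\star i}\}$ (plus the primitive class $s$ when $r$ is even), invert the quantum Gram matrix explicitly to find the dual basis, and obtain $\mE_{V_h}=(r+1)\mP+(r-1)q$ uniformly; the extra $\mP-q$ in the even case then comes from the single invariant governing $s^{\star 2}$. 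Your route is closer in spirit to the paper's later treatment of complete intersections (\Section{cplint}), whereas the paper's proof here exploits the cominuscule structure. What the paper's approach buys is that the even-$r$ input you flag as ``the main obstacle'' is absorbed into the general Seidel machinery of \Theorem{seidel}, so no separate primitive-class Gromov--Witten invariant needs to be quoted; what your approach buys is a more self-contained computation that does not invoke the flag-variety combinatorics. One small point worth making explicit: your claim that $\mE$ can be computed with the quantum-pairing dual of any $\Q[q]$-basis is justified because the fundamental-class axiom gives $\int_{Q^r}(a\star b)=\int_{Q^r}(a\cdot b)$ for $a,b\in H^*(Q^r)$, so the classical and quantum Poincar\'e duals of a cohomology basis coincide, after which basis-independence of the Casimir over $\Q[q]$ finishes the argument.
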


We index here the curve classes of $Q^r\subset \bP^{r+1}$ for $r\geq 3$ by their
associated degree $d$ in $\bP^{r+1}$. For fixed genus $g$, using the
enumerativity results of \cite{lian.pandharipande:enumerativity}, the virtual
Tevelev degrees for $Q^r$ in sufficiently high degree $d$ are actual curve
counts.

The precise statement in enumerative geometry from
\cite{lian.pandharipande:enumerativity}
is the following. Let $C$ be a fixed general curve of genus $g$. The dimension
constraint for $Q^r$ is
\[
  d = n+g-1 \,.
\]
For $d$ (and hence $n$) sufficiently large, let $(C,p_1,\ldots,p_n)$ be defined
by general points of $C$. Let $x_1,\ldots, x_n \in Q^r$ be general points. Then,
the actual count of  maps
\[
  f: C \rightarrow Q^r \ \ \ \text{satisfying} \ \ f(p_i)=x_i
\]
where $f$ has degree $d$, is equal to $\vTev^{Q^r}_{g,d,n}$. Whether the
counting problem for quadrics can be solved directly by classical techniques is
an interesting question.

The excluded $r=2$ case of the quadric surface $Q^2\subset \bP^3$ has additional
curve classes since $Q^2 \cong \bP^1 \times \bP^1$. The virtual Tevelev degrees
of $Q^2$ are determined by a product rule in \Section{prodrule}.

\vspace{8pt}
\noindent $\bullet$
For low degree hypersurfaces, we also have a complete calculation. The curve
classes of hypersurfaces of dimension at least 3  are indexed by their
associated degree in projective space.

\begin{thm}\label{thm:hypersurface}%
  Let $X_e\subset \bP^{r+1}$ be a nonsingular hypersurface of degree $e\geq 3$
  and dimension $r\geq 2e-3$.  Then, for $g+n \geq 2$ we have
  \[
    \vTev^{X_e}_{g,d,n} = ((e-1)!)^n \cdot (r+2-e)^g\cdot e^{(d-n)e-g+1}
  \]
  whenever the dimension constraint \eqn{bbb} is satisfied.
\end{thm}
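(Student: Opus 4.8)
The proof rests on \Theorem{qhred}, which reduces the claim to computing the coefficient $\coeff(\mP^{\star n}\star\mE^{\star g}, q^d\mP)$ in the small quantum cohomology ring $\QH(X_e)$. The plan is: (i) isolate the relevant part of $\QH(X_e)$ in the Fano range $r\ge 2e-3$; (ii) compute the quantum Euler class $\mE$ and the point class $\mP$ there; (iii) extract the coefficient using the dimension constraint \eqn{bbb}.

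For (i), I would recall that by the Lefschetz theorem $H^*(X_e)$ is spanned by $1,H,\dots,H^r$ (with $H$ the hyperplane class) together with the primitive cohomology $H^r_{\mathrm{prim}}(X_e)$, and that on the ambient subalgebra $A\subset\QH(X_e)$ generated by $H$ the quantum product is governed by the relation $H^{\star(r+1)}=e^e q\,H^{\star(e-1)}$, so that $A\cong\Q[q][H]/(H^{r+1}-e^e q H^{e-1})$; this is Beauville's computation, applicable since the index $r+2-e$ is at least $2$ in our range. The bound $r\ge 2e-3$ (equivalently $r+2-e\ge e-1$) is exactly what keeps the primitive classes manageable: a dimension count on the genus-zero three-point invariants occurring in the quantum product shows that $\gamma\star\gamma'$ already lies in $A$ for primitive $\gamma,\gamma'$, with at most a degree-one correction supported on $q H^{e-2}$, and that $\mP$ (equal to $\tfrac1e H^r$, since $\int_X H^r=e$) lies in $A$. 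Consequently $\mE=\sum_j\gamma_j^\vee\star\gamma_j$ lies in $A$, and the entire expression $\mP^{\star n}\star\mE^{\star g}$ can be computed inside the explicit ring $A$.

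For (ii), with the basis $\{H^i\}_{i=0}^r$ ($H^i$ dual to $\tfrac1e H^{r-i}$) together with a basis of $H^r_{\mathrm{prim}}$ adapted to its intersection form, one has $\mE=\tfrac{r+1}{e}H^{\star r}+\sum_s\gamma_s^\vee\star\gamma_s$; the primitive summands contribute $\pm(\dim H^r_{\mathrm{prim}})\cdot\mP$ classically, so the classical part of $\mE$ is $\chi(X_e)\mP$ as expected, plus a $q H^{e-2}$ correction. The remaining ingredient is the explicit hypergeometric ($I$-function type) correction structure of $X_e$: the expansions of $H^{\star r}$, of $\mP$, and of the primitive products $\gamma_s\star\gamma_t$ in terms of quantum powers of $H$. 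This is where the primitive cohomology genuinely enters, and where the factor $(e-1)!$ and the powers of $e$ originate. I expect this step --- computing the corrections and checking that the dependence on $\dim H^r_{\mathrm{prim}}$ cancels in the end --- to be the main obstacle, especially at the boundary $r=2e-3$, where the relation and the corrections acquire additional lower-order terms.

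Finally, for (iii), I would reduce $\mP^{\star n}\star\mE^{\star g}$ in $A$ using $H^{\star(r+1)}=e^e q H^{\star(e-1)}$ --- or, equivalently, split $A$ over $\Q[q,q^{-1}]$ into the semisimple factor $\Q[q,q^{-1}][H]/(H^{r+2-e}-e^e q)$ (diagonalizable by idempotents) and the nilpotent factor $\Q[q,q^{-1}][H]/(H^{e-1})$. Each use of the relation contributes a factor $e^e q$; the dimension constraint $(r+2-e)d=r(n+g-1)$ forces exactly the reduction that lands on $q^d\mP$, whereupon the $g$ copies of $\mE$ contribute $(r+2-e)^g$, the $n$ copies of $\mP$ contribute $((e-1)!)^n$ (through the corrections identified in (ii)), and the residual normalizations assemble into $e^{(d-n)e-g+1}$, proving the formula. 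A short induction on $n$ and $g$ keeps the bookkeeping honest; consistency with $\vTev^{\P^r}_{g,d,n}=(r+1)^g$ in the $e=1$ limit is a useful check, and the hypothesis $g+n\ge 2$ serves only to avoid the small unstable cases.
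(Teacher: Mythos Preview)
Your outline follows the paper's strategy: reduce via \Theorem{qhred}, work in the restricted subring using the relation $\mH^{\star(r+1)}=e^e q\,\mH^{\star(e-1)}$, and show that both $\mP$ and $\mE$ lie there. Two points, however, are genuinely wrong or missing.

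First, the hypothesis $g+n\ge 2$ is \emph{not} a stability condition to be brushed aside; it is exactly what makes the coefficient extraction in step (iii) work. In your own decomposition over $\Q[q,q^{-1}]$, the nilpotent factor is $\Q[q,q^{-1}][H]/(H^{e-1})$. Under the projection (which kills $\mH^{\star r}$), both $\mP$ and $\mE$ map to nonzero multiples of $qH^{e-2}$, since each carries a $q\,\mH^{\star(e-2)}$ correction. A product of at least two such elements vanishes because $2(e-2)\ge e-1$ for $e\ge 3$; a single one does not. Thus $\mP^{\star n}\star\mE^{\star g}$ has vanishing nilpotent component iff $n+g\ge 2$, and only then does reading off the $q^d\mP$ coefficient reduce to the semisimple factor. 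Without this observation the sentence ``the dimension constraint forces exactly the reduction that lands on $q^d\mP$'' is false: for $(g,n)=(1,0)$ the answer is $\chi(X_e)$, not $r+2-e$. The paper handles the same phenomenon via an explicit ``discrepancy'' term, shown to vanish precisely when $g+n\ge 2$.

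Second, the formula $\mE=\tfrac{r+1}{e}\mH^{\star r}+\sum_s\gamma_s^\vee\star\gamma_s$ in step (ii) is incorrect: the restricted contribution is $\tfrac1e\sum_{i=0}^r \mH^i\star\mH^{r-i}$, and the summands are not all $\mH^{\star r}$ (indeed $\mH^i\neq\mH^{\star i}$ once $i>r+1-e$). The paper computes each $\mH^i\star\mH^{r-i}$ modulo $q^2$ via an explicit two-point line invariant (\Proposition{gw2pt}) and then sums; this is where the combinatorics producing $(e-1)!$ and $e^e$ actually lives. Note also that $r\ge 2e-3$ is equivalent to $\deg(q^2)>\deg(\mP)$, so modulo $q^2$ is exact throughout and there is no separate boundary difficulty at $r=2e-3$. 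The cancellation of $\chi(X_e)$ from the final answer is not automatic: the $q$-correction to $\mE$ carries a factor $r+2-e-\chi(X_e)$ (\Corollary{Deltaq2}), and it is only after multiplying by $\mH$ that the $\chi(X_e)$ dependence disappears (\Theorem{HDeltaq2}).
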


Notice that for any nonsingular projective Fano variety $X$, the virtual Tevelev
degrees $\vTev^X_{g,d,n}$ for which $g+n \leq 1$ are given by
\[
  \vTev^X_{0,d,0} = 0 \,, \ \ \ \ \
  \vTev^X_{0,d,1} = \delta_{d,0} \,, \ \ \ \ \
  \vTev^X_{1,d,0} = \delta_{d,0}\, \chi(X) \,,
\]
so they can safely be left out of \Theorem{hypersurface}.

Most of the previous work on the quantum cohomology of the hypersurface
$X_e\subset \bP^{r+1}$ concerns the subalgebra of classes restricted from
$\bP^{r+1}$. However, the definition of the quantum Euler class $\mE$ involves
{\em all} of the cohomology of $X_e$. Controlling the contributions of the
primitive cohomology is perhaps the most interesting{\footnote{See also
\cite{arguz.bousseau.ea:gromov-witten, hu:big} for recent results on the
Gromov-Witten theory of complete intersections which also confront the primitive
cohomology.}} aspect of the proof of \Theorem{hypersurface}.

Our techniques apply unchanged to the case of complete intersections of low
degree. The results for complete intersections take a very similar form and are
stated in \Theorem{ccii} of \Section{secci}. Various patterns are presented
beyond our degree/dimension ranges for hypersurfaces and complete intersections
in \Section{cplint}. There are many open questions.

\subsection{Virtual Tevelev degrees: cominuscule flag varieties}

The formulas for the virtual Tevelev degrees for projective spaces $\bP^r$ and
quadrics $Q^r$ have a different flavor than for the complete intersections of
\Theorem{hypersurface}. For fixed genus $g$, the virtual Tevelev degree of
$\bP^r$ and $Q^r$ depend upon the degree $d$ at most by a parity condition (no
dependence on $d$ at all for $\bP^r$ and mod 2 dependence for $Q^r$). Since $d$
is related to $g$ and $n$ by the dimension constraint, we can also view the
dependence as periodic in $n$. We prove in \Section{flagflag} that all
cominuscule{\footnote{The full definition is reviewed in \Section{flagflag}.}
flag varieties have such simple behavior.

\begin{thm}\label{thm:ttt333}%
  Let $X$ be a cominuscule flag variety. When the dimension constraint \eqn{bbb}
  is satisfied, $\vTev^{X}_{g,d,n}$ depends only upon $g$ and
  \[
    n \mod\ \ord(\mP)\,,
  \]
  where $\ord(\mP)$ is the finite order of the point class $\mP$ in the group of
  units of the ring $\QH(X)/\langle q-1\rangle$.
\end{thm}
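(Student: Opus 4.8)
The plan is to deduce the statement directly from $\Theorem{qhred}$ by working in the finite-dimensional $\Q$-algebra $R = \QH(X)/\langle q - 1\rangle$. Since $X$ is cominuscule, the quantum parameter $q$ is a single variable (the Picard rank is one), and a classical fact about cominuscule quantum cohomology — that $\QH(X)$ is a finitely generated free $\Q[q]$-module and that setting $q=1$ yields a semisimple algebra — gives that $R$ is a finite-dimensional commutative $\Q$-algebra. In $R$, the point class $\mP$ becomes a unit: this is again a standard cominuscule feature (concretely, $\mP^{\star k} = q^{m}\cdot 1$ for suitable $k,m$, coming from the fact that some power of the point class equals a power of $q$ times the fundamental class). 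Hence $\mP$ lies in the unit group of $R$ and has finite multiplicative order $\ord(\mP)$, so the definition in the statement makes sense.

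The second step is to rewrite the Tevelev degree intrinsically in $R$. By $\Theorem{qhred}$,
\[
  \vTev^X_{g,d,n} = \coeff\bigl(\mP^{\star n}\star\mE^{\star g},\, q^d\mP\bigr).
\]
Writing $[\,\cdot\,]_{q^d}$ for extraction of the $q^d$-coefficient and using that $\mP$ is a unit, this coefficient can be recovered from $\mP^{\star(n-1)}\star\mE^{\star g}$ by pairing against the fundamental class after clearing $\mP$; more cleanly, I would introduce the $\Q$-linear functional on $\QH(X)$ given by $\phi(\alpha) = \coeff(\alpha,\, q^d\mP) = \int_X \coeff(\alpha, q^d)$, so that $\vTev^X_{g,d,n} = \phi(\mP^{\star n}\star\mE^{\star g})$. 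The key observation is that, because of the dimension constraint $\eqn{bbb}$, the grading forces the relevant $q$-power to be determined by $g$ and $n$: the class $\mP^{\star n}\star \mE^{\star g}$ is homogeneous of a fixed (real) degree, and $\eqn{bbb}$ says precisely that the monomial $q^d\mP$ sits in that degree, so picking out the $q^d$ part is the same as picking out the top cohomological part of the image of $\mP^{\star n}\star\mE^{\star g}$ in $R$ under a fixed trace-type functional $\psi\colon R\to\Q$. Thus $\vTev^X_{g,d,n} = \psi\bigl(\bar\mP^{\,n}\cdot\bar\mE^{\,g}\bigr)$ in $R$, where bars denote images modulo $q-1$ and $\psi$ is independent of $d$.

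The final step is periodicity. Suppose $n \equiv n' \pmod{\ord(\mP)}$, say $n' = n + t\cdot\ord(\mP)$ with $t\geq 0$, and let $d'$ be the curve class paired with $(g,n')$ by $\eqn{bbb}$. Then $\bar\mP^{\,n'} = \bar\mP^{\,n}\cdot(\bar\mP^{\,\ord(\mP)})^{t} = \bar\mP^{\,n}$ in $R$, so $\psi(\bar\mP^{\,n'}\cdot\bar\mE^{\,g}) = \psi(\bar\mP^{\,n}\cdot\bar\mE^{\,g})$, giving $\vTev^X_{g,d',n'} = \vTev^X_{g,d,n}$. Since the right-hand side manifestly does not involve $d$ once expressed in $R$, this shows $\vTev^X_{g,d,n}$ depends only on $g$ and $n \bmod \ord(\mP)$.

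I expect the main obstacle to be the second step: making precise that extracting $\coeff(\,\cdot\,,q^d\mP)$ is genuinely independent of $d$ and factors through a single functional on $R = \QH(X)/\langle q-1\rangle$. This requires carefully tracking the grading on $\QH(X)$ — treating $q$ as an element of degree $\int_d c_1(T_X)$ over the minimal effective class — and invoking $\eqn{bbb}$ to see that the dependence on $d$ is entirely absorbed into powers of $q$, which trivialize modulo $q-1$. The homogeneity bookkeeping, together with verifying that $\mP$ is a unit of finite order in $R$ in the cominuscule setting (citing the known structure of cominuscule quantum cohomology), is where the real content lies; the periodicity conclusion is then formal.
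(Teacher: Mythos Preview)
Your proposal is correct and follows essentially the same route as the paper. The paper phrases the argument inside $\QH(X)_q$ rather than passing to $R=\QH(X)/\langle q-1\rangle$: it uses the Seidel class lemma (specifically the cancellation property $\coeff([X^v]\star\mA,[X^v]\star\mB)=\coeff(\mA,\mB)$) to rewrite $\vTev^X_{g,d,n}=\coeff(\mE^{\star g},q^d\mP^{\star(1-n)})$, after which periodicity in $n$ is immediate from $\mP^{\star\ord(\mP)}=q^e$ and the dimension constraint. Your ``factor through $R$ via homogeneity'' step is the same observation in different clothing --- the graded piece $\QH(X)_{q,k}$ maps isomorphically onto $R$ as a $\Q$-vector space, identifying your functional $\psi$ with the paper's coefficient extraction at the unique basis element $q^d\mP$ of the correct degree. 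One small note: the semisimplicity of $R$ that you invoke is not actually needed anywhere; only the finite-order unit property of $\mP$ (which the paper cites as \Theorem{seidel}, due to Chaput--Manivel--Perrin) is used.
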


Cominuscule flag varieties $X$ include  Grassmannians, Lagrangian Grassmannians,
and maximal orthogonal Grassmannians. In genus 0 and 1, we find complete closed
forms for the virtual Tevelev degrees of these spaces. For general $g$, we
calculate the virtual Tevelev degrees for all cominuscule flag varieties in
terms of the eigenvalues and eigenvectors of a basic operator
\[
  [\mE/\mP]_0 : \QH(X)_{q,0} \rightarrow \QH(X)_{q,0}
\]
on the degree 0 part of the localized quantum cohomology. The formula for
virtual Tevelev degrees of arbitrary genus is presented in \Theorem{vTev_comin}
of \Section{strange}.

The operator $\mE/\mP$ is defined via quantum multiplication by $\mE$ followed
by the inverse (after localization) of quantum multiplication by the point class
$\mP$, and $[\mE/\mP]_0$ is the restriction to the degree 0 part. A crucial
property of $\mE/\mP$ is symmetry with respect to the strange duality involution
of \cite{postnikov:affine, chaput.manivel.ea:quantum}.

As an example of the effectivity of our results, the virtual Tevelev degrees of
the Grassmannian $\Gr(2,5)$, the first Grassmannian which is not a projective
space or a quadric, are
\[
  \vTev^{\Gr(2,5)}_{g,d,n} =
  \frac{5 - \sqrt{5}}{10} \left( \frac{25 + 5\,\sqrt{5}}{2} \right)^g
  + \frac{5 + \sqrt{5}}{10} \left( \frac{25 - 5\,\sqrt{5}}{2} \right)^g
\]
when the dimension constraint is satisfied. For fixed genus $g$, using the
enumerativity results of \cite{lian.pandharipande:enumerativity}, the virtual
Tevelev degrees for all flag varieties in sufficiently high degree $d$ are
actual curve counts. Several further calculations are presented in
\Section{examex} and \Section{furthercfvs}

While our formulas for virtual Tevelev degrees of cominuscule flag varieties and
complete intersections look different, they share the feature that they rely
only on a small and well-behaved subring of the quantum cohomology of $X$. For
cominuscule varieties, the calculation occurs in the subring $\QH(X)_{q,0}$. For
complete intersections, the calculation is dictated to a surprising extent by
the subring defined by classes restricted from the ambient projective space.

\Theorem{qhred} implies that the virtual Tevelev degrees of all flag varieties
are non-negative integers, and \Theorem{ccii} shows that virtual Tevelev degrees
of low-degree complete intersections are non-negative integers.
\Example{vTev_neg} shows that $\vTev^X_{1,2,1} = -64$ when $X \subset \bP^7$ is
a complete intersection of three general quadrics, so virtual Tevelev degrees
can be negative ($X$ is just out of the bounds required by \Theorem{ccii}). On
the other hand, we have not been able to find a virtual Tevelev degree that is
not an integer. It would be interesting to know if non-integer virtual Tevelev
degrees exist.

\subsection{Acknowledgments}

We thank Alessio Cela, Carl Lian, Gavril Farkas, Dhruv Ranganathan, and
Johannes Schmitt for many
discussions about Tevelev degrees, and Pierre-Emmanuel Chaput, Leonardo
Mihalcea, Nicolas Perrin, and Weihong Xu for inspiring discussions about
cominuscule quantum cohomology. A.B. was partially supported by ICERM and is
grateful for the stimulating environment provided while he participated in
ICERM's semester program on Combinatorial Algebraic Geometry in the Spring of
2021. R.P. was supported by SNF-200020-182181, ERC-2017-AdG-786580-MACI, and
SwissMAP. This project has received funding from the European Research Council
(ERC) under the European Union Horizon 2020 research and innovation program
(grant agreement No 786580).


\section{Reduction to quantum cohomology}\label{sec:proof}

\subsection{Proof of \Theorem{qhred}}

Let $X$ be a nonsingular complex projective variety of dimension $r$, and let $d
\in H_2(X,\Z)$. Let $g,n,k \geq 0$ be non-negative integers satisfying $2g-2+n+k
> 0$. If the dimension constraint \eqn{bbb} is satisfied, we have by definition
\eqn{gvvt},
\[
  \vTev^X_{g,d,n,k} \cdot [\barM_{g,n+k}\times X^n]
  = \tau_* \left([\barM_{g,n+k}(X,d)]^\vir\right)
  \in A^0(\barM_{g,n+k}\times X^n) \,,
\]
where $\tau=(\pi, \ev_{[n]})$ is the morphism
\[
  \tau : \barM_{g,n+k}(X,d)\to\barM_{g,n+k}\times X^n
\]
constructed from the factors
\[
  \pi : \barM_{g,n+k}(X,d)\to\barM_{g,n+k}
  \ \ \ \ \ \text{and} \ \ \ \ \
  \ev_{[n]} : \barM_{g,n+k}(X,d)\to X^n \,,
\]
where $\ev_{[n]}$ denotes evaluation at the first $n$ marked points.

Let $I^X_{g,d,n+k} : H^*(X)^{\otimes (n+k)} \to H^*(\barM_{g,n+k})$ denote the
Gromov-Witten class defined by
\[
  I^X_{g,d,n+k}(\al) =
  \pi_*\left( \ev_{[n+k]}^*(\al) \cap [\Mb_{g,n+k}(X,d)]^\vir \right) \,.
\]
By the projection formula, we obtain
\[
  \vTev^X_{g,d,n,k} \cdot [\barM_{g,n+k}] =
  I^X_{g,d,n+k}(\mP^{\otimes n} \otimes 1^{\otimes k}) \,,
\]
where $\mP \in H^{2r}(X)$ is the point class. By the identity insertion axiom in
Gromov-Witten theory, $\vTev^X_{g,d,n,k}$ is independent of $k$ (whenever
$2g-2+n+k>0$). Moreover, $\vTev^X_{g,d,n} = \vTev^X_{g,d,n,k}$ is given by
\begin{equation}\label{eqn:vTev}%
  \vTev^X_{g,d,n} =
  \int_{\Mb_{g,n+k}} I^X_{g,d,n+k}(\mP^{\otimes n} \otimes 1^{\otimes k})
  \cap \mP_{\Mb_{g,n+k}} \,,
\end{equation}
whether or not the dimension constraint \eqn{bbb} is satisfied. Here,
$\mP_{\Mb_{g,n+k}}$ denotes the point class in $H^*(\Mb_{g,n+k})$.

By the genus reduction axiom of Gromov-Witten theory (see
\cite{behrend:gromov-witten}) we have
\[
  \psi^* I^X_{g,d,n+k}(\mP^{\otimes n} \otimes 1^{\otimes k}) =
  I^X_{0,d,n+k+2g}(\mP^{\otimes n} \otimes 1^{\otimes k} \otimes
  \Delta^{\otimes g}) \,,
\]
where $\Delta \in H^*(X \times X)$ is the diagonal class and $\psi :
\barM_{0,n+k+2g} \to \barM_{g,n+k}$ sends the $(n+k+2g)$-pointed rational curve
$[\bP^1, p_1,p_2,\dots,p_{n+k}, z_1,z'_1,\dots,z_g,z'_g]$ to the $(n+k)$-pointed
curve with $g$ nodes obtained by identifying $z_i$ with $z'_i$ for $1 \leq i
\leq g$:
\[
\begin{tikzpicture}[x=10pt,y=10pt]
  \coordinate (o) at (0,0);
  \coordinate (c0) at (1,0);
  \coordinate (m01) at (2.4,1.5);
  \coordinate (c1) at (0,.9);
  \coordinate (m12) at (-1,1);
  \coordinate (c2) at (-.2,0);
  \coordinate (pm) at (2.5,0);
  \def\mypoint#1#2{node[circle,fill,inner sep=1.5pt,label={below:#2}] at #1 {}}
  \def\mynode{
  .. controls +(c0) and +($ (o)-(c1) $) .. ++(m01)
  .. controls +(c1) and +($ (o)-(c2) $) .. ++(m12)
  .. controls +(c2) and +($ (c1-|o) - (c1|-o) $) .. ++($ (m12|-o) - (m12-|o) $)
  .. controls +($ (c1|-o) - (c1-|o) $) and +($ (c0-|o) - (c0|-o) $)
  .. ++($ (m01|-o) - (m01-|o) $)
  }
  \draw[thick] (-.5,0) -- ++(16,0) \mynode\mynode\mynode\mynode -- ++(2.5,0)
  \mypoint{(2.5,0)}{$p_1$}
  \mypoint{(5,0)}{$p_2$}
  \mypoint{(7.5,0)}{}
  \mypoint{(10,0)}{}
  \mypoint{(12.5,0)}{$p_{n+k}$}
  node[label={below:$\cdots$}] at (8.75,0) {};
  \draw [decorate,decoration={brace,mirror,amplitude=7pt}]
  (16,-1) -- ++(10,0) node[midway,yshift=-15pt] {$g$};
\end{tikzpicture}
\]

\noindent
We therefore obtain from \eqn{vTev} that
\begin{equation}\label{eqn:v33p}
  \vTev^X_{g,d,n} = \langle \mP^{\otimes n} \otimes 1^{\otimes k} \otimes
  \Delta^{\otimes g} \rangle^X_{\odot,d,n+k+2g} \,,
\end{equation}
where, for each $m \geq 3$, we let $\langle - \rangle^X_{\odot,d,m} :
H^*(X)^{\otimes m} \to \Q$ be the map defined by
\[
  \langle \al \rangle^X_{\odot,d,m} =
  \int_{\barM_{0,m}} I^X_{0,d,m}(\al) \cap \mP_{\Mb_{0,m}} \,.
\]
The invariants $\langle \al \rangle^X_{\odot,d,m}$ should be considered
\emph{small} $m$-pointed Gromov-Witten invariants by the following Proposition
(which played a central role in Bertram's computation of the quantum cohomology
of Grassmannians \cite{bertram:quantum}).

\begin{prop}\label{prop:bertram}
  Let $m \geq 2$. Then, the $m$-fold small quantum multiplication map
  \[
    H^*(X)^{\otimes m} \to \QH(X)
  \]
  is given by
  \[
    \alpha \ \mapsto \ \sum_{d,k} \langle \al \otimes \ga_k^\vee
    \rangle^X_{\odot,d,m+1} \cdot q^d \ga_k \,.
  \]
\end{prop}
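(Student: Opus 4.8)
The plan is to prove the Proposition by induction on $m$, where the case $m=2$ is essentially the definition of the quantum product and the inductive step is supplied by the splitting axiom of Gromov--Witten theory. For the base case $m=2$, the $2$-fold quantum multiplication map sends $\gamma_i \otimes \gamma_j$ to $\gamma_i \star \gamma_j = \sum_{d,k} \langle \gamma_i, \gamma_j, \gamma_k^\vee \rangle^X_{0,d}\, q^d \gamma_k$ by definition, so it is enough to observe that $\langle \gamma_i \otimes \gamma_j \otimes \gamma_k^\vee \rangle^X_{\odot,d,3}$ coincides with the ordinary genus-$0$ three-point invariant $\langle \gamma_i, \gamma_j, \gamma_k^\vee \rangle^X_{0,d}$; this is immediate because $\barM_{0,3}$ is a point, so $\mP_{\barM_{0,3}} = 1$ and $I^X_{0,d,3}(\gamma_i \otimes \gamma_j \otimes \gamma_k^\vee)$ is by definition the number $\int_{[\barM_{0,3}(X,d)]^\vir} \ev^*(\gamma_i \otimes \gamma_j \otimes \gamma_k^\vee)$.

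For the inductive step, fix $m \geq 3$ and assume the formula for products of at most $m-1$ factors. Using associativity of $\star$ together with the base case,
\[
  \gamma_{i_1} \star \cdots \star \gamma_{i_m}
  = \big( \gamma_{i_1} \star \cdots \star \gamma_{i_{m-1}} \big) \star \gamma_{i_m}
  = \sum_{d_1, l} \langle \gamma_{i_1} \otimes \cdots \otimes \gamma_{i_{m-1}} \otimes \gamma_l^\vee \rangle^X_{\odot, d_1, m}\, q^{d_1}\, \big( \gamma_l \star \gamma_{i_m} \big),
\]
so the coefficient of $q^d \gamma_k$ in $\gamma_{i_1} \star \cdots \star \gamma_{i_m}$ equals
\[
  \sum_{d_1 + d_2 = d}\ \sum_l\ \langle \gamma_{i_1} \otimes \cdots \otimes \gamma_{i_{m-1}} \otimes \gamma_l^\vee \rangle^X_{\odot, d_1, m} \cdot \langle \gamma_l, \gamma_{i_m}, \gamma_k^\vee \rangle^X_{0, d_2} \,.
\]
It therefore remains to show that this sum equals $\langle \gamma_{i_1} \otimes \cdots \otimes \gamma_{i_m} \otimes \gamma_k^\vee \rangle^X_{\odot, d, m+1}$.

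To establish this identity I would proceed as follows. Since $\barM_{0,m+1}$ is connected and projective, the class of a point in $H^*(\barM_{0,m+1})$ does not depend on the point chosen, so I take a point lying on the boundary divisor $\iota \colon D \hookrightarrow \barM_{0,m+1}$ that separates the marks carrying $\gamma_{i_1}, \dots, \gamma_{i_{m-1}}$ from the two marks carrying $\gamma_{i_m}$ and $\gamma_k^\vee$. Under the canonical isomorphism $D \cong \barM_{0,m} \times \barM_{0,3} \cong \barM_{0,m}$ this gives $\mP_{\barM_{0,m+1}} = \iota_* \mP_{\barM_{0,m}}$, so the projection formula turns the $(m+1)$-pointed invariant into $\int_{\barM_{0,m}} \iota^* I^X_{0,d,m+1}(\gamma_{i_1} \otimes \cdots \otimes \gamma_{i_m} \otimes \gamma_k^\vee) \cup \mP_{\barM_{0,m}}$. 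The splitting (cohomological field theory) axiom of Gromov--Witten theory, as in \cite{behrend:gromov-witten}, identifies $\iota^* I^X_{0,d,m+1}$ on $\barM_{0,m} \times \barM_{0,3}$ with $\sum_{d_1 + d_2 = d} \sum_l I^X_{0,d_1,m}(\gamma_{i_1} \otimes \cdots \otimes \gamma_{i_{m-1}} \otimes \gamma_l) \boxtimes I^X_{0,d_2,3}(\gamma_l^\vee \otimes \gamma_{i_m} \otimes \gamma_k^\vee)$; since $\barM_{0,3}$ is a point, the second factor is just the number $\langle \gamma_l^\vee, \gamma_{i_m}, \gamma_k^\vee \rangle^X_{0,d_2}$. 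Integrating the first factor against $\mP_{\barM_{0,m}}$ and re-indexing the basis sum via $\gamma_l \leftrightarrow \gamma_l^\vee$ (legitimate because $\{\gamma_l^\vee\}$ is the basis dual to $\{\gamma_l\}$) recovers precisely the sum above, closing the induction.

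The main obstacle is the last step. One must apply the splitting axiom with the correct K\"unneth placement of the diagonal class (cf.\ the ``order matters'' footnote) and keep track of the interchange of $\gamma_l$ and $\gamma_l^\vee$ across the node, which matters once $X$ carries odd cohomology; the substantive input is that the virtual fundamental class of $\barM_{0,m+1}(X,d)$ restricts compatibly under the boundary inclusion $\iota$, which is exactly the content of that axiom. A secondary point to verify is that the identification $D \cong \barM_{0,m} \times \barM_{0,3}$ matches up the marked points so that the evaluation insertions land where claimed; this, together with the independence of the point class from the chosen point, is what makes the reduction to $\barM_{0,m}$ legitimate.
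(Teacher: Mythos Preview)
Your proof is correct and follows essentially the same route as the paper: induction on $m$, with the base case $m=2$ being the definition of the small quantum product and the inductive step supplied by the splitting axiom applied to a boundary divisor separating one factor from the rest. The paper states the resulting splitting identity for $\langle - \rangle^X_{\odot,d,m+1}$ in one line and then invokes the induction hypothesis, whereas you spell out the geometry (choice of boundary point, projection formula, K\"unneth placement of the diagonal) and flag the basis re-indexing; but the logical content is the same.
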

\begin{proof}
  The identity for $m=2$ is the definition of the small quantum product. We
  proceed by induction on $m$.

  Given $\al_1 \in H^*(X)^{\otimes m}$ and $\al_2 \in H^*(X)$, the splitting
  axiom of Gromov-Witten theory implies
  \[
    \langle \al_1 \otimes \al_2 \otimes \gamma_k^\vee \rangle^X_{\odot,d,m+2}
    = \sum_{d_1+d_2=d} \sum_{k'} \
    \langle \al_1 \otimes \ga_{k'}^\vee \rangle^X_{\odot,d_1,m+1} \cdot
    \langle \ga_{k'} \otimes \al_2\otimes \gamma_k^\vee
    \rangle^X_{\odot,d_2,3} \ .
  \]
  By using the induction hypothesis, we see that the map of the Proposition is
  given by iterating the small quantum product.
\end{proof}

By equation \eqn{v33p} for the virtual Tevelev degree and \Proposition{bertram},
we obtain
\[
  \vTev^X_{g,d,n}
  = \langle \mP^{\otimes n} \otimes 1^{\otimes k} \otimes \Delta^{\otimes g}
    \rangle^X_{\odot,d,n+k+2g}
  = \coeff(\mP^{\star n} \star \mE^{\star g}, q^d \mP) \,,
\]
which completes the proof of \Theorem{qhred}. \qed

\begin{example}
  We illustrate \Theorem{qhred} by computing the virtual Tevelev degrees of
  projective space $X = \bP^r$. The small quantum cohomology ring is given by
  \[
    \QH(\bP^r) = \Q[\mH,q] / \langle \mH^{r+1} - q \rangle \,,
  \]
  and we have
  \[
    \mP = \mH^r \ \ \ \ \text{and} \ \ \ \ \mE = (r+1)\, \mH^r \,.
  \]
  We identify $H_2(\bP^r,\Z)$ with $\Z$. The dimension constraint \eqn{bbb} on
  $g, d, n \geq 0$ is then
  \[
    (r + 1) d = r (n + g - 1) \,.
  \]
  When satisfied, we have
  \[
    \vTev^{\bP^r}_{g,d,n} =
    \coeff(\mP^{\star n} \star \mE^{\star g}, q^d \mP) = (r+1)^g \,.
  \]
\end{example}

\subsection{Product rule}\label{sec:prodrule}%

The virtual Tevelev degrees of a product of varieties are obtained from the
virtual Tevelev degrees of the factors as follows.

\begin{prop}\label{prop:prodrule}%
  Let $X$ and $Y$ be nonsingular projective varieties. For $g,n \geq 0$, $d_X
  \in H_2(X,\Z)$, and $d_Y \in H_2(Y,\Z)$ we have
  \[
    \vTev^{X \times Y}_{g,(d_X,d_Y),n} =
    \vTev^X_{g,d_X,n} \cdot \vTev^Y_{g,d_Y,n} \,.
  \]
\end{prop}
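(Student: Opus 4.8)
The plan is to deduce the product rule directly from \Theorem{qhred}, reducing everything to a statement about quantum cohomology rings. The key algebraic input is the Künneth-type isomorphism of quantum cohomology rings $\QH(X \times Y) \cong \QH(X) \otimes_{\Q} \QH(Y)$ (with $q$-variables $q_X, q_Y$ recording classes in $H_2(X,\Z)$ and $H_2(Y,\Z)$ separately), which follows from the product formula for genus-$0$ three-point Gromov-Witten invariants: a stable map to $X \times Y$ is a pair of stable maps with the same domain, and the virtual class splits. Under this isomorphism, the point class satisfies $\mP_{X\times Y} = \mP_X \otimes \mP_Y$, and I claim the quantum Euler class satisfies $\mE_{X\times Y} = \mE_X \otimes \mE_Y$. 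For the latter: if $\{\gamma_i\}$ is a basis of $H^*(X)$ and $\{\delta_j\}$ a basis of $H^*(Y)$, then $\{\gamma_i \otimes \delta_j\}$ is a basis of $H^*(X\times Y)$ with dual basis $\{\gamma_i^\vee \otimes \delta_j^\vee\}$ (since integration over $X\times Y$ factors), so
\[
  \mE_{X\times Y} = \sum_{i,j} (\gamma_i^\vee \otimes \delta_j^\vee) \star (\gamma_i \otimes \delta_j)
  = \Bigl(\sum_i \gamma_i^\vee \star \gamma_i\Bigr) \otimes \Bigl(\sum_j \delta_j^\vee \star \delta_j\Bigr)
  = \mE_X \otimes \mE_Y \,,
\]
using that the quantum product on $\QH(X)\otimes\QH(Y)$ is the tensor product of the two quantum products.

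With these identifications in hand, the computation is immediate. Since $(\mA \otimes \mB)^{\star m} = \mA^{\star m} \otimes \mB^{\star m}$ in a tensor product of rings, we get
\[
  \mP_{X\times Y}^{\star n} \star \mE_{X\times Y}^{\star g}
  = (\mP_X^{\star n} \star \mE_X^{\star g}) \otimes (\mP_Y^{\star n} \star \mE_Y^{\star g}) \,.
\]
Extracting the coefficient of $q^{(d_X,d_Y)} \mP_{X\times Y} = q_X^{d_X} q_Y^{d_Y}\, (\mP_X \otimes \mP_Y)$ then factors as a product of the coefficient of $q_X^{d_X}\mP_X$ in the first tensor factor and the coefficient of $q_Y^{d_Y}\mP_Y$ in the second. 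By \Theorem{qhred} applied three times — once to $X\times Y$, once to $X$, once to $Y$ — this yields exactly $\vTev^{X\times Y}_{g,(d_X,d_Y),n} = \vTev^X_{g,d_X,n} \cdot \vTev^Y_{g,d_Y,n}$. One should also note the dimension-constraint bookkeeping: \eqn{bbb} for $X \times Y$ with class $(d_X, d_Y)$ holds if and only if it holds for $X$ with $d_X$ \emph{and} for $Y$ with $d_Y$ simultaneously — wait, this is not automatic, so care is needed. When the constraint fails for $X\times Y$, the left side vanishes by \Definition{mdef}; one checks the right side vanishes too, because failure of \eqn{bbb} for the product forces failure for at least one factor (the constraint for the product is the sum of the two factor constraints), making the corresponding $\vTev$ factor zero by definition.

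The main obstacle is justifying the quantum Künneth isomorphism $\QH(X\times Y)\cong\QH(X)\otimes\QH(Y)$ as rings at the level of rigor the paper wants — specifically that the genus-$0$ three-point invariants of $X\times Y$ in class $(d_X,d_Y)$ equal the product of those of $X$ in class $d_X$ and $Y$ in class $d_Y$. This is standard (it follows from the fact that $\barM_{0,3}(X\times Y,(d_X,d_Y))$ maps to the fiber product of $\barM_{0,3}(X,d_X)$ and $\barM_{0,3}(Y,d_Y)$ over $\barM_{0,3}$, together with compatibility of virtual classes and evaluation maps), so I expect the write-up to cite it rather than reprove it; everything else is a short formal manipulation. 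An alternative, more self-contained route avoiding an explicit ring isomorphism would be to run the product formula directly on the Gromov-Witten classes $I^{X\times Y}_{g,d,n+k}$ appearing in \eqn{vTev} — stable maps to $X\times Y$ are pairs of stable maps with common domain, the virtual class and evaluation maps split, and pushing forward to $\barM_{g,n+k}$ then capping with the point class gives the product — but packaging this through \Theorem{qhred} is cleaner and is the approach I would write up.
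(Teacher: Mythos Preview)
Your argument is correct, but the paper takes the more direct route that you describe at the end as an ``alternative'': it applies Behrend's product formula \cite{behrend:product} directly to the genus-$g$ Gromov-Witten class, obtaining
\[
  I^{X\times Y}_{g,(d_X,d_Y),n+k}\bigl((\mP_X\otimes\mP_Y)^{\otimes n}\otimes 1^{\otimes k}\bigr)
  = I^X_{g,d_X,n+k}(\mP_X^{\otimes n}\otimes 1^{\otimes k})\cdot
    I^Y_{g,d_Y,n+k}(\mP_Y^{\otimes n}\otimes 1^{\otimes k})
\]
in $H^*(\barM_{g,n+k})$, and then invokes \eqn{vTev}. Your route through \Theorem{qhred} and the quantum K\"unneth isomorphism is also valid, but it is less direct: you first reduce to genus~$0$ via \Theorem{qhred}, then invoke (implicitly) the genus-$0$ instance of the same product formula to get $\QH(X\times Y)\cong\QH(X)\otimes\QH(Y)$, then reassemble. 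Both arguments rest on Behrend's product formula; the paper's version simply uses it once at the correct genus rather than detouring through the quantum ring.

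One remark on your dimension-constraint discussion: it is unnecessary, and you only treated half of it. The interesting case (which the paper notes just after the proof) is when the constraint \eqn{bbb} \emph{holds} for $X\times Y$ but fails for one or both factors; the sum of the two factor equations can hold without either holding individually. In your approach this is handled automatically by degree reasons in $\QH$ --- \Theorem{qhred} is valid without any dimension hypothesis, and the coefficient of $q^{d_X}\mP_X$ in $\mP_X^{\star n}\star\mE_X^{\star g}$ vanishes for grading reasons whenever the $X$-constraint fails --- so you can simply drop that paragraph.
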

\begin{proof}
  The result is a direct consequence of the product rule in Gromov-Witten theory
  \cite{behrend:product}. Choose $k \geq 0$ such that $2g-2+n+k > 0$. Let $\mP_X
  \in H^*(X)$ and $\mP_Y \in H^*(Y)$ be the point classes. Then $\mP_X \otimes
  \mP_Y$ is the point class of $X \times Y$. By the product rule we have
  \[
    I^{X\times Y}_{g,(d_X,d_Y),n+k} \left(
    (\mP_X\otimes \mP_Y)^{\otimes n} \otimes 1^{\otimes k} \right) =
    I^X_{g,d_X,n+k}(\mP_X^{\otimes n} \otimes 1^{\otimes k})\cdot
    I^Y_{g,d_Y,n+k}(\mP_Y^{\otimes n} \otimes 1^{\otimes k})
    \]
    in $H^*(\barM_{g,n+k})$. The claim therefore follows from \eqn{vTev}.
\end{proof}

The dimension constraint \eqn{bbb} may be satisfied for
$\vTev^{X\times Y}_{g,(d_X,d_Y),n}$, but fail for $\vTev^X_{g,d_X,n}$ and
$\vTev^Y_{g,d_Y,n}$. Then,  all three invariants are zero.

\begin{example}
  The virtual Tevelev degrees of the quadric surface $Q^2 \subset \bP^3$ are
  easily determined by \Proposition{prodrule} since $Q^2 \cong \bP^1\times
  \bP^1$. If $2 d_1 = 2 d_2 = n+g-1$, then
  \[
    \vTev^{Q^2}_{g,(d_1,d_2),n} = 4^g \,,
  \]
  where $(d_1,d_2)$ denotes the bidegree of the curve class. In all other cases,
  the virtual Tevelev degree vanishes.
\end{example}


\section{Flag varieties}\label{sec:flagflag}

\subsection{Schubert varieties}

Let $G$ be a connected semi-simple linear algebraic group over $\C$ and fix a
maximal torus $T$ and a Borel subgroup $B$ such that $T \subset B \subset G$.
The opposite Borel subgroup $B^- \subset G$ is defined by $B \cap B^- = T$. Let
$W = N_G(T)/T$ be the Weyl group of $G$, and let $\Phi$ be the root system, with
positive roots $\Phi^+$ and simple roots $\Phi^S$. Each root $\al \in \Phi$
defines a reflection $s_\al \in W$.

A flag variety of $G$ is a projective variety $X$ with a transitive $G$-action.
Every flag variety $X$ contains a unique $B$-stable point $\mpB \in X$. We let
$P_X \subset G$ denote the parabolic subgroup stabilizing $\mpB$. We then have
the identifications
\[
  X = G/P_X = \{ g.P_X \mid g \in G \}\,, \ \  \ \mpB = 1.P_X \,.
\]

Let $W_X = N_{P_X}(T)/T$ denote the Weyl group of $P_X$, and let $W^X \subset W$
be the set of minimal length representatives of the cosets in $W/W_X$. Every
element $u \in W$ defines a $B$-stable Schubert variety $X_u = \ov{Bu.P_X}$ and
an (opposite) $B^-$-stable Schubert variety $X^u = \ov{B^-u.P_X}$. For $u \in
W^X$, we have
\[
  \dim(X_u) = \codim(X^u,X) = \ell(u) \,.
\]
The set of Schubert classes $\{[X^u] \mid u \in W^X\}$ is a basis of the
cohomology ring $H^*(X)$. The dual basis is $\{[X_u] \mid u \in W^X\}$, in the
sense that $\int_X [X_u] \cdot [X^v] = \delta_{u,v}$. Let $w_0^X$ denote the
longest element in $W^X$. Then $[X^{w_0^X}]$ is the class of a point in $X$, and
$\ell(w_0^X) = \dim(X)$. When no confusion is possible, we will also write $\mP
= [X^{w_0^X}]$ for the class of a point.

\subsection{Quantum cohomology}

Let $\QH(X)_q$ denote the localized small quantum cohomology ring of $X$. This
ring is an algebra over the Laurent polynomial ring
\[
  \Q[q^{\pm 1}] = \Q[q_\al^{\pm 1} \mid \al \in \Phi^S \ssm \Phi_X] \,,
\]
where $\Phi_X \subset \Phi$ is the root system of $P_X$.\footnote{The quantum
cohomology ring of a flag variety can also be defined with $\Z$-coefficients.}
We have $\QH(X)_q = H^*(X) \otimes_\Q \Q[q^{\pm 1}]$ as a $\Q[q^{\pm
1}]$-module, and multiplication in $\QH(X)_q$ is defined by
\[
  [X^u] \star [X^v] = \sum_{w,d} \gw{[X^u], [X^v], [X_w]}{d} \,
  q^d\, [X^w] \,,
\]
where the sum is over all $w \in W^X$ and effective degrees $d \in H_2(X,\Z)$.
Here, we use the notation $q^d = \prod q_\al^{d_\al}$ for each $d \in
H_2(X,\Z)$, where $d_\al = \int_{X_{s_\al}} d$. The invariants
$\gw{[X^u],[X^v],[X_w]}{d}$ are enumerative, in particular they are non-negative
integers (see \cite{fulton.pandharipande:notes}). By
\cite[Thm.~9.1]{fulton.woodward:quantum},  every quantum product $[X^u] \star
[X^v]$ of Schubert classes is non-zero. The quantum Euler class of a
flag variety is naturally expressed using the Schubert basis:
\begin{equation}\label{eqn:intint}
  \mE = \sum_{u \in W^X} [X_u] \star [X^u] \,.
\end{equation}

The ring $\QH(X)_q$ has the $\Q$-basis
\[
  \cB = \{ q^d [X^u] \mid d \in H_2(X) \text{ and } u \in W^X \} \,.
\]
For every $\mA \in \QH(X)$ and basis element $q^d [X^u] \in \cB$, let
\[
  \coeff(\mA, q^d [X^u])
\]
denote the coefficient of $q^d [X^u]$ when $\mA$ is expanded in the basis $\cB$.
The usual (small) quantum cohomology ring $\QH(X)$ is the subring $H^*(X)
\otimes_\Q \Q[q] \subset \QH(X)_q$ spanned by all elements $q^d [X^u]$ for which
$d \in H_2(X,\Z)$ is an effective degree.

\begin{prop}\label{prop:flagINT}%
  For any flag variety $X$, we have $\vTev^{X}_{g,d,n} \in \Z_{\geq 0}$ for all
  $g,n \geq 0$ and $d\in H_2(X,\Z)$.
\end{prop}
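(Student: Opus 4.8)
The plan is to combine \Theorem{qhred} with the positivity and integrality properties of the Schubert structure constants of $\QH(X)$. By \Theorem{qhred}, we have
\[
  \vTev^X_{g,d,n} = \coeff(\mP^{\star n} \star \mE^{\star g}, q^d \mP)\,,
\]
so it suffices to show that when $\mP^{\star n} \star \mE^{\star g}$ is expanded in the basis $\cB$, every coefficient is a non-negative integer. First I would record that $\mP = [X^{w_0^X}]$ is a single Schubert class, and that, by \eqn{intint}, $\mE = \sum_{u \in W^X} [X_u] \star [X^u]$ is a $\Z_{\geq 0}$-linear combination of products of Schubert classes. More precisely, expanding $\mE$ in the Schubert basis gives $\mE = \sum_{w,d} e_{w,d}\, q^d [X^w]$ with $e_{w,d} = \sum_{u \in W^X} \gw{[X_u],[X^u],[X_w]}{d} \in \Z_{\geq 0}$, since each Gromov-Witten invariant appearing is enumerative and hence a non-negative integer.

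The key step is then the observation that the entire expression $\mP^{\star n} \star \mE^{\star g}$ is obtained by iterated quantum multiplication of Schubert classes, and that each such multiplication preserves the cone of elements with non-negative integer coefficients in the Schubert basis $\{q^d [X^w]\}$. Indeed, for Schubert classes $[X^u], [X^v]$ we have $[X^u] \star [X^v] = \sum_{w,d} \gw{[X^u],[X^v],[X_w]}{d}\, q^d [X^w]$ with all structure constants in $\Z_{\geq 0}$; by bilinearity this extends to arbitrary $\Z_{\geq 0}$-linear combinations of the $q^d[X^w]$. Writing $\mE^{\star g}$ as a $g$-fold quantum product of the non-negative-integer element $\mE$, and then multiplying on the left by $\mP^{\star n}$ (a product of $n$ copies of the Schubert class $\mP$), we conclude by induction on $n+g$ that $\mP^{\star n} \star \mE^{\star g}$ has all coefficients in $\Z_{\geq 0}$ when expanded in $\cB$. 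In particular $\coeff(\mP^{\star n} \star \mE^{\star g}, q^d \mP) \in \Z_{\geq 0}$, which is the claim. The cases where the dimension constraint fails are handled by \Definition{mdef}, which sets the invariant to $0 \in \Z_{\geq 0}$.

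I do not expect a serious obstacle here: the only input beyond \Theorem{qhred} is the well-known non-negativity and integrality of the quantum Schubert structure constants $\gw{[X^u],[X^v],[X_w]}{d}$, cited in the excerpt to \cite{fulton.pandharipande:notes}, together with the fact (also already noted, via \cite{fulton.woodward:quantum}) that everything in sight is genuinely a Schubert-basis expansion with finitely many terms in each degree, so the sums converge and the inductive bookkeeping is routine. The one point worth stating carefully is that $\mE$ itself must first be rewritten in the Schubert basis with non-negative integer coefficients before the induction begins; once that is in place, closure of the positive-integral cone under quantum multiplication finishes the argument.
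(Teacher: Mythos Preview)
Your proof is correct and follows essentially the same approach as the paper: both arguments use \Theorem{qhred} to identify $\vTev^X_{g,d,n}$ as a coefficient of $\mP^{\star n} \star \mE^{\star g}$ in the Schubert basis $\cB$, and then invoke the non-negativity and integrality of the quantum Schubert structure constants (together with the expression \eqn{intint} for $\mE$) to conclude that this element lies in the $\Z_{\geq 0}$-span of $\cB$. Your version is more detailed---explicitly expanding $\mE$ in the basis and inducting on $n+g$---but the underlying idea is identical.
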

\begin{proof}
  Using that the quantum product $[X^u] \star [X^v]$ of any two Schubert classes
  is a non-negative integer combination of the basis $\cB$, it follows from
  \eqn{intint} that $\mP^{\star n} \star \mE^{\star g}$ is a non-negative
  integer combination of $\cB$. The claim follows since $\vTev^X_{g,d,n}$ is one
  of the coefficients in the expansion of $\mP^{\star n} \star \mE^{\star g}$ by
  \Theorem{qhred}.
\end{proof}

\subsection{Seidel classes}

Let $\QH(X)/\langle q-1 \rangle$ denote the quotient of $\QH(X)$ by the ideal
generated by $q_\al-1$ for $\al \in \Phi^S \ssm \Phi_X$.

\begin{defn}
  A Schubert class $[X^v]$ is called a \emph{Seidel class} if
  \[
    [X^v]^{\star p} = q^d
  \]
  for some integer $p \geq 1$ and $d \in H_2(X)$. A Seidel class  $[X^v]$ is an
  element of finite order in the group of units $(\QH(X)/\langle q-1
  \rangle)^\times$. Let $\ord([X^v])$ denote the order of a Seidel class.
\end{defn}

\begin{lemma}
  Let $[X^v]$ be a Seidel class of $X$. We have the following
  properties:\smallskip

  \noin{\rm(a)} $[X^v] \star [X^u] \in \cB$ for all $u \in W^X$,\smallskip

  \noin{\rm(b)} $[X_v] \star [X^v] = \mP$, the point class of $X$,\smallskip

  \noin{\em(c)} $[X^v]$ is invertible in $\QH(X)_q$ with inverse
    $[X^v]^{\star(-1)} \in \cB$,\smallskip

  \noin{\em(d)} $\coeff([X^v]\star \mA, [X^v]\star \mB) =
    \coeff(\mA, \mB)$ for all $\mA \in \QH(X)_q$ and $\mB \in \cB$.
\end{lemma}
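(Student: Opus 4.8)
The plan is to exploit the defining relation $[X^v]^{\star p} = q^d$ of a Seidel class together with the fact, cited from \cite{fulton.woodward:quantum}, that every quantum product of Schubert classes is nonzero, and deduce the four properties in order, since they build on one another.

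First I would establish (a). Because $[X^v]^{\star p} = q^d$ and the basis $\cB$ is permuted (up to $q$-powers) only if quantum multiplication by $[X^v]$ sends basis elements to basis elements, the key observation is a dimension/degree count: expand $[X^v] \star [X^u] = \sum_{w,e} c_{w,e}\, q^e [X^w]$ with nonnegative integer coefficients $c_{w,e}$. Iterating $p$ times gives $[X^v]^{\star p} \star [X^u] = q^d [X^u]$, i.e.\ multiplication by $[X^v]$ is invertible on the $\Q$-span of $\cB$ with inverse $q^{-d}[X^v]^{\star(p-1)}\star(-)$, which also has nonnegative integer structure constants after clearing the $q^{-d}$. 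A linear map on a lattice that both it and its inverse have nonnegative integer matrices in the basis $\cB$ must be a permutation of $\cB$ (each basis vector maps to a single basis vector with coefficient $1$, since otherwise norms would grow). Applying this to the basis vector $[X^u] = q^0[X^u]$ yields $[X^v]\star[X^u] \in \cB$. I would phrase this cleanly via: the $\Q$-linear endomorphism of $\QH(X)_q$ given by $\star[X^v]$ restricted to $\cB$ is a bijection $\cB \to \cB$ (using nonnegativity of structure constants for both $[X^v]$ and its inverse $q^{-d}[X^v]^{\star(p-1)}$, and the nonvanishing of all Schubert quantum products to rule out a basis element mapping to $0$).

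Next, (c) is essentially already in hand: $[X^v]^{\star(-1)} = q^{-d}\,[X^v]^{\star(p-1)}$, and by (a) applied $p-1$ times this is a single element of $\cB$ (the $q$-shift by $q^{-d}$ keeps it in $\cB$, which allows all of $H_2(X)$, not just effective classes). For (b), I would use that multiplication by $[X^v]$ permutes $\cB$, hence in particular is a ring automorphism-like bijection preserving the pairing structure; more directly, $[X_v]\star[X^v]$ is the image of the top-degree basis element under the permutation from (a)'s companion map $\star[X^v]$, and a degree bookkeeping argument (the classical leading term of $[X_v]\star[X^v]$ is $[X_v]\cup[X^v] = \mP$ plus strictly positive $q$-corrections, but the product must be a single element of $\cB$ by (a)) forces $[X_v]\star[X^v] = \mP$ exactly, with no $q$-correction. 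I expect this degree argument to be the main subtlety: one must check that the unique monomial $q^e[X^w]$ equal to $[X_v]\star[X^v]$ is forced to be $q^0\,\mP$ by comparing the $q=0$ specialization, using $[X_v]\cup[X^v]=\delta$-type orthogonality and that $\ell(v)+\ell(w_0^Xv^{-1}w_0^X)$ or the appropriate complementary-dimension count lands exactly at $\dim X$.

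Finally, (d) follows formally from (a) and (c). Since $\mB \mapsto [X^v]\star\mB$ is a bijection $\cB \to \cB$ by (a), with inverse also acting as a bijection on $\cB$ by (c), the coefficient extraction satisfies $\coeff(\mathsf{A}, \mathsf{B}) = \coeff([X^v]\star\mathsf{A}, [X^v]\star\mathsf{B})$ for $\mathsf{B} \in \cB$: write $\mathsf{A} = \sum_{\mathsf{B}' \in \cB} \coeff(\mathsf{A},\mathsf{B}')\,\mathsf{B}'$, apply $[X^v]\star(-)$, and note the images $[X^v]\star\mathsf{B}'$ are distinct elements of $\cB$, so reading off the coefficient of $[X^v]\star\mathsf{B}$ on the left returns $\coeff(\mathsf{A},\mathsf{B})$. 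The only point needing care is that this uses $\QH(X)_q$ (localized, so $[X^v]$ is genuinely invertible) rather than $\QH(X)$, which is consistent with the statement. I anticipate that establishing the permutation property in (a) — in particular justifying that nonnegativity of structure constants of a map and its inverse forces a monomial permutation — is where I would need to be most careful, though it is a standard lattice argument.
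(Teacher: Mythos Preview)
Your proposal is correct and follows essentially the same route as the paper: nonnegativity and nonvanishing of Schubert quantum products plus $[X^v]^{\star p}=q^d$ for (a), the classical identity $[X_v]\cup[X^v]=\mP$ combined with (a) for (b), the explicit inverse $q^{-d}[X^v]^{\star(p-1)}$ for (c), and the resulting bijection on $\cB$ for (d). The paper's argument for (a) is marginally more direct---it observes that iterating $[X^v]\star(-)$ cannot decrease the number of terms, so $[X^v]^{\star p}\star[X^u]=q^d[X^u]$ forces a single term at the first step, without needing to phrase things via the inverse or a lattice-permutation lemma.
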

\begin{proof}
  Let $p = \ord([X^v])$ and write $[X^v]^{\star p} = q^d$. Part (a) follows
  because any quantum product $[X^v] \star [X^w]$ is a non-zero linear
  combination of $\cB$ with non-negative integer coefficients, and $[X^v]^{\star
  p} \star [X^u] = q^d [X^u]$ is a single element of $\cB$. Part (b) follows
  from (a) because $[X_v] \cdot [X^v] = \mP$ in $H^*(X)$. Part (c) holds because
  $q^{-d}[X^v]^{\star (p-1)}$ is an inverse of $[X^v]$, and part (d) follows
  from (a) and (c).
\end{proof}

\subsection{Cominuscule flag varieties}

A simple root $\ga \in \Phi^S$ is called \emph{cominuscule} if, when the highest
root of $\Phi$ is written as a linear combination of simple roots, the
coefficient of $\ga$ is one. The non-trivial Seidel classes of $X$ correspond to
cominuscule simple roots of $G$ by some remarkable relationships.

A flag variety $Y = G/P_Y$ is called \emph{cominuscule} if $P_Y \subset G$ is a
maximal parabolic subgroup defined by excluding a cominuscule simple root,
\[
  \Phi_Y = \Phi^S \ssm \{\ga\} \,.
\]
It is proved in Bourbaki that the set
\[
  W^\comin =
  \{ w_0^Y \mid Y \text{ is a cominuscule flag variety of $G$ } \} \cup \{1\}
\]
is a subgroup of the Weyl group $W$. Furthermore, it is proved in
\cite{chaput.manivel.ea:affine} that
\[
  [X^{w_0^Y}] \star [X^u] = q^{d(Y,u)} [X^{w_0^Y u}] \ \in \QH(X)
\]
for every
$w_0^Y \in W^\comin$ and
$u \in W^X$, where the degree $d(Y,u)$ is
explicitly given in terms of root data. In particular, the group $W^\comin$ acts
on $\QH(X)/\langle q-1 \rangle$ by $w_0^Y . [X^u] = [X^{w_0^Y}] \star [X^u]$. We
state the following special case.

\begin{thm}[Chaput, Manivel, Perrin]
  \label{thm:seidel}
  Let $X$ be any flag variety of $G$, and let $Y$ be any cominuscule flag variety
  of $G$. Then, $[X^{w_0^Y}]$ is a Seidel class of $X$, and $\ord([X^{w_0^Y}])$
  is equal to the order of $w_0^Y$ in $W^\comin$.
\end{thm}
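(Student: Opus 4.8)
The plan is to derive Theorem~\ref{thm:seidel} as a direct consequence of the general result of Chaput--Manivel--Perrin quoted just above it, together with the definitions of Seidel class and of the group $W^\comin$. First I would recall that for any cominuscule flag variety $Y = G/P_Y$ with $w_0^Y \in W^\comin$ and any $u \in W^X$, one has $[X^{w_0^Y}] \star [X^u] = q^{d(Y,u)}\,[X^{w_0^Y u}]$ in $\QH(X)$. Specializing $u = 1$ gives $[X^{w_0^Y}] \star [X^1] = [X^{w_0^Y}]$, which is consistent since $[X^1]$ is the identity; the substantive input is that quantum multiplication by $[X^{w_0^Y}]$ permutes the Schubert basis up to powers of $q$, realizing the action of $w_0^Y$ on $\QH(X)/\langle q-1\rangle$.

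Next I would iterate this relation. Writing $p$ for the order of $w_0^Y$ in the group $W^\comin$, one computes $[X^{w_0^Y}]^{\star p} \star [X^u] = q^{d_p(Y,u)}\,[X^{(w_0^Y)^p u}] = q^{d_p(Y,u)}\,[X^u]$ for every $u \in W^X$, where $d_p(Y,u) = \sum_{i=0}^{p-1} d\bigl(Y,(w_0^Y)^i u\bigr)$ and we used $(w_0^Y)^p = 1$ in $W$. Taking $u = 1$ shows $[X^{w_0^Y}]^{\star p} = q^{d_p(Y,1)}$, so $[X^{w_0^Y}]$ is a Seidel class with $p$ a period. To see that $p$ is exactly the order, observe that if $[X^{w_0^Y}]^{\star p'} = q^{d'}$ for some $1 \le p' < p$, then applying both sides to an arbitrary $[X^u]$ yields $[X^{(w_0^Y)^{p'} u}] = [X^u]$ after setting $q = 1$; since distinct elements of $W^X$ give distinct Schubert classes, this forces $(w_0^Y)^{p'} u = u$ for all $u$, hence $(w_0^Y)^{p'} = 1$ in $W$, contradicting minimality of $p$. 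Thus $\ord([X^{w_0^Y}]) = p$.

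The steps are essentially bookkeeping once the Chaput--Manivel--Perrin formula is granted; the only point requiring a little care is the passage to $\QH(X)/\langle q - 1\rangle$ when arguing minimality of the order, since the definition of $\ord$ for a Seidel class is the order in the unit group of that quotient ring, and one must check that the monomial $q^{d_p(Y,1)}$ becomes $1$ there (which is immediate) while no smaller power does (which is the group-theoretic argument just sketched). I expect the main obstacle, such as it is, to be purely expository: making precise that the $W^\comin$-action on $\QH(X)/\langle q-1\rangle$ defined by $w_0^Y . [X^u] = [X^{w_0^Y}] \star [X^u]$ is faithful on Schubert classes, so that the multiplicative order of $[X^{w_0^Y}]$ matches the order of the group element $w_0^Y$ — but this is already implicit in the statement that $W^\comin$ acts, and in the fact that the action permutes the finite basis $\{[X^u] : u \in W^X\}$.
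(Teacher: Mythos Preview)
The paper does not give its own proof of this result; it is attributed to Chaput--Manivel--Perrin and presented as a special case of their multiplication formula $[X^{w_0^Y}] \star [X^u] = q^{d(Y,u)}\,[X^{w_0^Y u}]$, with the derivation left implicit. Your plan to iterate that formula is exactly the intended route, and your argument that $[X^{w_0^Y}]^{\star p} = q^{d_p(Y,1)}$, hence that $[X^{w_0^Y}]$ is a Seidel class with order dividing $p = \ord(w_0^Y)$, is correct.

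There is, however, a gap in your minimality argument. From $[X^{(w_0^Y)^{p'} u}] = [X^u]$ you conclude $(w_0^Y)^{p'} u = u$ in $W$, but Schubert classes are indexed by cosets in $W/W_X$, so all you obtain is $(w_0^Y)^{p'} u \in u W_X$. Taking $u=1$ yields $(w_0^Y)^{p'} \in W^{\comin} \cap W_X$, and to finish you need this intersection to be trivial. Your closing remark that faithfulness ``is already implicit in the statement that $W^{\comin}$ acts, and in the fact that the action permutes the finite basis'' does not supply this: a group can certainly act by permutations on a finite set without acting faithfully. The fact $W^{\comin} \cap W_X = \{1\}$ does hold when $G$ is simple and $P_X$ is a proper parabolic, but it is a separate input that should be cited or proved rather than folded into an ``expository'' step.
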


The following table shows the complete list of cominuscule flag varieties $Y$ as
well as $\ord(w_0^Y)$ in each case.\bigskip

\begin{tabular}{|c|c|c|}
  \hline
  Name & $Y$ & $\ord(w_0^Y)$ \\
  \hline
  Grassmannian of type A & $\Gr(m,N)$ & $N/\gcd(m,N)$ \\
  Lagrangian Grassmannian & $\LG(N,2N)$ & $2$ \\
  Maximal orthogonal Grassmannian & $\OG(N,2N)$ & $2$ if $N$ is even, else $4$\\
  Quadric hypersurface & $Q^r$ & $2$ \\
  Cayley Plane & $E_6/P_6$ & 3 \\
  Freudenthal variety & $E_7/P_7$ & 2 \\
  \hline
\end{tabular}
\bigskip

If $X$ is a cominuscule flag variety, then the point class $\mP \in H^*(X)$ is a
Seidel class by \Theorem{seidel}.

\begin{cor}\label{cor:vtev-modulo}
  Let $X$ be any cominuscule flag variety. The virtual Tevelev degree
  \[
    \vTev^X_{g,d,n} =
    \coeff(\mP^{\star n} \star \mE^{\star g}, q^d \mP) =
    \coeff(\mE^{\star g}, q^d \mP^{\star(1-n)})
  \]
  depends on $n$ modulo $\ord(\mP) = \ord(w_0^X)$.
\end{cor}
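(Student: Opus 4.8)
The plan is to exploit that when $X$ is cominuscule, the point class $\mP$ is a Seidel class (by Theorem~\ref{thm:seidel}, since $\mP = [X^{w_0^X}]$ and $X$ itself is a cominuscule flag variety of $G$), and therefore invertible in $\QH(X)_q$ with $\mP^{\star p} = q^{d_0}$ for $p = \ord(\mP) = \ord(w_0^X)$ and some $d_0 \in H_2(X)$. Starting from Theorem~\ref{thm:qhred}, the first step is to rewrite the coefficient: since $\mP$ is a unit and $q$ is a unit in the localized ring $\QH(X)_q$, multiplication by $\mP^{\star(-n)}$ (together with the appropriate power of $q$) is a $\Q$-linear bijection $\QH(X)_q \to \QH(X)_q$ that permutes the basis $\cB$ up to scaling — in fact by part (d) of the preceding Lemma, applied with $\mA = \mP^{\star n}\star\mE^{\star g}$ and $\mB = q^d\mP$, we get
\[
  \coeff(\mP^{\star n}\star\mE^{\star g},\, q^d\mP) = \coeff(\mE^{\star g},\, q^d \mP^{\star(1-n)})\,,
\]
which is the displayed rewriting in the statement. (One should note $\mP^{\star(1-n)}$ means $q^{?}$ times a genuine Schubert basis element when $n \geq 1$, and lies in $\cB$; for $n = 0$ it is $\mP$ itself, and the formula is consistent.)

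The second step is the periodicity itself. I would show that replacing $n$ by $n + p$ does not change the right-hand side. Indeed $\mP^{\star(1-(n+p))} = \mP^{\star(1-n)} \star \mP^{\star(-p)} = q^{-d_0} \mP^{\star(1-n)}$, so
\[
  \coeff\bigl(\mE^{\star g},\, q^{d'}\mP^{\star(1-n-p)}\bigr) = \coeff\bigl(\mE^{\star g},\, q^{d' - d_0}\mP^{\star(1-n)}\bigr)\,,
\]
where $d'$ is the degree forced by the dimension constraint \eqn{bbb} for the pair $(n+p, d')$. The remaining point is purely bookkeeping on degrees: the dimension constraint \eqn{bbb} determines $d$ from $(g,n)$ via $\int_d c_1(T_X) = r(n+g-1)$, and since $c_1(T_X)$ is a positive multiple of the hyperplane-type generator on a cominuscule $X$ (more precisely, since the pairing $d \mapsto \int_d c_1(T_X)$ is injective on the rank-one lattice $H_2(X,\Z) \cong \Z$), increasing $n$ by $p$ increases $d$ by exactly the amount $d_0$ with $\int_{d_0} c_1(T_X) = rp$; one checks this matches the degree $d_0$ appearing in $\mP^{\star p} = q^{d_0}$ by comparing $\int_{d_0} c_1(T_X)$ with $rp$, using that $\mP^{\star p}$ has the correct cohomological degree $2rp$ and sits in $\QH(X)$ (so purely in the $q$-direction). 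Combining the two displays gives $\vTev^X_{g,d,n+p} = \vTev^X_{g,d',n+p}$ equal to $\vTev^X_{g,d,n}$, and the dependence on $n$ factors through $n \bmod p$.

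The main obstacle I anticipate is not conceptual but the degree-matching in step two: one must verify carefully that the curve class $d_0$ with $\mP^{\star \ord(\mP)} = q^{d_0}$ is precisely the one that compensates the shift $n \mapsto n+p$ in the dimension constraint, i.e.\ that $\int_{d_0}c_1(T_X) = r\cdot\ord(\mP)$. This should follow from the fact that $\mP$ has cohomological degree $2r$, so $\mP^{\star p}$ has degree $2rp$, forcing $q^{d_0}$ to have the same degree and hence $\int_{d_0} c_1(T_X) = rp$; but one should double-check the grading conventions on $\QH(X)$ (the degree of $q_\al$ being $2\int_{X_{s_\al}}c_1(T_X)$) to be sure the constants line up. Everything else is a direct application of parts (a)–(d) of the Seidel-class Lemma and of \eqn{bbb}.
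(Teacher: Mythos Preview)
Your argument is correct and is exactly the intended one: the paper states this corollary without proof, treating the displayed rewriting and the periodicity as immediate from \Theorem{seidel} together with parts (a)--(d) of the Seidel-class Lemma, and your write-up fills in precisely those details (including the degree bookkeeping showing that the $d_0$ in $\mP^{\star p}=q^{d_0}$ matches the shift in the dimension constraint). One cosmetic point: in your invocation of part~(d) you name $\mA=\mP^{\star n}\star\mE^{\star g}$ and $\mB=q^d\mP$, but those are the \emph{outputs} of multiplying by $[X^v]=\mP$ ($n$ times), not the inputs; the correct instantiation is $\mA=\mE^{\star g}$, $\mB=q^d\mP^{\star(1-n)}$, with (d) applied $n$ times (or once with $[X^v]=\mP^{\star n}$, using (a) and (c) to see this is still a basis permutation).
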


\begin{cor}\label{cor:vtev_zero}
  Let $X$ be any cominuscule flag variety of dimension $r$. The virtual Tevelev
  degrees of $X$ of genus 0 are determined by
  \[
    \vTev^X_{0,d,n} = \begin{cases}
      1 & \text{if $n \equiv 1$ modulo $\ord(\mP)$ and
      $\int_d c_1(T_X) = r (n - 1)$}, \\
      0 & \text{otherwise.}
    \end{cases}
  \]
\end{cor}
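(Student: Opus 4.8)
The approach is to specialize \Theorem{qhred} to genus zero and exploit that, for a cominuscule flag variety, the point class $\mP$ is a Seidel class, so that its quantum powers are single monomials in the basis $\cB$.

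If the dimension constraint $\int_d c_1(T_X) = r(n-1)$ fails, then $\vTev^X_{0,d,n} = 0$ by \Definition{mdef}, which is the "otherwise" value; so assume henceforth that it holds. By \Theorem{qhred} with $g = 0$ (equivalently, the first displayed identity of \Corollary{vtev-modulo} with $g=0$),
\[
  \vTev^X_{0,d,n} = \coeff(\mP^{\star n},\, q^d \mP)\,.
\]
Since $X$ is cominuscule, \Theorem{seidel} applied with $Y = X$ shows that $\mP = [X^{w_0^X}]$ is a Seidel class; set $N = \ord(\mP) = \ord(w_0^X)$. Using parts (a) and (c) of the Seidel-class lemma together with induction on $|n|$, one sees that $\mP^{\star n}$ is a single element of $\cB$ for every $n \in \Z$, say $\mP^{\star n} = q^{e_n}[X^{v_n}]$ with $e_n \in H_2(X,\Z)$ and $v_n \in W^X$. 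Hence $\vTev^X_{0,d,n} = \coeff(q^{e_n}[X^{v_n}],\, q^d[X^{w_0^X}])$, which is $1$ if $v_n = w_0^X$ and $e_n = d$, and $0$ in all other cases.

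Next I would determine when $v_n = w_0^X$. This holds precisely when $\mP^{\star n} = q^{e_n}\mP$, i.e. when $\mP^{\star(n-1)}$ is a monomial in the $q$-variables; reducing modulo $\langle q - 1\rangle$, this is equivalent to $\mP^{\,n-1} = 1$ in the group of units of $\QH(X)/\langle q-1\rangle$, hence to $n \equiv 1 \pmod N$. So for $n \not\equiv 1 \pmod{\ord(\mP)}$ we get $\vTev^X_{0,d,n} = 0$, as desired. When $n \equiv 1 \pmod N$, write $n - 1 = mN$ and $\mP^{\star N} = q^{d_1}$; then $\mP^{\star n} = (\mP^{\star N})^{\star m}\star \mP = q^{m d_1}\mP$, so $v_n = w_0^X$ and $e_n = m d_1$, giving $\vTev^X_{0,d,n} = \delta_{d,\, m d_1}$.

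It remains to show $d = m d_1$ under the running hypotheses. Here I would use that a cominuscule flag variety $X = G/P_X$ has $P_X$ maximal parabolic, so $H_2(X,\Z)\cong\Z$, and that the pairing $d\mapsto \int_d c_1(T_X)$ is injective since $X$ is Fano. Comparing cohomological degrees in the graded ring $\QH(X)$ and using $\deg \mP = 2r$, the identity $\mP^{\star N} = q^{d_1}$ forces $\int_{d_1}c_1(T_X) = rN$, whence $\int_{m d_1}c_1(T_X) = rmN = r(n-1)$. Since the dimension constraint gives $\int_d c_1(T_X) = r(n-1) = \int_{m d_1}c_1(T_X)$ and the pairing is injective, $d = m d_1$; therefore $\vTev^X_{0,d,n} = 1$, completing all cases. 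The one point requiring care is the claim that $\mP^{\star n}$ lies in $\cB$ with unit coefficient rather than being merely a non-negative integer combination of $\cB$; this is exactly what parts (a)--(c) of the Seidel-class lemma provide, after which the argument is bookkeeping with the grading and with $n$ modulo $\ord(\mP)$.
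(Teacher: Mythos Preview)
Your argument is correct and follows the approach the paper intends: specialize \Theorem{qhred} (or equivalently \Corollary{vtev-modulo}) to $g=0$, use that $\mP$ is a Seidel class so that $\mP^{\star n}$ is a single element of $\cB$, and then match the two basis elements. The paper states the result as an immediate corollary without spelling out the proof, so you have supplied exactly the details that were left implicit; in particular, your final paragraph verifying $d = m d_1$ via the grading and the Fano/Picard-rank-one property of cominuscule $X$ is the right way to close the argument.
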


Some coefficients of the quantum Euler class $\mE$ of an arbitrary flag
variety have the following combinatorial description.

\begin{thm}
  \label{thm:qdiag-seidel}
  Let $X$ be any flag variety and assume that $[X_v]$ is a Seidel class. Then we
  have
  \[
    \coeff(\mE, q^d [X^v]) =
    \# \{ u \in W^X \mid [X_v] \star [X^u] = q^d [X^u] \} \,.
  \]
\end{thm}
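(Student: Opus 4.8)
The plan is to unwind $\coeff(\mE,\,q^d[X^v])$ into a sum of genus-$0$ three-point Gromov--Witten invariants, re-symmetrize those invariants, and then apply the defining property of a Seidel class; the computation is short, and the only real care needed is in tracking upper versus lower Schubert indices. First I would substitute the Schubert-basis expression $\mE = \sum_{u\in W^X}[X_u]\star[X^u]$ from \eqn{intint} and use $\Q$-linearity of the coordinate functional $\coeff(-,\,q^d[X^v])$ on the basis $\cB$ to obtain
\[
  \coeff(\mE,\,q^d[X^v]) = \sum_{u\in W^X}\coeff([X_u]\star[X^u],\,q^d[X^v])
  = \sum_{u\in W^X}\gw{[X_u],[X^u],[X_v]}{d}\,,
\]
the last equality being the definition of the quantum product.

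Next I would use that the three-point invariant $\gw{\al_1,\al_2,\al_3}{d}$ is symmetric under permutations of its cohomology insertions, since a permutation of the marked points is an automorphism of $\barM_{0,3}(X,d)$ respecting the virtual class and simply permuting the evaluation maps. Thus $\gw{[X_u],[X^u],[X_v]}{d} = \gw{[X_v],[X^u],[X_u]}{d}$, and reading the right-hand side off the quantum product formula once more identifies it as $\coeff([X_v]\star[X^u],\,q^d[X^u])$. Hence
\[
  \coeff(\mE,\,q^d[X^v]) = \sum_{u\in W^X}\coeff([X_v]\star[X^u],\,q^d[X^u])\,.
\]

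Finally I would invoke that $[X_v]$ is a Seidel class: by part (a) of the Lemma on Seidel classes above (whose proof applies verbatim to any Seidel class of $X$, including $[X_v]$), the product $[X_v]\star[X^u]$ is a single element of $\cB$. Writing $[X_v]\star[X^u] = q^{e(u)}[X^{\sigma(u)}]$ with $e(u)\in H_2(X)$ and $\sigma(u)\in W^X$, the $u$-th summand above equals $1$ precisely when $e(u)=d$ and $\sigma(u)=u$, that is, when $[X_v]\star[X^u] = q^d[X^u]$, and equals $0$ otherwise. Summing over $u\in W^X$ gives $\#\{u\in W^X \mid [X_v]\star[X^u] = q^d[X^u]\}$, which is the assertion.

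I do not anticipate a genuine obstacle, since the proof is essentially a repackaging of \eqn{intint}, the $S_3$-symmetry of three-point invariants, and the Lemma on Seidel classes. The one point demanding attention is the index bookkeeping in the symmetrization step --- making sure that the condition which survives is $[X_v]\star[X^u] = q^d[X^u]$ with the \emph{same} index $u$ rather than its Poincar\'e dual --- together with the observation that the Seidel-class Lemma applies to $[X_v]$ despite its lower index (for instance the point class occurs both as $[X_1]$ and as $[X^{w_0^X}]$).
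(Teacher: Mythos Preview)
Your proposal is correct and follows essentially the same argument as the paper: expand $\mE$ via \eqn{intint}, rewrite each summand as the three-point invariant $\gw{[X_u],[X^u],[X_v]}{d}$, re-read it as $\coeff([X_v]\star[X^u],\,q^d[X^u])$, and conclude using that $[X_v]$ is a Seidel class so $[X_v]\star[X^u]\in\cB$. Your added remarks on the $S_3$-symmetry and on why the Seidel-class lemma applies to $[X_v]$ are accurate and match what the paper uses implicitly.
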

\begin{proof}
  We have
  \[
    \begin{split}
      \coeff(\mE, q^d [X^v])
      &= \sum_{u \in W^X} \coeff([X_u] \star [X^u], q^d [X^v]) \\
      &= \sum_{u \in W^X} \gw{[X_u], [X^u], [X_v]}{d} \\
      &= \sum_{u \in W^X} \coeff([X_v] \star [X^u], q^d [X^u]) \,.
    \end{split}
  \]
  Since $[X_v]$ is a Seidel class, we have $[X_v] \star [X^u] \in \cB$, hence
  \[
    \coeff([X_v] \star [X^u], q^d [X^u]) = \begin{cases}
      1 & \text{if $[X_v] \star [X^u] = q^d [X^u]$}, \\
      0 & \text{otherwise,}
    \end{cases}
  \]
  which proves the identity.
\end{proof}

\begin{cor}
  Let $X$ be any cominuscule flag variety. The virtual Tevelev degrees of $X$ of
  genus 1 are determined by
  \[
    \vTev^X_{1,d,n} \, = \, \coeff(\mE, q^d \mP^{\star(1-n)}) \, = \,
    \# \{ u \in W^X \mid \mP^{\star n} \star [X^u] = q^d [X^u] \} \,.
  \]
\end{cor}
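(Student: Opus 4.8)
The plan is to derive both equalities from results already established. The first equality $\vTev^X_{1,d,n} = \coeff(\mE, q^d\mP^{\star(1-n)})$ is simply the case $g=1$ of \Corollary{vtev-modulo} (which in turn rests on \Theorem{qhred} together with \Theorem{seidel}, guaranteeing that $\mP$ is a Seidel class of the cominuscule variety $X$ so that part~(d) of the lemma on Seidel classes can be iterated to move all $n$ factors of $\mP$ into the second argument of $\coeff$). Hence the real content is the second equality, and for that I would appeal to \Theorem{qdiag-seidel}.

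To set up \Theorem{qdiag-seidel}, whose hypothesis concerns the Poincar\'e-dual class $[X_v]$, the first step is to rewrite $q^d\mP^{\star(1-n)}$ as $q^{d'}[X^v]$ for a single $v \in W^X$. Since $\mP$ is a Seidel class, parts (a) and (c) of the Seidel lemma show $\mP^{\star m} \in \cB$ for every integer $m$; in particular $\mP^{\star(1-n)} = q^e[X^v]$ for unique $v\in W^X$ and $e \in \Z$. A quick check that $[X^v]^{\star \ord(\mP)}$ is a power of $q$ identifies $[X^v]$ as a Seidel class, so part (b) of the Seidel lemma yields $[X_v]\star[X^v]=\mP$, and therefore
\[
  [X_v] = \mP \star [X^v]^{\star(-1)} = q^{e}\,\mP^{\star n} \,.
\]
Being a Schubert class equal to $q^e$ times $\mP^{\star n}$, the class $[X_v]$ also has finite order (raise it to the power $\ord(\mP)$), so $[X_v]$ is a Seidel class and \Theorem{qdiag-seidel} applies.

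Putting $d' = d+e$, \Theorem{qdiag-seidel} gives
\[
  \coeff(\mE, q^d\mP^{\star(1-n)}) = \coeff(\mE, q^{d'}[X^v]) =
  \#\{u\in W^X \mid [X_v]\star[X^u] = q^{d'}[X^u]\}\,.
\]
Substituting $[X_v]=q^e\mP^{\star n}$ and cancelling the common factor $q^e$ from the condition $q^e\mP^{\star n}\star[X^u] = q^{d+e}[X^u]$ turns the index set into $\{u\in W^X \mid \mP^{\star n}\star[X^u]=q^d[X^u]\}$, which is precisely the asserted formula. The only delicate point is the bookkeeping of $q$-exponents once we pass to the localized ring $\QH(X)_q$ and invert $\mP$ — specifically, confirming that both auxiliary classes $[X^v]$ and $[X_v]$ are honest Seidel classes so that the Seidel lemma and \Theorem{qdiag-seidel} are legitimately available. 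Everything else is formal manipulation of coefficients.
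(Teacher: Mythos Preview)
Your proof is correct and follows essentially the same route as the paper's: write $q^d\mP^{\star(1-n)}$ as $q^{d'}[X^v]$ for a Seidel class $[X^v]$, identify $[X_v]$ as $q^e\mP^{\star n}$ and observe it too is a Seidel class, then apply \Theorem{qdiag-seidel} and unwind the $q$-powers. The only cosmetic difference is where the factor $q^d$ is absorbed (the paper sets $q^d\mP^{\star(1-n)}=q^e[X^v]$ directly and obtains $q^{-d}\mP^{\star n}=q^{-e}[X_v]$), but the logic is identical.
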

\begin{proof}
  The point class $\mP$ is a Seidel class of $X$ by \Theorem{seidel}. Write
  \[
    q^d \mP^{\star(1-n)} = q^e [X^v] \ \in \QH(X)_q \,.
  \]
  Then, $[X^v]$ is a Seidel class. Since $[X_v] \star [X^v] = \mP$, $[X_v]$ is
  also a Seidel class. The calculation
  \[
    q^{-d} \mP^{\star n} \star q^e[X^v]
    = q^{-d} \mP^{\star n} \star q^d \mP^{\star(1-n)} = \mP
    = q^{-e}[X_v] \star q^e[X^v]
  \]
  shows that $q^{-d}\mP^{\star n} = q^{-e}[X_v]$. We therefore obtain
  \[
    \begin{split}
      \coeff(\mP^{\star n} \star \mE, q^d \mP) &= \coeff(\mE, q^e[X^v]) \\
      &= \# \{ u \in W^X \mid [X_v] \star [X^u] = q^e [X^u] \} \\
      &= \# \{ u \in W^X \mid \mP^{\star n} \star [X^u] = q^d [X^u] \} \,,
    \end{split}
  \]
  as required.
\end{proof}

\subsection{Quadric hypersurfaces}\label{sec:quadric}

Let $X = Q^r \subset \bP^{r+1}$ be a quadric hypersurface of dimension $r \geq
3$. Then, $Q^r$ is a cominuscule flag variety. The quantum cohomology ring
$\QH(Q^r)$ is a basic example in the subject, see e.g.\
\cite{chaput.manivel.ea:quantum*1}. Structure theorems for this ring are also
special cases of results about the quantum cohomology of complete intersections
proved in \Section{cplint}. We have $\deg(q) = \deg(\mP)$, and the relation
$\mP^{\star 2} = q^2$ holds in $\QH(Q^r)$ since three general points of $Q^r$
define a plane which cuts $Q^r$ in a conic. Define the constant
\[
  \delta = \begin{cases}
    1 & \text{if $r$ is odd},\\
    2 & \text{if $r$ is even.}
  \end{cases}
\]
Then, $H^*(Q^r)$ has rank $r + \delta$. For $u \in W^X$, we have{\footnote{The
quantum multiplication here can also be deduced from the complete intersection
analysis in \Section{cplint}.}}
\[
  [X_u] \star [X^u] = \begin{cases}
    \mP & \text{if $\ell(u) \in \{ 0, r/2, r \}$,} \\
    \mP + q & \text{otherwise.}
  \end{cases}
\]
This follows by noting that $[X^u]$ is a Seidel class when $\ell(u)=r/2$ by
\Theorem{seidel}, and from the quantum Chevalley formula
\cite[Thm.~10.1]{fulton.woodward:quantum} when $\ell(u) \neq r/2$. We compute
the quantum Euler class of $Q^r$ as
\[
  \mE = \sum_{u \in W^X} [X_u] \star [X^u] = (r + \delta)\mP + (r - \delta)q \,.
\]

\begin{thm}
  Let $g,d,n \geq 0$ satisfy the constraint $d = n+g-1$. Then,
  \[
    \vTev^{Q^r}_{g,d,n} =
    \frac{(2r)^g + (-1)^d (2\delta)^g}{2} \,.
  \]
\end{thm}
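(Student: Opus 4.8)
The plan is to invoke \Theorem{qhred}, which gives
\[
  \vTev^{Q^r}_{g,d,n} = \coeff\big(\mP^{\star n} \star \mE^{\star g},\, q^d\mP\big) \,,
\]
and then to evaluate the right-hand side using only the two facts established above for the quadric: the relation $\mP^{\star 2} = q^2$ in $\QH(Q^r)$, and the formula $\mE = (r+\delta)\mP + (r-\delta)q$. Every class appearing in the computation lies in the commutative subalgebra generated by $\mP$ and $q$, in which $\mP^{\star 2} = q^2$ is the only relation we need.

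First I would diagonalize that subalgebra. Passing to the localized ring $\QH(Q^r)_q$ and setting $t = q^{-1}\mP$, we have $t^{\star 2} = 1$ and $q^{-1}\mE = (r+\delta)\,t + (r-\delta)$. For scalars $\alpha,\beta$ and any element $t$ with $t^{\star 2}=1$, evaluating at $t = \pm 1$ (equivalently, using the splitting $\Q[q^{\pm1}][t]/(t^2-1)\cong \Q[q^{\pm1}]\times\Q[q^{\pm1}]$) gives
\[
  (\alpha t + \beta)^{\star g} = \frac{(\alpha+\beta)^g + (\beta-\alpha)^g}{2}
  + \frac{(\alpha+\beta)^g - (\beta-\alpha)^g}{2}\, t \,.
\]
With $\alpha = r+\delta$, $\beta = r-\delta$, so that $\alpha+\beta = 2r$ and $\beta-\alpha = -2\delta$, multiplying through by $q^g$ yields
\[
  \mE^{\star g} = \frac{(2r)^g + (-2\delta)^g}{2}\, q^g
  + \frac{(2r)^g - (-2\delta)^g}{2}\, q^{g-1}\mP \,,
\]
which involves only non-negative powers of $q$ (the second term is absent when $g=0$).

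Next I would record that $\mP^{\star n}$ equals $q^{n}$ for $n$ even and $q^{n-1}\mP$ for $n$ odd, multiply this into the displayed expression for $\mE^{\star g}$, reduce once more via $\mP^{\star 2} = q^2$, and read off the coefficient of $q^{n+g-1}\mP$: it is $\tfrac12\big((2r)^g + (-2\delta)^g\big)$ when $n$ is odd and $\tfrac12\big((2r)^g - (-2\delta)^g\big)$ when $n$ is even. The final step uses the dimension constraint $d = n+g-1$, which gives $d \equiv g \pmod 2$ when $n$ is odd and $d \equiv g+1 \pmod 2$ when $n$ is even; in both situations the relevant term equals $(-1)^d (2\delta)^g$, so the two cases collapse to the single formula $\vTev^{Q^r}_{g,d,n} = \tfrac12\big((2r)^g + (-1)^d (2\delta)^g\big)$. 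There is no essential obstacle here: the whole argument is a short computation in the two-dimensional $\Q[q^{\pm1}]$-algebra $\Q[\mP,q^{\pm1}]/(\mP^{\star 2}-q^2)$. The only point requiring care is the final bookkeeping, where the parity of $d$ must be correctly aligned with that of $n+g$ so that the odd-$n$ and even-$n$ computations unify into the stated answer.
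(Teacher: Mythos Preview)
Your proof is correct and follows essentially the same approach as the paper: both use Theorem~\ref{thm:qhred}, the relation $\mP^{\star 2}=q^2$, and the formula $\mE=(r+\delta)\mP+(r-\delta)q$ to reduce the problem to a short computation in the rank-two $\Q[q]$-algebra generated by $\mP$. The only cosmetic difference is that the paper expands $\mE^{\star g}$ via the binomial theorem and then recognizes the resulting parity-restricted sum as $\tfrac12\big((2r)^g - (-1)^{n+g}(2\delta)^g\big)$, whereas you diagonalize via $t=q^{-1}\mP$ with $t^{\star 2}=1$ to obtain the closed form for $\mE^{\star g}$ directly; these are two phrasings of the same identity.
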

\begin{proof}
  Using the binomial formula
  \[
    \mE^{\star g} = \sum_{i=0}^g \binom{g}{i} (r+\delta)^i (r-\delta)^{g-i}\,
    \mP^{\star i}\, q^{g-i}
  \]
  and the relation $\mP^{\star 2} = q^2$, we obtain
  \[
    \vTev^{Q^r}_{g,d,n}
    = \coeff(\mE^{\star g}, q^d \mP^{\star(1-n)})
    = \sum_{\substack{0 \leq i \leq g\\ n+i \text{ odd}}}
    \binom{g}{i}\, (r+\delta)^i\, (r-\delta)^{g-i} \,,
  \]
  which matches the expansion of
  \[
    \frac{\big( (r+\delta) + (r-\delta) \big)^g
    - (-1)^{n+g}\, \big( (r+\delta) - (r-\delta) \big)^g}{2} \,,
  \]
  as required.
\end{proof}

\subsection{Grassmannians of type A}

For the remaining cominuscule flag varieties we give simple formulas for the
virtual Tevelev degrees of genus 1 based on \Theorem{qdiag-seidel}. More
generally, we give formulas for all numbers $\coeff(\mE, q^e [X^v])$ for which
$[X^v]$ is a Seidel class.

Let $X = \Gr(m,N)$ be the Grassmannian of $m$-planes in $\C^N$. The dimension of
$X$ is $r = m(N-m)$. The elements of $W^X$ can be identified with partitions
$\la = (\la_1 \geq \cdots \geq \la_m \geq 0)$ for which $\la_1 \leq N-m$. The
corresponding opposite Schubert variety is given by
\[
  X^\la = \{ V \in X \mid \dim(V \cap \C^{N-m+i-\la_i}) \geq i \text{ for }
  1 \leq i \leq m \} \,,
\]
where $\C^k \subset \C^N$ denotes the $B^-$-stable subspace of dimension $k$.
The point class is $\mP = [X^{((N-m)^m)}]$, where $(a^b)$ denotes the partition
$(a,\dots,a)$ with $b$ copies of $a$.

The quantum cohomology ring $\QH(X)$ was computed in \cite{witten:verlinde,
bertram:quantum}. Elementary proofs of the facts we need can be found in
\cite{buch:quantum}. The grading of $\QH(X)$ is determined by
\[
  \deg [X^\la] = 2 |\la| = 2 \sum \la_i
\]
and $\deg(q) = 2N$. The subgroup of Seidel classes in $(\QH(X)/\langle q-1
\rangle)^\times$ is generated by the class $[X^{(1^m)}]$, the top Chern class of
the dual of the tautological subbundle on $X$. We have $\ord([X^{(1^m)}]) = N$,
and the powers of $[X^{(1^m)}]$ are given by
\[
  [X^{(1^m)}]^{\star k} = \begin{cases}
    [X^{(k^m)}] & \text{if $0 \leq k \leq N-m$,} \\
    q^{k-N+m} [X^{((N-m)^{N-k})}] & \text{if $N-m \leq k \leq N$.}
  \end{cases}
\]

\begin{thm}\label{thm:qdiag-A}%
  Let $X = \Gr(m,N)$, and let $k,d \in \Z$ satisfy $Nd + mk = r$. Then,
  \[
    \coeff(\mE, q^d [X^{(1^m)}]^{\star k}) = \binom{cN/m}{c} \,,
  \]
  where $c = \gcd(d,m)$.
\end{thm}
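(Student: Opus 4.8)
By Theorem~\ref{thm:qdiag-seidel}, since $[X^{(1^m)}]$ is a Seidel class, its inverse-transpose partner $[X_{(1^m)}]$ (equivalently, the relevant power determined by the identity $[X_v]\star[X^v]=\mP$) is also a Seidel class, so the coefficient we want equals the cardinality of a fixed-point set:
\[
  \coeff(\mE, q^d [X^{(1^m)}]^{\star k}) =
  \#\{\la \in W^X \mid \sigma\star[X^\la] = q^{d'}[X^\la]\}
\]
for the appropriate Seidel class $\sigma$ and degree $d'$ read off from the relation $q^{d}[X^{(1^m)}]^{\star k}$ versus $\mP$. So the plan is: (i) use the explicit powers of $[X^{(1^m)}]$ quoted just above the theorem to rewrite $q^d[X^{(1^m)}]^{\star k}$ in the basis $\cB$ and identify which Seidel class $\sigma = [X_v]$ appears; (ii) compute the action of Seidel classes on Schubert classes combinatorially; (iii) count the fixed partitions.

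For step (ii), the key fact is that multiplication by the generating Seidel class $[X^{(1^m)}]$ (or rather by $[X_{(1^m)}] = [X^{((N-m)^{m-1},?)}]\cdots$, the class dual to it) corresponds, under the identification of $W^X$ with $\{0,1\}$-sequences/bit strings of length $N$ with $m$ ones, to a \emph{cyclic rotation} of the bit string — this is the well-known description of the Seidel/shift operator on $\QH(\Gr(m,N))$ (cf.\ \cite{postnikov:affine, chaput.manivel.ea:affine}). Concretely, to a partition $\la$ inside the $m\times(N-m)$ rectangle one associates its boundary lattice path, a word in $m$ up-steps and $N-m$ right-steps; multiplication by the Seidel generator rotates this cyclic word by one position, and the exponent of $q$ records how many times the ``wrap-around'' step crossed the seam. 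Thus $\sigma\star[X^\la] = q^{d'}[X^\la]$ holds precisely when the cyclic word is invariant under rotation by the number of steps corresponding to $\sigma$. The number of steps attached to $\sigma$ is governed by $k$ modulo $N$ together with the $q$-shift $d$; after reducing, one finds that the relevant rotation is by $k$ positions in $\Z/N$, whose order in the rotation action is $N/\gcd(k,N)$. Combined with the constraint $Nd+mk=r=m(N-m)$, which forces $\gcd(d,m)=\gcd(k,m)=:c$ and pins down $\gcd(k,N)$ in terms of $c$, the invariant cyclic words of length $N$ with $m$ ones under rotation by $k$ are exactly those built by repeating a block of length $N/(N/\gcd(k,N))$; a short counting argument (choosing where the $c$ ones go in one fundamental block of length $N/(cN/m)\cdot$, i.e.\ a block containing $m/(cN/m \cdot \ldots)$ — carefully: the block has $cN/m$ positions and $c$ ones) yields exactly $\binom{cN/m}{c}$ such words.

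The main obstacle is step (i)–(ii) bookkeeping: correctly matching the abstract Seidel class $[X_v]$ extracted from $q^d[X^{(1^m)}]^{\star k}=\mP\star(\cdots)$ with a concrete rotation amount, and then verifying that the fixed-point condition ``$[X_v]\star[X^\la]=q^{d'}[X^\la]$ with the \emph{correct} power of $q$'' (not merely ``$[X^\la]$ is fixed up to some power of $q$'') translates into rotation-invariance of the cyclic word by exactly $k$ positions. Getting the $q$-exponents to match requires using the dimension constraint $Nd+mk=r$ in an essential way — this is what rules out spurious solutions and makes the count come out to the clean binomial. Once the correspondence ``Seidel action $=$ cyclic rotation'' and the reduction $\gcd$-computation are in hand, the enumeration of necklace-periodic binary words is elementary. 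I would organize the write-up as: a lemma recording the bit-string/cyclic-word model and the rotation description of Seidel multiplication on $\Gr(m,N)$ (citing \cite{postnikov:affine}), then the identification of the relevant rotation, then the count.
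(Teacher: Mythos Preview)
Your approach is essentially the paper's: the paper works in Postnikov's cylinder $\wh\cP_X = \Z^2/\Z(-m,N-m)$, where Seidel classes act by translating the border paths of order ideals, while you encode the same paths as cyclic binary words of length $N$ with $m$ ones, on which Seidel classes act by rotation. Both reduce the count (via \Theorem{qdiag-seidel}) to paths/words with a prescribed extra period, and the enumeration gives $\binom{cN/m}{c}$.

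Two corrections to the bookkeeping you flagged as the main obstacle. First, the rotation is by $N-m-k$, not $k$: the Seidel operator to apply is $q^{-d}[X^{(1^m)}]^{\star(N-m-k)}$ (since $[X^{(1^m)}]^{\star(N-m)}=\mP$), and the dimension constraint $Nd+mk=r$ is precisely what makes the $q$-power match automatically once the word is rotation-invariant --- indeed $m(N-m-k)=Nd$, so the degree jump under $[X^{(1^m)}]^{\star(N-m-k)}$ is exactly $d$. Second, the assertion $\gcd(d,m)=\gcd(k,m)$ is false: in $\Gr(2,5)$ with $(d,k)=(0,3)$ one gets $2\neq 1$, and rotation by $k=3$ in $\Z/5$ would have only the constant words as fixed points, giving $0$ rather than $\binom{5}{2}=10$. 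The correct period computation is
\[
  \gcd(N-m-k,\,N)=\gcd(Nd/m,\,N)=cN/m,
\]
using $N-m-k=Nd/m$ from the constraint and writing $d=cd'$, $m=cm'$ with $\gcd(d',m')=1$; each fundamental block of length $cN/m$ then carries exactly $c$ ones, whence $\binom{cN/m}{c}$.
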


\begin{cor}
  Let $n,d \geq 0$ satisfy $Nd = r n$. Then,
  \[
    \vTev^{\Gr(m,N)}_{1,d,n} = \binom{cN/m}{c} \,,
  \]
  where $c = \gcd(d,m)$.
\end{cor}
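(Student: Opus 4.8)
The plan is to combine \Theorem{qdiag-seidel} with the cyclic symmetry of quantum Schubert calculus. Set $X=\Gr(m,N)$, $r=m(N-m)$, and write $S:=[X^{(1^m)}]$ for the generating Seidel class; recall from the explicit power formula above that $\ord(S)=N$, $S^{\star N}=q^m$, $\mP=S^{\star(N-m)}$, while $\deg S=2m$ and $\deg q=2N$. First I would reduce $\coeff(\mE,q^d S^{\star k})$ to a fixed‑point count. Since $S$ is a Seidel class, $q^d S^{\star k}=q^e[X^v]$ for a unique $v\in W^X$ and $e\in\Z$, with $[X^v]$ again a Seidel class. By the properties of Seidel classes recorded above $[X_v]\star[X^v]=\mP$, so, using $\mP=S^{\star(N-m)}$ and $[X^v]^{\star(-1)}=q^{e-d}S^{\star(-k)}$,
\[
  [X_v]=\mP\star[X^v]^{\star(-1)}=q^{e-d}\,S^{\star k'},\qquad k':=N-m-k,
\]
which is again a Seidel class of $X$. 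Hence \Theorem{qdiag-seidel} applies to $q^e[X^v]$, and after cancelling $q^{e}$ it yields
\[
  \coeff(\mE,\,q^d S^{\star k}) \;=\; \#\{\,u\in W^X \mid S^{\star k'}\star[X^u]=q^d[X^u]\,\},
\]
and under the hypothesis $Nd+mk=r$ we also have $Nd=mk'$, which is what drives the rest.

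Next I would make this count explicit. Identify $W^X$ with the set of $m$-element subsets of $\Z/N$ in the standard way, via the partition's boundary path; then, by the cyclic symmetry of $\QH(\Gr(m,N))$ \cite{postnikov:affine, chaput.manivel.ea:affine} --- which can also be read off from the explicit powers of $S$ together with the fact that multiplication by a Seidel class permutes $\cB$ --- multiplication by $S$ acts on this set as translation by $1$, up to a power of $q$. Thus multiplication by $S^{\star k'}$ is translation by $k'$, up to a power of $q$, and for $u\in W^X$ corresponding to a subset $T$, the equation $S^{\star k'}\star[X^u]=q^d[X^u]$ holds if and only if $T+k'=T$ in $\Z/N$: once $T$ is invariant, the $q$-exponent is forced to be $\deg(S^{\star k'})/(2N)=mk'/N=d$. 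The $m$-subsets fixed by translation by $k'$ are exactly the unions of cosets of $\langle\gcd(N,k')\rangle\subseteq\Z/N$; writing $g=\gcd(N,k')$ there are $g$ such cosets, each of size $N/g$, so the number of such subsets of cardinality $m$ is $\binom{g}{\,mg/N\,}$ (in particular $N/g$ must divide $m$).

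It remains to rewrite $g$ in terms of $c=\gcd(d,m)$. Writing $d=cd_1$, $m=cm_1$ with $\gcd(d_1,m_1)=1$, the relation $Nd=mk'$ gives $Nd_1=m_1k'$, whence $m_1\mid N$; setting $N=m_1N_1$ one finds $k'=N_1d_1$, and hence $g=\gcd(m_1N_1,N_1d_1)=N_1=Nc/m$ with $mg/N=c$. Substituting into the previous step,
\[
  \coeff(\mE,\,q^d S^{\star k}) \;=\; \binom{g}{\,mg/N\,} \;=\; \binom{cN/m}{c},
\]
as asserted. (When $d=0$ one has $k'\equiv0$, every $m$-subset is invariant, and the formula returns $\binom{N}{m}=\chi(X)=\coeff(\mE,\mP)$, a reassuring check.)

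The step I expect to be the main obstacle is pinning down the Seidel action: one needs both the precise combinatorial form of multiplication by $S$ and the verification that an invariant subset contributes exactly $q^d$ and no other power of $q$ --- this is where the constraint $Nd=mk'$ and the gradings $\deg S=2m$, $\deg q=2N$ are used.
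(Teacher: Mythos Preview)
Your argument is correct and complete as a proof of \Theorem{qdiag-A} (the statement immediately preceding this Corollary), but you never connect it to $\vTev^{\Gr(m,N)}_{1,d,n}$: you work under the hypothesis $Nd+mk=r$ and compute $\coeff(\mE,q^dS^{\star k})$, whereas the Corollary's hypothesis is $Nd=rn$ and its conclusion concerns a virtual Tevelev degree. The missing step is routine --- use \Corollary{vtev-modulo} to write $\vTev^{\Gr(m,N)}_{1,d,n}=\coeff(\mE,q^d\mP^{\star(1-n)})=\coeff(\mE,q^dS^{\star k})$ with $k=(N-m)(1-n)$, and observe that $Nd=rn$ then forces $Nd+mk=r$ --- but it should be stated; as written, the integer $n$ never appears in your proof.

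As for \Theorem{qdiag-A} itself, your approach is essentially the paper's: both use \Theorem{qdiag-seidel} to reduce to counting Schubert classes fixed by a Seidel operator, and both model this operator via Postnikov's cyclic symmetry. You phrase the count as $m$-subsets of $\Z/N\Z$ invariant under translation by $k'$, while the paper counts periodic lattice paths in $\Z^2$ invariant under translation by the vector $(-d,N-m-k-d)$; these are two descriptions of the same combinatorics, and the final binomial coefficient is obtained by the same elementary arithmetic.
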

\begin{proof}
  The result follows from \Theorem{qdiag-A} since
  \[
    \coeff(\mP^{\star n} \star \mE, q^d \mP)
    = \coeff(\mE, q^d \mP^{\star(1-n)})
    = \coeff(\mE, q^d [X^{(1^m)}]^{\star k}) \,,
  \]
  where $k = (N-m)(1-n)$.
\end{proof}

We will prove \Theorem{qdiag-A} using Postnikov's cylindrical model of the basis
\[
  \cB = \{q^d [X^\la] \}
\]
of $\QH(X)_q$. Define the set
\[
  \cP_X = \{ (i,j) \in \Z^2 \mid 1 \leq i \leq m
  \text{ and } 1 \leq j \leq N-m \} \,.
\]
We identify $\cP_X$ with a rectangle of boxes with $m$ rows and $N-m$ columns.
The pair $(i,j)$ represents the box in row $i$ and column $j$, where the row
number $i$ increases from top to bottom and the column number $j$ increases from
left to right. Let $\leq$ be the \emph{north-west to south-east} partial order
on $\cP_X$, defined by $(i',j') \leq (i'',j'')$ if and only if $i' \leq i''$ and
$j' \leq j''$. A partition $\la$ as above corresponds to the (lower) order ideal
in $\cP_X$ of boxes $(i,j)$ for which $1 \leq i \leq m$ and $1 \leq j \leq
\la_i$.

More generally, the elements $q^d [X^\la]$ of $\cB$ correspond to (proper,
non-empty, lower) order ideals in Postnikov's \emph{cylinder}
\cite{postnikov:affine},
\[
  \wh\cP_X = \Z^2/\Z(-m,N-m) \,.
\]
In other words, we extend the rectangle $\cP_X$ to the plane $\Z^2$, but
identify two boxes if they differ by an integer multiple of the vector
$(-m,N-m)$. The order ideal of $1 \in \cB$ is the set $I_0 = \{ (i,j) \in
\wh\cP_X \mid (i,j) \not\geq (1,1) \}$. Equivalently, $I_0$ is the set of boxes
of $\wh\cP_X \ssm \cP_X$ that are smaller than some box in $\cP_X$. The order
ideal of $[X^\la]$ is the union of $I_0$ with the order ideal of $\la$ in
$\cP_X$.

An order ideal of $\wh\cP_X$ is determined by its border, which is a path of
horizontal and vertical line segments of unit length, with the property that the
path is invariant under translation by the vector $(-m,N-m)$, see
\Figure{cylinder}. The order ideal of $q^d [X^\la]$ is obtained by translating
the border of the order ideal of $[X^\la]$ by the vector $(d,d)$. In particular,
multiplication by $q$ corresponds to translation by $(1,1)$. Similarly, quantum
multiplication by $[X^{(1^m)}]$ corresponds to translation by $(0,1)$.

\begin{figure}[h]
  \caption{The border of the order ideal of $[X^{(5,3,3,2)}]$ in the cylinder
    $\wh\cP_X$ of $X = \Gr(4,6)$. The boxes of $\cP_X$ are colored gray.}
  \label{fig:cylinder}
  \begin{center}
    \begin{tikzpicture}[x=4mm,y=4mm]
      \draw[fill=gray!30] (0,0) -- (6,0) -- (6,4) -- (0,4) -- cycle;
      \foreach \r in {1,...,3} { \draw (0,\r) -- (6,\r); }
      \foreach \c in {1,...,5} { \draw (\c,0) -- (\c,4); }
      \def\pp{-- ++(3,0) -- ++(0,1) -- ++(1,0) -- ++(0,2) -- ++(2,0) -- ++(0,1)}
      \draw [line width=0.7mm] (-7,-4) \pp\pp\pp;
      \draw [-stealth] (3,-4) -- (9,0) node [midway, xshift=7mm, yshift=-5mm]
      {$(-m,N-m)$};
      \draw [-stealth] (-6,3.5) -- ++(1,-1) node [midway, xshift=7mm] {$(1,1)$};
      \draw [-stealth] (-6,6) -- ++(1,0) node [midway, xshift=7mm] {$(0,1)$};
    \end{tikzpicture}
  \end{center}
\end{figure}

\begin{proof}[Proof of \Theorem{qdiag-A}]
  Since $[X^{(1^m)}]^{\star(N-m-k)} \star [X^{(1^m)}]^{\star k} = \mP$, it
  follows from \Theorem{qdiag-seidel} that $\coeff(\mE, q^d [X^{(1^m)}]^{\star
  k})$ is equal to the number of Schubert classes $[X^\la]$ for which
  \[
    q^{-d} [X^{(1^m)}]^{\star(N-m-k)} \star [X^\la] = [X^\la] \,.
  \]
  Since multiplication by $q^{-d}$ corresponds to translation of paths by the
  vector $(-d,-d)$, and multiplication by $[X^{(1^m)}]^{\star(N-m-k)}$
  corresponds to translation by $(0,N-m-k)$, we deduce that $\coeff(\mE, q^d
  [X^{(1^m)}]^{\star k})$ is equal to the number of paths in the plane that go
  through the upper-right corner of $\cP_X$ and are invariant under translation
  by both of the vectors $(-d,N-m-k-d)$ and $(-m,N-m)$. Notice that the
  constraint $Nd + mk = m(N-m)$ says that these vectors are parallel. We
  therefore must count the number of paths through the upper-right corner of
  $\cP_X$ that are invariant under translation by the greatest common divisor of
  the two vectors, which is the vector $(-c,(N-m)c/m)$, where $c = \gcd(d,m)$.
  Since such a path is determined by the first $cN/m$ steps, and exactly $c$ of
  these steps must be vertical, there are $\binom{cN/m}{c}$ such paths.
\end{proof}

\subsection{Lagrangian Grassmannians}

Let $X = \LG(N,2N)$ be the Lagrangian Grassmannian of maximal isotropic
subspaces in a symplectic vector space of dimension $2N$ over $\C$. The quantum
cohomology ring $\QH(X)$ was computed in \cite{kresch.tamvakis:quantum}, and
elementary proofs can be found in \cite{buch.kresch.ea:gromov-witten}. We have
$\dim(X) = \frac{1}{2}N(N+1)$, $\deg(q) = 2N+2$, and $\mP^{\star 2} = q^N$ in
$\QH(X)$. Since the only Seidel classes of $X$ are $1$ and $\mP$, all
coefficients of $\mE$ at Seidel classes are given by the following result.

\begin{thm}
  Let $n,d \geq 0$ satisfy $2d = nN$. Then,
  \[
    \vTev^{\LG(N,2N)}_{1,d,n} = \begin{cases}
      \coeff(\mE, \mP) = 2^N & \text{if $n$ is even;} \\
      \coeff(\mE, q^{N/2}) = 2^{N/2} & \text{if $n$ is odd.}
    \end{cases}
  \]
\end{thm}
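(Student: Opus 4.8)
The plan is to compute $\coeff(\mE, q^e[X^v])$ for the two Seidel classes $1$ and $\mP$ of $\LG(N,2N)$ by applying \Theorem{qdiag-seidel}, and then translate the result into virtual Tevelev degrees via \Corollary{vtev-modulo}. Concretely, if $2d = nN$ and we set $k = 1-n$, then $q^d\mP^{\star(1-n)}$ equals either $\mP$ (when $n$ is even) or $q^{N/2}\cdot 1$ (when $n$ is odd), using the relation $\mP^{\star 2} = q^N$ together with $\deg(q) = 2N+2$ to check that the power of $q$ comes out right and that the resulting element is genuinely in the Seidel subgroup generated by $\mP$. So the two cases in the theorem correspond exactly to evaluating $\coeff(\mE, \mP)$ and $\coeff(\mE, q^{N/2})$.

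For the case $n$ even, \Theorem{qdiag-seidel} with $[X_v] = \mP$ gives $\coeff(\mE, \mP) = \#\{u \in W^X \mid \mP\star[X^u] = [X^u]\}$. Since quantum multiplication by $\mP$ raises $\C$-degree by $\dim X$ and adds a factor whose $q$-degree is controlled, the equation $\mP\star[X^u] = [X^u]$ (no $q$) forces $\ell(u)$ to sit in the ``middle'' range; in fact, because $\mP$ is a Seidel class of order $2$ with $\mP^{\star 2} = q^N$, the map $[X^u] \mapsto \mP\star[X^u]$ pairs up Schubert classes, and the fixed points (those landing back on themselves with no $q$) are counted by the self-dual Schubert classes $[X^u]$ with $[X_u] = [X^u]$ — equivalently a combinatorial count of strict partitions fitting in the staircase that are stable under the Seidel involution. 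The count should come out to $2^N$; I would verify this either by the explicit description of the Seidel involution on $W^{\LG}$ from \cite{buch.kresch.ea:gromov-witten} or by noting that $\coeff(\mE,\mP)$ is the classical Euler characteristic $\chi(\LG(N,2N)) = 2^N$ (the number of Schubert classes, i.e.\ strict partitions in the staircase $(N, N-1,\dots,1)$), since the $q$-corrections to $\mE$ at the point class vanish by degree reasons. For $n$ odd, $\coeff(\mE, q^{N/2}) = \#\{u \in W^X \mid \mP\star[X^u] = q^{N/2}[X^u]\}$, and here I expect the fixed-point set to be cut out by an additional symmetry constraint halving the exponent count, yielding $2^{N/2}$; note this forces $N$ even for the case to be nonvacuous, consistent with $2d = nN$ and $n$ odd requiring $N$ even.

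The main obstacle is the $n$-odd computation: proving $\coeff(\mE, q^{N/2}) = 2^{N/2}$ requires understanding precisely which Schubert classes $[X^u]$ satisfy $\mP\star[X^u] = q^{N/2}[X^u]$ in $\QH(\LG(N,2N))$. Unlike the Grassmannian case, there is no cylindrical/path model ready to hand, so I would instead use the explicit quantum Pieri or quantum Chevalley rule of \cite{kresch.tamvakis:quantum, buch.kresch.ea:gromov-witten} to describe multiplication by $\mP$ on the Schubert basis, identify it with an explicit involution-like bijection on strict partitions in the staircase, and count its ``$q^{N/2}$-eigenvectors.'' An alternative, cleaner route would be to exploit the relation $\mP^{\star 2} = q^N$ directly: writing $\mE = \coeff(\mE,\mP)\,\mP + (\text{other Seidel terms}) + (\text{non-Seidel terms})$ and using that only the Seidel-class coefficients matter for $\vTev_{1,d,n}$, one reduces to determining the two scalars $\coeff(\mE,\mP)$ and $\coeff(\mE, q^{N/2})$, the first being $\chi(X) = 2^N$ and the second obtainable from a single localized computation of $\mE$ modulo $\langle q - 1\rangle$ restricted to the order-$2$ subgroup $\{1, \mP, q^{N/2}, \dots\}$ — here the square-root behavior $2^{N/2} = \sqrt{2^N}$ strongly suggests $\mE$ acts on this cyclic group like a ``Gauss-sum'' average, which I would make precise and check against small cases $N = 2, 4$.
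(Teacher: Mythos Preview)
Your overall strategy --- reduce to \Theorem{qdiag-seidel} and compute the two coefficients $\coeff(\mE,\mP)$ and $\coeff(\mE,q^{N/2})$ --- is exactly what the paper does, and your handling of the even case via $\coeff(\mE,\mP)=\chi(X)=|W^X|=2^N$ is correct and matches the paper's one-line argument. (Note, however, that your first application of \Theorem{qdiag-seidel} is garbled: for $\coeff(\mE,\mP)$ one has $[X^v]=\mP$, hence $v=w_0^X$ and $[X_v]=1$, so the count is $\#\{u\mid 1\star[X^u]=[X^u]\}=|W^X|$, not a count of ``self-dual'' classes under $\mP\star$. Your Euler-characteristic salvage is the right argument.)

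The genuine gap is the odd case. Your assertion that ``there is no cylindrical/path model ready to hand'' for $\LG(N,2N)$ is precisely what the paper contradicts: it invokes a generalized Postnikov cylinder for $\LG$ constructed in \cite{buch.chaput.ea:positivity}, where Schubert classes correspond to order ideals in the triangular poset $\cP_X=\{(i,j):1\le i\le j\le N\}$, multiplication by $q$ shifts borders diagonally, and multiplication by $\mP$ acts by reflecting the ideal across the anti-diagonal and reattaching. With this model the condition $q^{-N/2}\mP\star[X^\la]=[X^\la]$ becomes exactly the requirement that the border path from the upper-right corner of $\cP_X$ to the midpoint of its south-west edge be symmetric under the diagonal reflection; such a path is determined by its first $N/2$ steps, giving $2^{N/2}$. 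Your proposed alternatives are not yet proofs: the quantum Pieri route would in effect have to rediscover this involution on strict partitions and count its fixed points, and the ``Gauss-sum'' observation $2^{N/2}=\sqrt{2^N}$ together with checking $N=2,4$ is a heuristic, not an argument.
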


\begin{figure}[h]
  \caption{The partially ordered set $\wh\cP_X$ for $X = \LG(6,12)$, with the
    boxes of $\cP_X$ colored gray.}
  \label{fig:lgcyl}
  \begin{center}
    \begin{tikzpicture}[x=4mm,y=4mm]
      \def\zz{-- ++(0,-1) -- ++(1,0)}
      \draw [fill=gray!30] (6,0) -- (0,0) \zz\zz\zz\zz\zz\zz -- cycle;
      \draw (-2,2) \zz\zz\zz\zz\zz\zz\zz\zz\zz -- ++(0,-1);
      \draw (5,2) \zz\zz\zz\zz\zz\zz\zz\zz\zz -- ++(0,-1);
      \foreach \i in {1,...,5} \draw (\i,0) -- (\i,-\i) -- (6,-\i);
    \end{tikzpicture}
  \end{center}
\end{figure}
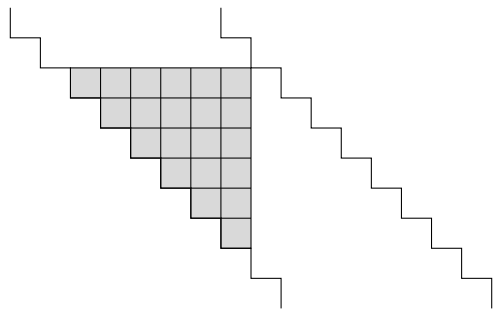

We use a generalization of Postnikov's cylinder constructed in
\cite{buch.chaput.ea:positivity}. The Schubert classes of $\LG(N,2N)$ correspond
to order ideals in a triangular set of boxes $\cP_X = \{ (i,j) \in \Z^2 \mid 1
\leq i \leq j \leq N \}$, where the row number $i$ increases from top to bottom,
and the column number $j$ increases from left to right. The partial order is the
north-west to south-east order, defined by $(i',j') \leq (i'',j'')$ if and only
if $i' \leq i''$ and $j' \leq j''$. The elements of $\cB$ correspond to
(non-empty, proper, lower) order ideals in the larger set $\wh\cP_X = \{ (i,j)
\in \Z^2 \mid i \leq j \leq i+N \}$, see \Figure{lgcyl}. Multiplication by $q$
moves the border of an order ideal one unit diagonally in south-east direction.
Given an order ideal $\la \subset \cP_X$, the order ideal of $\mP \star [X^\la]$
is obtained by reflecting $\la$ in a diagonal and attaching it to the right side
of $\cP_X$, see \Figure{lgpoint}.

\begin{proof}
  If $n$ is even, the formula states that $H^*(X)$ has rank $2^N$. Assume that
  $n$ is odd. Then $N$ must be even, and $\coeff(\mE, q^{N/2})$ is the number of
  order ideals $\la \subset \cP_X$ for which $q^{-N/2} \mP \star [X^\la] =
  [X^\la]$. This holds if and only if the border of $\la$ is a path from the
  upper-right corner of $\cP_X$ to the middle point on the south-west side of
  $\cP_X$, and this path must be symmetric under reflections in the diagonal,
  see \Figure{lgsym}. The path is therefore determined by the first $N/2$ steps,
  so there are $2^{N/2}$ such paths.
\end{proof}

\begin{figure}[h]
  \caption{The order ideal of $\mP \star [X^\la]$ is obtained from the order
    ideal of $[X^\la]$ as follows, for $\la = (4,1)$.}
  \label{fig:lgpoint}
  \begin{center}
    \begin{tikzpicture}[x=4mm,y=4mm]
      \def\zz{-- ++(0,-1) -- ++(1,0)}
      \draw (0,0) \zz\zz\zz\zz\zz\zz -- ++(0,6) -- cycle;
      \draw [line width=.7mm] (2,-2) -- (2,-1) -- (4,-1) -- (4,0) -- (6,0);
      \draw (8,2) \zz\zz\zz\zz\zz\zz\zz\zz\zz -- ++(0,-1);
      \draw (15,2) \zz\zz\zz\zz\zz\zz\zz\zz\zz -- ++(0,-1);
      \draw (10,0) -- ++(6,0) -- ++(0,-6);
      \draw [line width=.7mm] (16,-6) -- ++(0,2) -- ++(1,0) -- ++(0,2) -- ++(1,0);
    \end{tikzpicture}
  \end{center}
\end{figure}
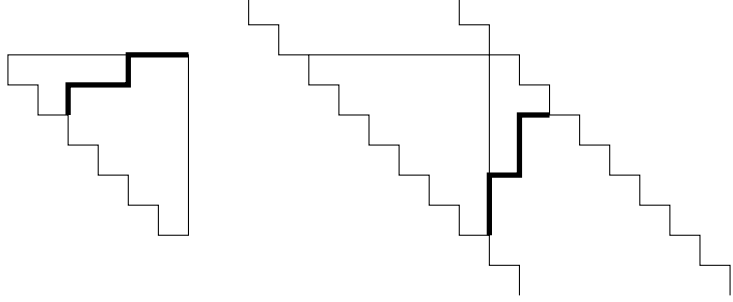

\begin{figure}[h]
  \caption{The border of the order ideal of $[X^{(4,2,1)}]$ is symmetric under
    reflection in the diagonal. Equivalently, $q^{-N/2} \mP \star
    [X^{(4,2,1)}] = [X^{(4,2,1)}]$ in $\QH(X)$.}
  \label{fig:lgsym}
  \begin{center}
    \begin{tikzpicture}[x=4mm,y=4mm]
      \def\zz{-- ++(0,-1) -- ++(1,0)}
      \draw (0,0) \zz\zz\zz\zz\zz\zz -- ++(0,6) -- cycle;
      \draw [line width=.7mm] (3,-3) -- (3,-1) -- (4,-1) -- (4,0) -- (6,0);
      \draw (8,2) \zz\zz\zz\zz\zz\zz\zz\zz\zz -- ++(0,-1);
      \draw (15,2) \zz\zz\zz\zz\zz\zz\zz\zz\zz -- ++(0,-1);
      \draw (10,0) -- ++(6,0) -- ++(0,-6);
      \draw [line width=.7mm,gray]
            (13,-3) -- ++(0,2) -- ++(1,0) -- ++(0,1) -- ++(2,0);
      \draw [line width=.7mm]
            (16,-6) -- ++(0,2) -- ++(1,0) -- ++(0,1) -- ++(2,0);
    \end{tikzpicture}
  \end{center}
\end{figure}
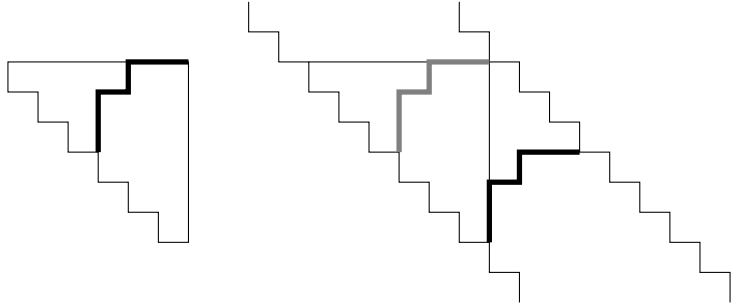

\subsection{Maximal orthogonal Grassmannians}

Let $X = \OG(N,2N)$ be the Grassmannian parametrizing (one component of) maximal
isotropic subspaces in an orthogonal complex vector space of dimension $2N$. The
quantum cohomology ring $\QH(X)$ was computed in
\cite{kresch.tamvakis:quantum*1}, elementary proofs can be found in
\cite{buch.kresch.ea:gromov-witten}. We have $\dim(X) = \frac{1}{2}N(N-1)$ and
$\deg(q) = 4N-4$. The Seidel classes{\footnote{We use a quotient $\mA/\mB$ of
cohomology classes only if $\mB$ is invertible in the localized quantum
cohomology ring, in which case it should be interpreted as $\mA/\mB = \mA \star
\mB^{\star(-1)}$.}} of $X$ are $1$, $\mP$, $[X^{N-1}]$, and $\mP / [X^{N-1}]$.
Here $X^{N-1} \cong \OG(N-1,2N-2)$ denotes the Schubert variety of maximal
isotropic subspaces that contain a fixed isotropic vector. We have
$[X^{N-1}]^{\star 2} = q$ and $\mP^{\star 4} = q^N$. If $N$ is even, we
furthermore have $\mP^{\star 2} = q^{N/2}$. Let $\mE_{q=1}$ denote the image of
$\mE$ in $\QH(X)/\langle q-1\rangle$.

\begin{thm}
  Let $X = \OG(N,2N)$. We have
  \[
    \begin{split}
      \coeff(\mE_{q=1}, \mP) &= 2^{N-1}, \\
      \coeff(\mE_{q=1}, 1) &= \begin{cases}
        \coeff(\mE, q^{N/4}) = 2^{N/2} & \text{if $N \equiv 0$ (mod 4),} \\
        0 & \text{otherwise,}
      \end{cases} \\
      \coeff(\mE_{q=1}, [X^{N-1}]) &= \begin{cases}
        \coeff(\mE, q^{\frac{N-2}{4}} [X^{N-1}]) = 2^{N/2} &
        \text{if $N \equiv 2$ (mod 4),} \\
        0 & \text{otherwise, and}
      \end{cases} \\
      \coeff(\mE_{q=1}, \mP/[X^{N-1}]) & = 0 \,.
    \end{split}
  \]
\end{thm}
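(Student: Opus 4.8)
The plan is to apply \Theorem{qdiag-seidel} to each of the four Seidel classes $1$, $\mP$, $[X^{N-1}]$, $\mP/[X^{N-1}]$, in parallel with the $\LG(N,2N)$ computation above. Since the images of the Schubert classes $[X^u]$ ($u\in W^X$) already form a $\Q$-basis of $\QH(X)/\langle q-1\rangle$, the only elements of $\cB$ mapping to the image of a fixed Seidel class $[X^v]$ are the monomials $q^d[X^v]$; hence $\coeff(\mE_{q=1},[X^v])=\sum_d\coeff(\mE,q^d[X^v])$. To invoke \Theorem{qdiag-seidel} for $[X^v]$ one needs the Poincar\'e dual class $[X_v]$ to be Seidel as well, which holds because $[X_v]\star[X^v]=\mP$ (part (b) of the Lemma on Seidel classes above). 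In fact the set $\{1,\mP,[X^{N-1}],\mP/[X^{N-1}]\}$ is closed under Poincar\'e duality: the dual of $\mP=[X^{w_0^X}]$ is $1$, the dual of $1=[X^e]$ is the point class $\mP$, the dual of $[X^{N-1}]$ is $\mP/[X^{N-1}]$, and the dual of $\mP/[X^{N-1}]$ is $[X^{N-1}]$. Finally, since $\QH(X)$ is graded with $\deg q=4N-4$, the relation $[X_v]\star[X^u]=q^d[X^u]$ forces $\deg[X_v]=(4N-4)d$, so for each $[X^v]$ at most one value of $d$ contributes to the sum above.

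Three of the four identities then follow at once. For $[X^v]=\mP$ the dual is $[X_v]=1$, so $\coeff(\mE,q^d\mP)=\#\{u:[X^u]=q^d[X^u]\}$, which is $\#W^X$ when $d=0$ and $0$ otherwise; hence $\coeff(\mE_{q=1},\mP)=\#W^X=\dim H^*(\OG(N,2N))=2^{N-1}$. For $[X^v]=\mP/[X^{N-1}]$ the dual is $[X_v]=[X^{N-1}]$, of cohomological degree $2N-2$, which is not an integer multiple of $\deg q=4N-4$; so $\coeff(\mE,q^d\,\mP/[X^{N-1}])=0$ for all $d$, and $\coeff(\mE_{q=1},\mP/[X^{N-1}])=0$. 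For $[X^v]=[X^{N-1}]$ the dual $[X_v]=\mP/[X^{N-1}]$ has degree $(N-1)(N-2)$, forcing $d=(N-2)/4$, an integer only if $N\equiv 2\pmod 4$; so $\coeff(\mE_{q=1},[X^{N-1}])$ vanishes unless $N\equiv 2\pmod 4$, in which case it equals $\coeff(\mE,q^{(N-2)/4}[X^{N-1}])=\#\{u:(\mP/[X^{N-1}])\star[X^u]=q^{(N-2)/4}[X^u]\}$. Likewise, for $[X^v]=1$ the dual $[X_v]=\mP$ has degree $N(N-1)$, forcing $d=N/4$, an integer only if $N\equiv 0\pmod 4$; so $\coeff(\mE_{q=1},1)$ vanishes unless $N\equiv 0\pmod 4$, in which case it equals $\coeff(\mE,q^{N/4})=\#\{u:\mP\star[X^u]=q^{N/4}[X^u]\}$.

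It then remains to prove that the two surviving counts both equal $2^{N/2}$. Here I would use the cylindrical combinatorial model for $\OG(N,2N)$ from \cite{buch.chaput.ea:positivity} in the same way as for $\LG(N,2N)$: Schubert classes correspond to order ideals of a triangular shifted poset $\cP_X$ inside a periodic poset $\wh\cP_X$ (cf.\ \Figure{lgcyl}), quantum multiplication by $q$ is a diagonal translation of the border path, multiplication by $\mP$ is the ``reflect the part of $\la$ sticking out of $\cP_X$ and reattach'' operation (cf.\ \Figure{lgpoint}), and multiplication by $[X^{N-1}]$ is the corresponding half-translation (legitimate because $[X^{N-1}]^{\star 2}=q$). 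In both surviving cases $N$ is even, so $\mP\star-$ and $(\mP/[X^{N-1}])\star-$ are involutions, and the order ideals they fix are exactly those whose border path, from the upper-right corner of $\cP_X$ to the midpoint of the opposite side, is symmetric under the relevant reflection in a (possibly shifted) diagonal (cf.\ \Figure{lgsym}); such a path is determined by $N/2$ of its unit steps, each independently horizontal or vertical, giving $2^{N/2}$ of them.

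The hard part is this last, combinatorial step: transporting the $\LG$-style cylindrical model to $\OG(N,2N)$ and, in particular, pinning down the operation attached to $[X^{N-1}]$, which is a ``square root of $q$'' rather than an honest translation, and then checking in each of the congruence classes $N\equiv 0,2\pmod 4$ that the symmetric border paths are governed by exactly $N/2$ binary choices. I would cross-check the combinatorics against the explicit presentation of $\QH(\OG(N,2N))$ in \cite{kresch.tamvakis:quantum*1} and the elementary treatment in \cite{buch.kresch.ea:gromov-witten} --- especially the relations $[X^{N-1}]^{\star 2}=q$, $\mP^{\star 4}=q^N$, and $\mP^{\star 2}=q^{N/2}$ for $N$ even --- since it is through these that the parity of $N$ controls the answer.
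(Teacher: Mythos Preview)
Your outline matches the paper's proof in structure: the easy identities are handled exactly as you say, and the two nontrivial counts are established via the cylindrical model of \cite{buch.chaput.ea:positivity} together with \Theorem{qdiag-seidel}.

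Two points resolve the uncertainty you flag at the end. First, the two surviving cases are the \emph{same} computation: since $[X^{N-1}]^{\star 2}=q$, both $q^{N/4}$ (for $N\equiv 0$) and $q^{(N-2)/4}[X^{N-1}]$ (for $N\equiv 2$) equal $[X^{N-1}]^{\star N/2}$, so it suffices to prove $\coeff(\mE,[X^{N-1}]^{\star N/2})=2^{N/2}$ for $N$ even. Second, in the $\OG$ model the natural unit is $[X^{N-1}]$, not $q$: one takes $\cP_X=\{(i,j):1\le i\le j\le N-1\}$ (one row smaller than in the $\LG$ case), multiplication by $[X^{N-1}]$ is the one-step diagonal translation of borders, and multiplication by $\mP$ reflects and reattaches the shape one row lower than for $\LG$. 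With this in hand, the fixed points of $[X^{N-1}]^{\star(-N/2)}\star\mP$ are exactly the border paths from the upper-right corner of $\cP_X$ to the middle outer corner of its south-west side that are symmetric under the diagonal reflection, and there are $2^{N/2}$ of them --- so your ``$N/2$ binary choices'' heuristic is correct once the model is set up this way.
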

\begin{proof}
  The coefficient of $\mP$ is the topological Euler characteristic of $X$, and
  the coefficient of $\mP/[X^{N-1}]$ is zero since $\deg\, [X^{N-1}]$ is not a
  multiple of $\deg(q)$. The two remaining cases are equivalent to the identity
  \[
    \coeff(\mE, [X^{N-1}]^{\star N/2}) = 2^{N/2}
  \]
  when $N$ is even. The elements of $W^X$ can be identified with order ideals in
  \[
    \cP_X = \{ (i,j) \in \Z^2 \mid 1 \leq i \leq j \leq N-1 \} \,,
  \]
  and elements of $\cB$ correspond to order ideals in
  \[
    \wh\cP_X = \{ (i,j) \in \Z^2 \mid i \leq j \leq i+N-2 \} \,,
  \]
  see \cite{buch.chaput.ea:positivity} and \Figure{ogcyl}. Multiplication by
  $[X^{N-1}]$ is given by translating borders diagonally by one unit, and
  multiplication by a point is given by the same procedure as for Lagrangian
  Grassmannians, except the reflected shape will be attached to $\cP_X$ one row
  lower. The number $\coeff(\mE, [X^{N-1}]^{\star N/2})$ is equal to the number
  of order ideals $\la \subset \cP_X$ for which $[X^{N-1}]^{\star(-N/2)} \star
  \mP \star [X^\la] = [X^\la]$. This holds if and only if the border of $\la$ is
  a path from the upper-right corner of $\cP_X$ to the middle outer corner of
  the south-west side of $\cP_X$ that is symmetric under reflection in the
  diagonal. There are $2^{N/2}$ such paths.
\end{proof}

\begin{figure}[h]
  \caption{The partially ordered set $\wh\cP_X$ for $X = \OG(6,12)$, with the
    boxes of $\cP_X$ colored gray.}
  \label{fig:ogcyl}
  \begin{center}
    \begin{tikzpicture}[x=4mm,y=4mm]
      \def\zz{-- ++(0,-1) -- ++(1,0)}
      \draw [fill=gray!30] (5,0) -- (0,0) \zz\zz\zz\zz\zz -- cycle;
      \foreach \i in {1,...,4} \draw (\i,0) -- (\i,-\i) -- (5,-\i);
      \draw (-2,2) \zz\zz\zz\zz\zz\zz\zz\zz -- ++(0,-1);
      \draw (3,2) \zz\zz\zz\zz\zz\zz\zz\zz -- ++(0,-1);
    \end{tikzpicture}
  \end{center}
\end{figure}
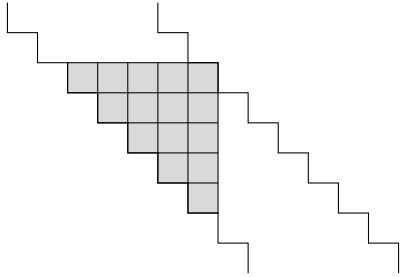

\begin{cor}
  Let $n,d \geq 0$ satisfy $4d = nN$. Then,
  \[
    \vTev^{\OG(N,2N)}_{1,d,n} = \begin{cases}
      \coeff(\mE,\mP) = 2^{N-1} & \text{if $n$ is even;} \\
      \coeff(\mE,q^{N/4}) = 2^{N/2} & \text{if $n$ is odd.}
    \end{cases}
  \]
\end{cor}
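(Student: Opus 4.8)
The plan is to deduce the Corollary from the preceding Theorem on the Seidel-class coefficients of $\mathsf{E}$ for $\OG(N,2N)$, combined with the genus-$1$ reduction formula $\vTev^{X}_{1,d,n} = \coeff(\mE, q^d\,\mP^{\star(1-n)})$, which holds for any cominuscule flag variety by \Corollary{vtev-modulo} with $g = 1$. The only substantive step is to identify the basis element $q^d\,\mP^{\star(1-n)}$ of $\QH(\OG(N,2N))_q$ under the dimension constraint \eqn{bbb}, which here reads $4d = nN$, and this is a short computation using the relations $\mP^{\star 4} = q^N$ (valid for all $N$) and $\mP^{\star 2} = q^{N/2}$ (valid for $N$ even) recorded above.

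First I would split on the parity of $n$. If $n$ is even, I claim $\mP^{\star n} = q^d$ in $\QH(X)$: when $4 \mid n$ this is $\mP^{\star n} = (\mP^{\star 4})^{\star n/4} = q^{nN/4} = q^d$, and when $n \equiv 2 \pmod 4$ the constraint $4 \mid nN$ forces $N$ even, so $\mP^{\star n} = (\mP^{\star 2})^{\star n/2} = q^{nN/4} = q^d$. Hence $q^d\,\mP^{\star(1-n)} = \mP^{\star n} \star \mP^{\star(1-n)} = \mP$, and the preceding Theorem gives $\vTev^{X}_{1,d,n} = \coeff(\mE, \mP) = 2^{N-1}$. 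If $n$ is odd, then $4 \mid nN$ forces $4 \mid N$; in particular $N$ is even, so $\mP^{\star(n-1)} = (\mP^{\star 2})^{\star (n-1)/2} = q^{N(n-1)/4} = q^{d - N/4}$ using $4d = nN$. Therefore $\mP^{\star(1-n)} = q^{N/4-d}$ in the localized ring, $q^d\,\mP^{\star(1-n)} = q^{N/4}$, and the preceding Theorem gives $\vTev^{X}_{1,d,n} = \coeff(\mE, q^{N/4}) = 2^{N/2}$.

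I expect no genuine obstacle here: the argument is entirely formal once the Theorem is in hand. The only points needing care are the integrality of the $q$-exponents that appear along the way — which is exactly what the congruences forced by $4d = nN$ guarantee — and the fact that the manipulations take place in the localized ring $\QH(X)_q$ but produce an honest basis element of the non-localized $\QH(X)$, so that the coefficient extracted is precisely the one computed in the Theorem. Since $\mathsf{E}$ is homogeneous of the degree of the point class, at most one power of $q$ can contribute to $\coeff(\mE, q^d\,\mP^{\star(1-n)})$, so pinning down $q^d\,\mP^{\star(1-n)}$ on the nose (not merely up to a power of $q$) is both necessary and sufficient, and no cancellation can occur.
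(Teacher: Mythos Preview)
Your proof is correct and follows essentially the same approach the paper takes for the analogous corollaries in types A and C: reduce to the genus-$1$ formula $\vTev^X_{1,d,n} = \coeff(\mE, q^d\,\mP^{\star(1-n)})$ and then identify $q^d\,\mP^{\star(1-n)}$ using the Seidel relations for $\mP$. The paper states this particular corollary without proof, so your parity analysis (including the observation that $n$ odd forces $4 \mid N$) is exactly the intended routine verification.
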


\subsection{Exceptional cominuscule flag varieties}

There are two exceptional cominuscule flag varieties, the Cayley plane $E_6/P_6$
and the Freudenthal variety $E_7/P_7$. The quantum cohomology of these spaces is
known from \cite{chaput.manivel.ea:quantum*1}. The following results account for
the virtual Tevelev degrees of genus 1, and use the standard ordering of the
simple roots:
\[
  \pic{.9}{dyn_e7}
\]
The quantum Euler classes were obtained using the Equivariant Schubert
Calculator \cite{buch:equivariant}. More general formulas for the virtual
Tevelev degrees of arbitrary genus are included among the examples in
\Section{furthercfvs}.

\begin{thm}
  Let $X = E_6/P_6$ be the Cayley plane. We have $\dim(X) = 16$ and $\mP^{\star
  3} = q^4$ in $\QH(X)$. For $n,d \geq 0$ satisfying $3d=4n$, we have
  \[
    \vTev^X_{1,d,n} = \coeff(\mE, \mP) = \chi(E_6/P_6) = 27 \,.
  \]
  The quantum Euler class of the Cayley plane is
  \[
    \mE = 27\, \mP +
    27\, q [X^{s_2 s_4 s_5 s_6}] + 45\, q [X^{s_3 s_4 s_5 s_6}] \,.
  \]
\end{thm}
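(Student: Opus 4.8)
The plan is to obtain the numerical statement purely from the cominuscule structure theory already in place, and to treat the explicit shape of $\mE$ as a finite computation in the known presentation of $\QH(E_6/P_6)$.

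First I would record the elementary input. The Cayley plane has $|W^X| = |W(E_6)|/|W(D_5)| = 51840/1920 = 27$ Schubert cells, so $\chi(X) = 27$ and $\dim X = \ell(w_0^X) = 16$. Its Fano index is $12$ (a standard fact, e.g.\ from \cite{chaput.manivel.ea:quantum*1}), so in the cohomological grading $\deg q = 24$ while $\deg \mP = 2\dim X = 32$. By \Theorem{seidel} applied with $Y = X$ itself, $\mP = [X^{w_0^X}]$ is a Seidel class of order $\ord(w_0^{E_6/P_6}) = 3$, hence $\mP^{\star 3} = q^{d_0}$ for some integer $d_0 \geq 1$; comparing degrees gives $3\cdot 32 = 24\,d_0$, so $d_0 = 4$ and $\mP^{\star 3} = q^4$.

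Next I would handle the genus-$1$ Tevelev degree. The dimension constraint \eqn{bbb} with $g=1$ reads $\int_d c_1(T_X) = 16n$, and since $\int_d c_1(T_X) = 12d$ this is exactly $3d = 4n$; in particular $3 \mid n$ and $d = 4n/3$. Using $\mP^{\star 3} = q^4$ we get $\mP^{\star n} = q^{4n/3} = q^d$, so $\mP^{\star n}\star[X^u] = q^d[X^u]$ for \emph{every} $u \in W^X$. By the genus-$1$ consequence of \Theorem{qdiag-seidel} (namely $\vTev^X_{1,d,n} = \#\{u \in W^X \mid \mP^{\star n}\star[X^u] = q^d[X^u]\}$, valid because $\mP$ is a Seidel class), this count is $|W^X| = \chi(X) = 27$. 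Equivalently $q^d\mP^{\star(1-n)} = \mP$, so $\vTev^X_{1,d,n} = \coeff(\mE,\mP)$, which is the classical part $\chi(X)$ of the quantum Euler class by the discussion following \Definition{defE}.

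Finally, for the explicit formula I would work in the presentation of $\QH(E_6/P_6)$ from \cite{chaput.manivel.ea:quantum*1} and first cut down by degree: $\mE$ is homogeneous of degree $32$, and a basis element $q^d[X^u]$ has degree $24d + 2\ell(u)$, so only $d=0$ with $\ell(u)=16$ (i.e.\ $[X^u]=\mP$) and $d=1$ with $\ell(u)=4$ can contribute. There are precisely two Schubert classes of length $4$ in $W^{E_6/P_6}$, namely $[X^{s_2 s_4 s_5 s_6}]$ and $[X^{s_3 s_4 s_5 s_6}]$, so $\mE = 27\,\mP + q\,(a\,[X^{s_2 s_4 s_5 s_6}] + b\,[X^{s_3 s_4 s_5 s_6}])$ with the coefficient of $\mP$ already pinned down. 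By \eqn{intint} the remaining coefficients are the genus-$0$ degree-$1$ sums $a = \sum_{u\in W^X}\gw{[X_u],[X^u],[X_{s_2 s_4 s_5 s_6}]}{1}$ and $b = \sum_{u\in W^X}\gw{[X_u],[X^u],[X_{s_3 s_4 s_5 s_6}]}{1}$; evaluating these in the quantum Schubert calculus of the Cayley plane with the Equivariant Schubert Calculator \cite{buch:equivariant} yields $a = 27$ and $b = 45$. The only genuinely non-formal step is this last pair of numbers, which rests on the full multiplication table of $\QH(E_6/P_6)$, including the part of the cohomology not pulled back from an ambient projective space; the main obstacle is therefore organizing (or citing) that computation, whereas the assertion $\vTev^X_{1,d,n} = 27$ requires none of it.
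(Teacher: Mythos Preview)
Your proposal is correct and follows essentially the same approach as the paper: the paper does not supply a separate proof for this theorem but simply notes that the quantum Euler class was obtained using the Equivariant Schubert Calculator, with the remaining assertions resting on the cominuscule machinery developed earlier. Your write-up is in fact more explicit than the paper's treatment, spelling out the degree bookkeeping for $\mP^{\star 3}=q^4$, the reduction of $\vTev^X_{1,d,n}$ to $\coeff(\mE,\mP)$ via the Seidel property, and the degree cut-down that isolates the two length-$4$ Schubert classes before invoking the calculator.
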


\begin{thm}
  Let $X = E_7/P_7$ be the Freudenthal variety. We have $\dim(X) = 27$ and
  $\mP^{\star 2} = q^3$ in $\QH(X)$. For $n,d \geq 0$ satisfying $2d=3n$, we
  have
  \[
    \vTev^X_{1,d,n} = \coeff(\mE, \mP) = \chi(E_7/P_7) = 56 \,.
  \]
  The quantum Euler class of the Freudenthal variety is
  \[
    \mE = 56\, \mP + 160\, q [X^{s_6 s_5 u}] + 272\, q [X^{s_5 s_1 u}]
    + 160\, q [X^{s_3 s_1 u}] \,,
  \]
  where $u = s_4 s_3 s_2 s_4 s_5 s_6 s_7$.
\end{thm}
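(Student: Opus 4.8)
The plan is to combine the Seidel-class structure of $X = E_7/P_7$ with \Corollary{vtev-modulo}. First I would record the relevant classical data: from the classification of cominuscule homogeneous spaces, $\dim(X) = 27$, and the Fano index of $E_7/P_7$ is $18$, so in the graded ring $\QH(X)$ one has $\deg(q) = 36$ and $\deg(\mP) = 2\dim(X) = 54$. By the table preceding \Corollary{vtev-modulo}, $\ord(w_0^X) = 2$, so \Theorem{seidel} shows that $\mP$ is a Seidel class of order $2$; hence $\mP^{\star 2} = q^{d'}$ for a unique effective $d'$, and matching degrees, $36\,d' = 2\cdot 54$, forces $d' = 3$, i.e. $\mP^{\star 2} = q^3$.

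Next I would evaluate the Tevelev degree. For $g = 1$ the dimension constraint \eqn{bbb} reads $\int_d c_1(T_X) = 27 n$, and since $\int_d c_1(T_X) = 18 d$ this says $2d = 3n$; in particular $n$ is even, say $n = 2m$ and $d = 3m$. Using $\mP^{\star 2} = q^3$, in $\QH(X)_q$ we get $\mP^{\star(1-n)} = \mP \star (\mP^{\star 2})^{\star(-m)} = q^{-3m}\,\mP$, hence $q^{d}\,\mP^{\star(1-n)} = \mP$. Therefore, by \Corollary{vtev-modulo},
\[
  \vTev^X_{1,d,n} = \coeff(\mE,\ q^{d}\,\mP^{\star(1-n)}) = \coeff(\mE,\mP) \,.
\]
The classical part of the quantum Euler class is $\chi(X)\,\mP$ (as noted just after \Definition{defE}), so $\coeff(\mE,\mP) = \chi(X)$; since the cohomology of $X$ sits entirely in even degrees, $\chi(X) = |W^X| = |W(E_7)|/|W(E_6)| = 56$. (Equivalently, $\mP^{\star n} = q^{3m} = q^d$, so by \Theorem{qdiag-seidel} applied to the trivial Seidel class the coefficient $\coeff(\mE,\mP)$ counts all $|W^X|$ Schubert classes.) This establishes the first two displayed formulas of the statement.

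Finally I would determine $\mE$ explicitly. Each summand $[X_u]\star[X^u]$ in $\mE = \sum_{u\in W^X}[X_u]\star[X^u]$ is homogeneous of degree $\deg[X_u] + \deg[X^u] = 2\dim(X) = 54$, so $\mE$ is homogeneous of degree $54$; the only basis elements of that degree are $\mP$ and the monomials $q\,[X^w]$ with $\ell(w) = 9$. The coefficient of $\mP$ is $\chi(X) = 56$ as above, and the remaining coefficients $\coeff(\mE, q[X^w])$ are genus-$0$, degree-$1$, two-point Gromov--Witten invariants of $X$. These are read off from the quantum cohomology ring of $E_7/P_7$, known by \cite{chaput.manivel.ea:quantum*1}; a direct computation with the Equivariant Schubert Calculator \cite{buch:equivariant} shows that they vanish except at the three codimension-$9$ Schubert classes $[X^{s_6 s_5 u}]$, $[X^{s_5 s_1 u}]$, $[X^{s_3 s_1 u}]$ with $u = s_4 s_3 s_2 s_4 s_5 s_6 s_7$, where they equal $160$, $272$, $160$ respectively.

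The main obstacle is precisely this last point: the part of $\QH(X)$ generated by the hyperplane (Seidel) class is easy to handle by hand, but the degree-$1$ corrections to $\mE$ involve the quantum products $[X_u]\star[X^u]$ for all $u \in W^X$, and hence the full list of codimension-$9$ Schubert structure constants of $E_7/P_7$, which is why a machine computation is invoked. Everything else --- $\dim(X) = 27$, the relation $\mP^{\star 2} = q^3$, and the value $\vTev^X_{1,d,n} = 56$ --- is a formal consequence of the Seidel-class results already established (\Theorem{seidel}, \Corollary{vtev-modulo}, \Theorem{qdiag-seidel}).
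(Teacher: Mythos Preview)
Your proposal is correct and matches the paper's own approach. The paper presents this theorem without a detailed proof, simply noting that the quantum Euler class was obtained with the Equivariant Schubert Calculator; your write-up spells out exactly the reductions the paper leaves implicit (degree matching to get $\mP^{\star 2}=q^3$, the evenness of $n$ forcing $q^d\mP^{\star(1-n)}=\mP$, and the identification $\coeff(\mE,\mP)=\chi(X)$), and then invokes the same machine computation for the $q$-corrections.
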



\section{Strange symmetry of $\mE/\mP$}\label{sec:strange}

\subsection{Overview}

Let $X = G/P_X$ be a cominuscule flag variety. Quantum multiplication by
$\mE/\mP$ preserves the degree grading of $\QH(X)_q$. Let
\[
  [\mE/\mP]_k : \QH(X)_{q,k} \rightarrow \QH(X)_{q,k}
\]
denote the restriction of quantum multiplication by $\mE/\mP$ to the degree $k$
subspace $\QH(X)_{q,k}\subset \QH(X)_{q}$. A {\em strange inner product} on
$\QH(X)_q$ which respects the degree grading is defined by the {\em strange
duality involution}
\[
  \iota : \QH(X)_q \to \QH(X)_q
\]
of \cite{postnikov:affine, chaput.manivel.ea:quantum}. We will prove that
quantum multiplication by $\mE/\mP$ in $\QH(X)_q$ is self-adjoint with respect
to the strange inner product and that {\em all} virtual Tevelev degrees of $X$
can be expressed in terms of the eigenvalues and eigenvectors of the degree 0
operator $[\mE/\mP]_0$.

\subsection{Strange duality}

Let $\al_0 \in \Phi$ denote the highest root. Given any simple root $\be \in
\Phi^S$, let $n_\be(\al_0)$ denote the coefficient of $\be$ obtained when
$\al_0$ is expanded in the basis of simple roots, and define
\[
  \epsilon(\be) = \begin{cases}
    1 & \text{if $\be$ is a long root,} \\
    -1 & \text{if $\be$ is a short root.}
  \end{cases}
\]
If the root system $\Phi$ is simply-laced, then all roots are long by
convention. Given a minimal representative $u \in W^X$ and a reduced expression
$u = s_{\be_1} s_{\be_2} \cdots s_{\be_\ell}$ we set
\[
  y(u) = \prod_{i=1}^\ell n_{\be_i}(\al_0)^{\epsilon(\be_i)} \,.
\]
Minimal representatives of cominuscule Schubert varieties are known to be fully
commutative \cite{stembridge:fully}, in the sense that any reduced expression
can be obtained from any other by interchanging commuting simple reflections. It
follows that the rational number $y(u)$ does not depend on the chosen reduced
expression of $u$. More generally, given any Weyl group element $w \in W$, we
set
\[
  y(w) = y(u)
\]
where $u \in W^X \cap w W_X$ is the unique minimal representative of the coset
$w W_X$. If $X = \Gr(m,N)$ is a Grassmannian of type A, then $y(w) = 1$ for all
$w \in W$.

Let $\delta(u)$ denote the minimal degree of a rational curve in $X$ through
$1.P_X$ and $u.P_X$. When $u \in W^X$, this is the number of occurrences of the
cominuscule simple root $\ga$ in any reduced expression of $u$
\cite[Prop.~18]{chaput.manivel.ea:quantum*1}, and by
\cite[Thm.~1]{chaput.manivel.ea:affine} we have
\begin{equation}\label{eqn:multP}
  \mP \star [X^u] = q^{\delta(u)} [X^{w_0^X u}] \,.
\end{equation}
Let $w_{0,X} \in W_X$ be the longest element in the Weyl group of $P_X$, and let
$w_0^X \in W^X$ be the minimal representative of the point class. Then $w_0^X
w_{0,X} = w_0$ is the longest element in $W$.

The involution $\iota$ of the following result is called the \emph{strange
duality involution}. It was constructed for Grassmannians of type A in
\cite[Thm.~6.5]{postnikov:affine} and generalized to all cominuscule flag
varieties in \cite[Thm.~1.1]{chaput.manivel.ea:quantum}.

\begin{thm}\label{thm:strange}
  Let $X$ be any cominuscule flag variety. There is a well-defined ring
  involution $\iota : \QH(X)_q \to \QH(X)_q$ given by
  \[
    \iota(q) = y(s_{\al_0})\, q^{-1}
    \ \ \ \ \ \text{and} \ \ \ \ \
    \iota [X^u] = y(u)\, q^{-\delta(u)}\, [X^{w_{0,X}u}] \,.
  \]
\end{thm}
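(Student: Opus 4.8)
The plan is to establish the three assertions implicit in the statement: that $\iota$ is a well-defined $\Q$-linear automorphism of $\QH(X)_q$, that $\iota$ is a ring homomorphism, and that $\iota^2=\id$. Well-definedness is the mildest point. The scalar $y(u)$ does not depend on the chosen reduced word for $u$ because minimal cominuscule representatives are fully commutative \cite{stembridge:fully}, as already noted. To interpret $[X^{w_{0,X}u}]$ one uses that left multiplication by $w_{0,X}$ permutes the cosets of $W/W_X$, so that the rule $u\mapsto\bar u$, where $\bar u\in W^X$ is the minimal representative of $w_{0,X}u\,W_X$, is a bijection of $W^X$; since $\bar e=e$ we get $\iota(1)=1$ for free. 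Consequently $\iota$ carries each basis vector $q^d[X^u]$ of $\cB$ to a nonzero scalar times the basis vector $q^{-d-\delta(u)}[X^{\bar u}]$, so $\iota$ is a $\Q$-linear automorphism of $\QH(X)_q$ which is $\Q[q^{\pm1}]$-semilinear for the ring automorphism $q\mapsto y(s_{\al_0})q^{-1}$.

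For $\iota^2=\id$, one applies $\iota$ twice and reads off what is needed: the claim about $q$ is automatic from semilinearity, and on Schubert classes the assertion amounts to the three identities $\bar{\bar u}=u$, $\delta(\bar u)=\delta(u)$, and $y(u)\,y(\bar u)=y(s_{\al_0})^{\delta(u)}$ in the combinatorics of cominuscule Weyl group elements. These can be checked directly from the definitions using full commutativity \cite{stembridge:fully}, \Theorem{seidel}, the relation $w_0=w_0^X w_{0,X}$, and the point-multiplication formula \eqn{multP} (which already packages $\delta$ together with a coset bijection $u\mapsto w_0^X u$); more transparently, $u\mapsto\bar u$ together with the companion power of $q$ is the combinatorial involution underlying Postnikov's cylinder model for $\Gr(m,N)$ \cite{postnikov:affine} and its analogues for the poset models of \cite{buch.chaput.ea:positivity} in the other classical cominuscule types, from which involutivity is immediate.

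The substantive point is multiplicativity, $\iota(\mA\star\mB)=\iota(\mA)\star\iota(\mB)$, which I would obtain from an explicit presentation of $\QH(X)_q$ over $\Q[q^{\pm1}]$ by the special Schubert classes: the Grassmannian case goes back to Bertram, with Whitney/Jacobi--Trudi relations and a single quantum correction \cite{bertram:quantum}; the Lagrangian and maximal orthogonal Grassmannians are treated in \cite{kresch.tamvakis:quantum, kresch.tamvakis:quantum*1} (see also \cite{buch.kresch.ea:gromov-witten}); and the Cayley plane and Freudenthal variety are covered in \cite{chaput.manivel.ea:quantum*1}. Using quantum Giambelli to express the images $\iota[X^v]$ of the generators again in terms of the generators, one produces a candidate $\Q$-algebra automorphism $\tilde\iota$ of the relevant polynomial ring sending $q$ to $y(s_{\al_0})q^{-1}$; the crux is to verify that $\tilde\iota$ preserves the ideal of relations, after which it descends to a ring automorphism of $\QH(X)_q$, and a compatibility of quantum Giambelli with the duality — itself a type-by-type check — shows that the descended map is given by the stated formula on all of $\QH(X)_q$, not merely on the generators. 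In type A this verification is the symmetry of the toric Schur function description of the quantum structure constants exploited in \cite{postnikov:affine}; the finitely many remaining cominuscule types are the case analysis of \cite{chaput.manivel.ea:quantum}. A more uniform route is to invoke the quantum-to-classical principle \cite{buch.kresch.ea:gromov-witten}, which equates cominuscule quantum structure constants with ordinary Schubert intersection numbers on an auxiliary two-step flag variety, and to transport a suitable symmetry of classical Schubert calculus back to $\QH(X)_q$ — though matching that symmetry with the uniform root-theoretic formula $\iota[X^u]=y(u)q^{-\delta(u)}[X^{w_{0,X}u}]$ still requires essentially the same bookkeeping.

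I expect the main obstacle to be precisely this last point: checking that the candidate automorphism of a presentation preserves the relation ideal. This is genuinely sensitive to the type, and the role of the weights $y(u)$ is exactly to package the long/short-root data so that the defining relations transform correctly — in the simply-laced cases all the relevant $n_\be(\al_0)$ equal $1$ and the bookkeeping collapses, which is why $y\equiv1$ for Grassmannians of type A. I would also note that the Frobenius/self-adjointness perspective taken up in the rest of \Section{strange}, while it explains why $\iota$ deserves to be called a duality, does not by itself yield multiplicativity; one really does need a presentation (or the quantum-to-classical reduction) to produce the ring automorphism. For the purposes of the present paper one may of course simply cite \cite[Thm.~6.5]{postnikov:affine} and \cite[Thm.~1.1]{chaput.manivel.ea:quantum}.
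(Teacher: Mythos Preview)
The paper does not prove this theorem at all: it is stated as a result quoted from the literature, with the sentence immediately preceding it attributing the construction to \cite[Thm.~6.5]{postnikov:affine} in type~A and to \cite[Thm.~1.1]{chaput.manivel.ea:quantum} in general. Your final sentence already says exactly this, so you have correctly identified the paper's ``proof''.

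Everything before that final sentence is therefore surplus to what the paper does, though as a sketch of what lies behind the cited references it is broadly accurate. A couple of minor cautions if you intend to flesh it out elsewhere: the identity $y(u)\,y(\bar u)=y(s_{\al_0})^{\delta(u)}$ you isolate for involutivity is the genuinely delicate step and is not quite ``checked directly from the definitions'' --- in \cite{chaput.manivel.ea:quantum} it comes out of a careful analysis of how conjugation by $w_{0,X}$ interacts with the root-height data, and your appeal to the poset models of \cite{buch.chaput.ea:positivity} handles the classical types but not the two exceptional ones uniformly. Likewise, the quantum-to-classical route you mention does give a conceptual explanation, but the published proof in \cite{chaput.manivel.ea:quantum} proceeds instead via a uniform Chevalley-type argument rather than a full type-by-type presentation check; your description of it as ``the case analysis of \cite{chaput.manivel.ea:quantum}'' somewhat undersells their approach.
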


Given $u \in W^X$ we denote the dual Weyl group element by $u^\vee = w_0 u
w_{0,X} \in W^X$. We have $X^{u^\vee} = w_0.X_u$ as subvarieties of $X$, in
particular $[X^{u^\vee}] = [X_u]$.

\begin{lemma}\label{lemma:strange}%
  For $u \in W^X$ we have $\iota [X^u] = y(u)\, [X_u]/\mP$.
\end{lemma}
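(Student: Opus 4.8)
The plan is to read $\iota[X^u]$ off from \Theorem{strange} and match it against $[X_u]/\mP$ using the point-multiplication formula \eqn{multP}. By \Theorem{strange} we have $\iota[X^u] = y(u)\,q^{-\delta(u)}\,[X^{w_{0,X}u}]$, and since $y(u)$ is a common factor it suffices to prove
\[
  [X^{w_{0,X}u}] = q^{\delta(u)}\,[X_u]/\mP = q^{\delta(u)}\,[X_u]\star\mP^{\star(-1)}
\]
in $\QH(X)_q$; here $\mP^{\star(-1)}$ exists since $\mP$ is a Seidel class of the cominuscule flag variety $X$ by \Theorem{seidel}.

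First I would use \eqn{multP} in inverse form: because $\mP\star[X^w] = q^{\delta(w)}[X^{w_0^X w}]$ for $w\in W^X$ and $\mP$ is invertible, we get $\mP^{\star(-1)}\star[X^{w'}] = q^{-\delta(w)}[X^w]$ whenever $w,w'\in W^X$ satisfy $w'W_X = w_0^X w\,W_X$. I would apply this with $w' = u^\vee$, so that $[X^{w'}] = [X^{u^\vee}] = [X_u]$; then $w$ is the minimal-length representative of $(w_0^X)^{-1}u^\vee W_X$. Using $u^\vee = w_0 u w_{0,X}$ and the identity $w_0 = w_0^X w_{0,X}$ one computes $(w_0^X)^{-1}u^\vee W_X = (w_0^X)^{-1}w_0\,u\,W_X = w_{0,X}\,u\,W_X$, so $w$ represents the same coset as $w_{0,X}u$ and hence $[X^w] = [X^{w_{0,X}u}]$. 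This gives $[X_u]/\mP = q^{-\delta(w)}[X^{w_{0,X}u}]$.

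The one genuine point — and the step I expect to be the main obstacle — is the equality $\delta(w) = \delta(u)$, where $w\in W^X$ represents the coset $w_{0,X}u\,W_X$. I would prove it geometrically: by the description recalled just before \eqn{multP}, $\delta$ of a coset is the minimal degree of a rational curve in $X$ through $1.P_X$ and the corresponding point, so it depends only on that point; since $w.P_X = w_{0,X}u.P_X$ and $w_{0,X}\in W_X\subset P_X$ fixes the base point $1.P_X$, the automorphism of $X$ induced by (a representative of) $w_{0,X}^{-1}$ sends the pair $(1.P_X,\,w.P_X)$ to $(1.P_X,\,u.P_X)$, and automorphisms preserve degrees of curve classes, whence $\delta(w)=\delta(u)$. (The same equality can alternatively be obtained combinatorially by comparing the degree gradings on the two sides of \eqn{multP}.) Substituting $\delta(w)=\delta(u)$ back yields $[X^{w_{0,X}u}] = q^{\delta(u)}[X_u]/\mP$, and then \Theorem{strange} gives $\iota[X^u] = y(u)\,q^{-\delta(u)}\cdot q^{\delta(u)}[X_u]/\mP = y(u)\,[X_u]/\mP$, as claimed. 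Everything apart from $\delta(w)=\delta(u)$ is bookkeeping with $W_X$-cosets and the relation $w_0 = w_0^X w_{0,X}$.
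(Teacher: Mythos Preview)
Your proof is correct and follows essentially the same route as the paper. The paper's argument is the one-line computation
\[
  \mP \star \iota[X^u]
  = y(u)\, q^{-\delta(u)}\, \mP \star [X^{w_{0,X}u}]
  = y(u)\, [X^{w_0^X w_{0,X} u}]
  = y(u)\, [X_u] \,,
\]
which is exactly your argument run in the other direction (multiplying by $\mP$ rather than dividing). The only substantive point---your equality $\delta(w_{0,X}u)=\delta(u)$---is used in the paper as well, just silently in the second equality above; your geometric justification via the $w_{0,X}^{-1}$-automorphism fixing $1.P_X$ is the natural one.
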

\begin{proof}
  By \Theorem{strange} and \eqn{multP} we have
  \[
    \mP \star \iota [X^u] = y(u)\, q^{-\delta(u)}\, \mP \star [X^{w_{0,X} u}]
    = y(u)\, [X^{w_0^X w_{0,X} u}] = y(u)\, [X_u] \,,
  \]
  which is equivalent to the Lemma.
\end{proof}

\begin{defn}\label{defn:innerprod}%
  For $\mA, \mB \in \QH(X)_q$, the {\em strange
  inner product} is
  \[
    (\mA, \mB) = \coeff(\mA \star \iota(\mB), 1) \,.
  \]
  The strange norm of $\mA$ is the real number $|\mA| = \sqrt{(\mA, \mA)}$.
\end{defn}

The strange inner product is the least strange in type A, where the Schubert
basis $\cB$ is an orthonormal basis of $\QH(X)_q$.

\begin{cor}\label{cor:innerprod}%
  The pairing $(-,-)$ of \Definition{innerprod} is an inner product on the
  $\Q$-vector space $\QH(X)_q$. The set $\cB = \{ q^d [X^u] \mid u \in W^X, d
  \in \Z \}$ is an orthogonal basis of $\QH(X)_q$, and the basis elements have
  norms given by
  \[
    |\,q^d [X^u]\,|^2 = y(s_{\al_0})^d\, y(u) \,.
  \]
\end{cor}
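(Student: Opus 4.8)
The plan is to deduce \Corollary{innerprod} from the ring involution of \Theorem{strange} together with \Lemma{strange}. First I would verify that $(-,-)$ is a symmetric bilinear form. Bilinearity over $\Q$ is immediate from $\Q$-bilinearity of the quantum product and of $\coeff(-,1)$ together with $\Q$-linearity of $\iota$. For symmetry, note that $\iota$ is a ring involution, so $\iota(\mA \star \iota(\mB)) = \iota(\mA) \star \mB$; since $\iota$ preserves the degree grading and sends $1$ to $1$, applying $\iota$ to an element does not change its coefficient of $1$ (because $\iota$ is a graded linear automorphism fixing the degree $0$, $q$-degree $0$ line spanned by $1$ — more precisely, $\iota$ permutes the basis $\cB$ up to scalars and the only basis element whose image is a scalar multiple of $1$ is $1$ itself). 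Hence $(\mA,\mB) = \coeff(\mA\star\iota\mB,1) = \coeff(\iota(\mA\star\iota\mB),1) = \coeff(\iota\mA \star \mB,1) = (\mB,\mA)$.

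Next I would show that $\cB$ is an orthogonal basis and compute the norms. Using \Lemma{strange}, for $u,v \in W^X$ and $d,e \in \Z$ we have
\[
  (q^d[X^u],\, q^e[X^v]) = \coeff\big(q^d[X^u] \star \iota(q^e[X^v]),\, 1\big)
  = y(s_{\al_0})^{-e}\, y(v)\, \coeff\big(q^{d-e}\, [X^u] \star [X_v] \star \mP^{\star(-1)},\, 1\big).
\]
Now $[X^u]\star[X_v] = \sum_{w,f} \gw{[X^u],[X_v],[X_w]}{f}\, q^f [X^w]$, and by the divisor/degree reasoning (or directly: $[X^u]\star[X_v]\star\mP^{\star(-1)}$ has a nonzero coefficient of $q^{e-d}$ only if that basis element is a $q$-power, which by the pairing $\int_X [X^u][X^v]=\delta_{u,v}$ forces $u=v$), one sees the coefficient of $1$ vanishes unless $u=v$ and $e=d$. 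When $u=v$ and $e=d$, the classical term $[X^u]\star[X_u] = \mP + (\text{higher }q)$ gives $[X^u]\star[X_u]\star\mP^{\star(-1)} = 1 + (\text{higher }q)$, so the coefficient of $1$ is $1$, yielding $|q^d[X^u]|^2 = y(s_{\al_0})^{-d}\, y(u)$. To match the stated formula $y(s_{\al_0})^d\, y(u)$ I would record that $y(s_{\al_0})^2 = 1$ — this holds because $\iota$ is an \emph{involution}, so $\iota^2(q) = y(s_{\al_0})\,\iota(q^{-1}) = y(s_{\al_0})^{-1} \iota(q) = y(s_{\al_0})^{-2} q = q$ forces $y(s_{\al_0})^2 = 1$; hence $y(s_{\al_0})^{-d} = y(s_{\al_0})^d$. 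Since each $y(u)$ is a product of positive integers and their reciprocals it is a positive rational, and $y(s_{\al_0}) = \pm 1$; when $y(s_{\al_0}) = -1$ one should check (case by case, or via the reflection $s_{\al_0}$) that the $d$-parity still yields a positive number, or simply note that positivity of the inner product will be forced by the next step. Finally, being an orthogonal basis with nonzero (indeed positive) norms, $(-,-)$ is nondegenerate; combined with symmetry and positivity of $|q^d[X^u]|^2$ on the basis, it is positive definite, i.e.\ an inner product on the $\Q$-vector space $\QH(X)_q$.

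The main obstacle I anticipate is the bookkeeping around the scalar $y(s_{\al_0})$: showing $y(s_{\al_0})^2=1$ is easy from involutivity, but establishing genuine \emph{positive}-definiteness requires knowing $y(u) > 0$ for all $u\in W^X$ and that no sign from $y(s_{\al_0})^d$ spoils this. The cleanest route is to observe that $y(u) = \prod n_{\be_i}(\al_0)^{\epsilon(\be_i)}$ is manifestly a positive rational (all $n_\be(\al_0)\ge 1$), and to treat the two values of $y(s_{\al_0})$ separately: in the simply-laced and type-$C$ cominuscule cases one checks $y(s_{\al_0})=1$ directly from the marked Dynkin diagrams, so the issue does not arise, while in any remaining case the norm formula still gives a positive number for every $d$ because the parity of $d$ is pinned down by the degree grading on each graded piece. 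A secondary, more routine obstacle is justifying the orthogonality claim $\coeff(q^{d-e}[X^u]\star[X_v]\star\mP^{\star(-1)},1)=0$ for $(u,d)\ne(v,e)$; I would handle this by expanding in $\cB$, using that $\mP$ is a Seidel class (so $\mP^{\star(-1)}\in\cB$ and multiplication by it permutes $\cB$ up to $q$-powers), and reducing to the classical intersection pairing $\int_X[X^u][X_v]=\delta_{u,v}$ on the degree-appropriate graded piece.
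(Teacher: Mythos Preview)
Your approach is the same as the paper's, but there are a few errors in the execution.

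First, a sign slip: since $\iota$ is a ring homomorphism, $\iota(q^e) = \iota(q)^e = (y(s_{\al_0})q^{-1})^e = y(s_{\al_0})^e q^{-e}$, not $y(s_{\al_0})^{-e}q^{-e}$. With the correct sign you obtain $|q^d[X^u]|^2 = y(s_{\al_0})^d\, y(u)$ directly, and the detour through ``$y(s_{\al_0})^2 = 1$'' is unnecessary. In fact your argument for that identity is wrong: $\iota^2(q) = y(s_{\al_0})\,\iota(q^{-1}) = y(s_{\al_0})\,(y(s_{\al_0})q^{-1})^{-1} = q$ holds automatically and imposes no constraint on $y(s_{\al_0})$.

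Second, the worry about positivity is unfounded. By definition $y(u) = \prod n_{\be_i}(\al_0)^{\epsilon(\be_i)}$ is a product of \emph{positive} rationals for every $u \in W^X$, and $y(s_{\al_0})$ is by definition one of these values (it is $y$ of the minimal representative of $s_{\al_0}W_X$); hence $y(s_{\al_0})^d\, y(u) > 0$ for all $d,u$, and the form is positive definite once orthogonality is established. No case analysis or parity argument is needed.

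Third, your orthogonality argument is incomplete as first stated: the classical pairing $\int_X [X^u][X_v] = \delta_{u,v}$ only controls the $q^0$ coefficient of $[X^u]\star[X_v]$, not the higher ones. The clean route, which you sketch in your ``secondary obstacle'' paragraph and which is exactly what the paper does, is to use that $\mP$ is a Seidel class, so $\coeff(A,1) = \coeff(A\star\mP,\mP)$, reducing to
\[
  \coeff\big([X^u]\star[X_v],\, q^{e-d}\mP\big)
  \;=\; \langle [X^u],[X_v],1\rangle^X_{0,\,e-d}\,,
\]
which vanishes for $e \neq d$ by the identity insertion axiom of Gromov--Witten theory and equals $\int_X [X^u]\cup[X_v] = \delta_{u,v}$ for $e=d$.
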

\begin{proof}
  Since $\iota(1) = 1$ and $\iota(q^d [X^u])$ is a non-zero multiple of an
  element of $\cB$ for all $q^d [X^u] \in \cB$, it follows that
  \[
    \coeff(\iota(A), 1) = \coeff(A, 1)
  \]
  for each $\mA \in \QH(X)_q$. This implies that the pairing $(-,-)$ is
  symmetric, and the pairing is bilinear by definition. For $u, v \in W^X$ and
  $d,e \in \Z$ we have
  \[
    \begin{split}
      (q^d [X^u], q^e [X^v])
      &= \coeff(q^d [X^u] \star \iota(q^e [X^v]) \star \mP, \mP) \\
      &= y(s_{\al_0})^e\, y(v)\, \coeff([X^u] \star [X_v], q^{e-d} \mP) \\
      &= y(s_{\al_0})^e\, y(v)\, \langle [X^u], [X_v], 1 \rangle^X_{0,e-d} \\
      &= y(s_{\al_0})^e\, y(v)\, \delta_{u,v}\, \delta_{d,e} \,.
    \end{split}
  \]
  This shows that $\cB$ is an orthogonal basis of $\QH(X)_q$ and reveals the
  norms of the basis elements.
\end{proof}

\begin{lemma}\label{lemma:symmetry}
  For $u \in W^X$ we have $y(u^\vee)\, y(u) = y(w_0^X)$.
\end{lemma}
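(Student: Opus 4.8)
The plan is to derive the identity formally from the strange duality involution $\iota$ of \Theorem{strange}, using only that $\iota^2=\id$ together with the normalization $\iota[X^u]=y(u)\,[X_u]/\mP$ of \Lemma{strange}. First I would assemble the relevant facts about the dual operation $u\mapsto u^\vee=w_0\,u\,w_{0,X}$ on $W^X$: it is an involution, so $(u^\vee)^\vee=u$; applied to $w_0^X$ it gives $(w_0^X)^\vee=w_0\,w_0^X\,w_{0,X}=w_0\,w_0=1$ by the relation $w_0^X w_{0,X}=w_0$; and therefore $[X_{w_0^X}]=[X^{(w_0^X)^\vee}]=[X^1]=1$, the multiplicative identity of $\QH(X)_q$. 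I also record that $\mP$ is invertible in $\QH(X)_q$, being a Seidel class by \Theorem{seidel}.

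Next, using \Lemma{strange} and the identity $[X_u]=[X^{u^\vee}]$, I would rewrite the needed values of $\iota$ as
\[
  \iota[X^u]=y(u)\,[X^{u^\vee}]\star\mP^{\star(-1)},\qquad
  \iota[X^{u^\vee}]=y(u^\vee)\,[X^u]\star\mP^{\star(-1)},\qquad
  \iota(\mP)=y(w_0^X)\,\mP^{\star(-1)},
\]
the last because $\iota(\mP)=\iota[X^{w_0^X}]=y(w_0^X)\,[X_{w_0^X}]\star\mP^{\star(-1)}=y(w_0^X)\,\mP^{\star(-1)}$; consequently $\iota(\mP)^{\star(-1)}=y(w_0^X)^{-1}\,\mP$. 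Here I use that $\iota$ is a $\Q$-algebra homomorphism, hence fixes the rational scalars $y(u),y(u^\vee),y(w_0^X)$ and commutes with taking $\star$-inverses of units.

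Finally I would apply $\iota$ to the first of these formulas and invoke $\iota^2=\id$:
\[
  [X^u]=\iota\bigl(y(u)\,[X^{u^\vee}]\star\mP^{\star(-1)}\bigr)
  =y(u)\,\iota[X^{u^\vee}]\star\iota(\mP)^{\star(-1)}
  =y(u)\,y(u^\vee)\,y(w_0^X)^{-1}\,[X^u]\star\mP^{\star(-1)}\star\mP \,,
\]
which equals $y(u)\,y(u^\vee)\,y(w_0^X)^{-1}\,[X^u]$. Since $[X^u]\neq 0$ in $\QH(X)_q$, cancelling it yields $y(u^\vee)\,y(u)=y(w_0^X)$. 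There is no real obstacle here: the genuine content lies in \Theorem{strange} and \Lemma{strange}, and the present statement is a one-line formal consequence once those are available; the only points needing care are the computation $(w_0^X)^\vee=1$ (equivalently $\iota(\mP)=y(w_0^X)\,\mP^{\star(-1)}$) and keeping the scalar factors $y(\cdot)\in\Q$ straight as they pass through the non-$\Q[q^{\pm1}]$-linear map $\iota$. Alternatively, one can argue combinatorially: for cominuscule $X$ the set $W^X$ is the lattice of order ideals in the minuscule poset $\cP_X$, the ideals of $u$ and of $u^\vee$ form complementary pieces of $\cP_X$ up to the order-reversing minuscule involution (whose action on simple-root labels is the diagram automorphism $-w_0$, fixing $\al_0$ and preserving root lengths), so the product of monomials $n_\bull(\al_0)^{\pm1}$ defining $y(u)$ times that defining $y(u^\vee)$ telescopes to the product over all of $\cP_X$, which is $y(w_0^X)$.
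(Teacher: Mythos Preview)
Your proof is correct and takes a genuinely different route from the paper's. The paper argues directly at the level of reduced expressions: given reduced words for $u$ and $u^\vee$, it verifies that $(w_0 u^\vee w_0)^{-1}u = w_0^X$, concatenates to obtain a reduced expression for $w_0^X$, and then observes that the simple-root labels in the $u^\vee$ part have been twisted by $-w_0$, which leaves each $n_\beta(\alpha_0)$ and each root length unchanged; the product factorization $y(w_0^X)=y(u^\vee)\,y(u)$ follows. Your main argument instead deduces the identity from $\iota^2=\id$ together with \Lemma{strange}: the scalar relation drops out by comparing $[X^u]$ with $\iota^2[X^u]$. This is slick and conceptually clarifying---it exhibits the lemma as the numerical shadow of the involution property---but it imports the full strange duality theorem of \cite{postnikov:affine,chaput.manivel.ea:quantum}, whereas the paper's proof is a short, self-contained computation in the Weyl group, independent of quantum cohomology. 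Your closing combinatorial sketch is essentially the paper's argument rephrased in terms of the minuscule poset.
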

\begin{proof}
  Let $u = s_{\be_1} s_{\be_2} \cdots s_{\be_\ell}$ and $u^\vee = s_{\al_1}
  s_{\al_2} \cdots s_{\al_k}$ be reduced expressions. We have
  \[
    (w_0 u^\vee w_0)^{-1} u =
    w_0 w_{0,X} u^{-1} w_0 w_0 u = w_0 w_{0,X} = w_0^X \,.
  \]
  Since $\ell(w_0^X) = \ell(u^\vee) + \ell(u)$, we deduce that
  \[
    w_0^X = s_{-w_0.\al_k} \cdots s_{-w_0.\al_1} s_{\be_1} \cdots s_{\be_\ell}
  \]
  is a reduced expression for $w_0^X$. The Lemma follows from this by observing
  that $n_\be(\al_0) = n_{-w_0.\be}(-w_0.\al_0) = n_{-w_0.\be}(\al_0)$ for any
  simple root $\be$.
\end{proof}

\begin{prop}\label{prop:symmetry}%
  We have $\iota(\mE/\mP) = \mE/\mP$.
\end{prop}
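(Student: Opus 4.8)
The plan is to show that $\mE/\mP$ is self-adjoint with respect to the strange inner product and then to combine this with the known expression $\mE = \sum_{u \in W^X} [X_u] \star [X^u]$ to identify $\iota(\mE/\mP)$ with $\mE/\mP$ itself. First I would record the self-adjointness of quantum multiplication by any class $\mA$ with respect to the pairing $(-,-)$: since $\iota$ is a ring involution and $(\mA \star \mB, \mC) = \coeff(\mA \star \mB \star \iota(\mC), 1)$, commutativity of the quantum product gives $(\mA \star \mB, \mC) = (\mB, \iota(\mA) \star \mC)$; hence multiplication by $\mA$ is adjoint to multiplication by $\iota(\mA)$. In particular, $\mE/\mP$ is self-adjoint precisely when $\iota(\mE/\mP) = \mE/\mP$, which is exactly the assertion of the Proposition, so the two formulations are equivalent and I would prove the latter directly.

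To compute $\iota(\mE/\mP)$ directly, I would use $\mE = \sum_{u \in W^X} [X_u] \star [X^u]$, together with $\mP \star \iota[X^u] = y(u)\,[X_u]$ from the proof of \Lemma{strange}, i.e.\ $\iota[X^u] = y(u)\,[X_u]/\mP$, and likewise $\iota[X_u] = \iota[X^{u^\vee}] = y(u^\vee)\,[X_{u^\vee}]/\mP = y(u^\vee)\,[X^u]/\mP$ (using $[X^{u^\vee}] = [X_u]$ and $[X_{u^\vee}] = [X^u]$). Since $\iota$ is a ring homomorphism and $\iota(\mP) = \iota[X^{w_0^X}] = y(w_0^X)\,[X_{w_0^X}]/\mP = y(w_0^X)\,q^{-\delta(w_0^X)}$ — note $w_0^X w_{0,X} = w_0$ so the relevant dual Schubert class is $[X^1] = 1$ — I get
\[
  \iota(\mE/\mP) = \frac{\iota(\mE)}{\iota(\mP)}
  = \frac{1}{\iota(\mP)} \sum_{u \in W^X} \iota[X_u] \star \iota[X^u]
  = \frac{1}{\iota(\mP)} \sum_{u \in W^X}
    \frac{y(u^\vee)\,[X^u]}{\mP} \star \frac{y(u)\,[X_u]}{\mP} \,.
\]
Now \Lemma{symmetry} gives $y(u^\vee)\,y(u) = y(w_0^X)$, which is independent of $u$, so the sum becomes $\frac{y(w_0^X)}{\iota(\mP)\,\mP^{\star 2}} \sum_{u} [X^u] \star [X_u] = \frac{y(w_0^X)}{\iota(\mP)\,\mP^{\star 2}}\,\mE$. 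It remains to check that the scalar $\frac{y(w_0^X)}{\iota(\mP)\,\mP^{\star 2}}$ equals $\mP^{\star(-1)}$, i.e.\ that $\iota(\mP)\,\mP = y(w_0^X)$ in $\QH(X)_q$; this follows by applying $\iota$ to the relation $\mP \star \iota(\mP) = y(w_0^X)\,q^{-\delta(w_0^X)}$ computed above, or more directly from $\mP\star \iota[X^{w_0^X}] = y(w_0^X)[X_{w_0^X}] = y(w_0^X)\cdot 1$ since $[X_{w_0^X}] = [X^1] = 1$. Putting this together yields $\iota(\mE/\mP) = \mP^{\star(-1)} \star \mE = \mE/\mP$.

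The one point requiring care — and the main obstacle — is the bookkeeping of the twisting factors $y(-)$ and the degrees $\delta(-)$ under $\iota$: I must be sure that $\iota$ is applied correctly to the \emph{product} $[X_u]\star[X^u]$ (using that it is a ring homomorphism, so no extra structure constants appear), that the identification $[X_u] = [X^{u^\vee}]$ together with $(u^\vee)^\vee = u$ is used consistently so that $\iota[X_u] = y(u^\vee)[X^u]/\mP$ with the correct index, and that $\mE$ is genuinely $\iota$-fixed up to the single scalar $y(w_0^X)/\bigl(\iota(\mP)\mP\bigr) = \mP^{\star(-1)}$. Everything else is the formal computation above, relying only on \Theorem{strange}, \Lemma{strange}, \Lemma{symmetry}, \Corollary{innerprod}, and the Schubert-basis formula \eqn{intint} for $\mE$.
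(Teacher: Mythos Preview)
Your proof is correct and follows essentially the same approach as the paper: apply \Lemma{strange} to each factor in $\mE = \sum_u [X_u]\star[X^u]$, use \Lemma{symmetry} to collapse $y(u^\vee)y(u)$ to $y(w_0^X)$, and cancel against $\iota(\mP)^{-1}$. The paper does this term-by-term on $[X_u]\star[X^u]/\mP$, while you separate $\iota(\mE)$ and $\iota(\mP)$, but the content is identical. One small slip: the aside ``$\iota(\mP) = y(w_0^X)\,q^{-\delta(w_0^X)}$'' is not correct in general (it would force $\mP = q^{\delta(w_0^X)}$, which already fails for $\bP^r$); fortunately you never use this form and instead rely on the correct identity $\mP\star\iota(\mP) = y(w_0^X)$, so the argument stands. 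The opening paragraph on self-adjointness is true but unnecessary, since you end up proving $\iota(\mE/\mP)=\mE/\mP$ directly anyway.
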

\begin{proof}
  For $u \in W^X$ we have by \Lemma{strange} and \Lemma{symmetry} that
  \[
    \iota([X_u] \star [X^u]/\mP)
    = y(u^\vee) [X^u]/\mP \star y(u) [X_u]/\mP \star y(w_0^X)^{-1} \mP
    = [X^u] \star [X_u]/\mP \,.
  \]
  The Proposition follows from this by taking the sum over $u \in W^X$.
\end{proof}

\subsection{Virtual Tevelev degrees}

Given any element $\mA \in \QH(X)_q$, we will abuse notation and identify $\mA$
with the linear endomorphism
\[
  \mA \star : \QH(X)_q \to \QH(X)_q
\]
defined by quantum multiplication by $\mA$. Statements about diagonalizability,
eigenvalues, and eigenvectors of $\mA$ should be interpreted in this sense. The
real numbers $\R$ can be replaced by an appropriate finite extension of $\Q$ in
the following result.

\begin{cor}\label{cor:diagonalize}%
  The vector space $\QH(X)_q \otimes \R$, endowed with the strange inner product
  $(-,-)$, has an orthogonal basis consisting of eigenvectors of $\mE/\mP$.
\end{cor}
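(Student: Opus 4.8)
The plan is to deduce the statement from the real spectral theorem, once quantum multiplication by $\mE/\mP$ is known to be self-adjoint for the strange inner product. First I would record the setup. Since $X$ is cominuscule, the point class $\mP$ is a Seidel class by \Theorem{seidel}, hence invertible in $\QH(X)_q$, so $\mE/\mP = \mE \star \mP^{\star(-1)}$ is a well-defined element of $\QH(X)_q$, defined over $\Q$, and quantum multiplication by it preserves the degree grading (as noted in the overview, giving the operators $[\mE/\mP]_k$). The strange pairing also respects the grading: by the computation in the proof of \Corollary{innerprod}, distinct elements of $\cB$ are orthogonal, so the graded pieces $\QH(X)_{q,k}$ are mutually orthogonal; each $\QH(X)_{q,k}$ is finite-dimensional because $q$ has positive degree while $\ell(u) \le \dim X$ is bounded; and the pairing is positive definite because $|q^d[X^u]|^2 = y(s_{\al_0})^d\, y(u)$ is a product of positive rationals by \Corollary{innerprod}.

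The key step is self-adjointness. For $\mA, \mB \in \QH(X)_q$,
\[
  (\mE/\mP \star \mA,\, \mB)
  = \coeff\big((\mE/\mP) \star \mA \star \iota(\mB),\, 1\big)
  = \coeff\big(\mA \star \iota(\mE/\mP) \star \iota(\mB),\, 1\big)
  = (\mA,\, \mE/\mP \star \mB)\,,
\]
where the first equality is \Definition{innerprod} together with associativity, the second uses commutativity of $\QH(X)_q$ and the identity $\iota(\mE/\mP) = \mE/\mP$ of \Proposition{symmetry}, and the third uses that $\iota$ is a ring homomorphism together with \Definition{innerprod}. Extending scalars to $\R$ is harmless since everything in sight is defined over $\Q$, so $\mE/\mP \star$ restricts to a self-adjoint endomorphism of each finite-dimensional real inner product space $\QH(X)_{q,k} \otimes \R$.

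It then remains to apply the real spectral theorem to each $\QH(X)_{q,k} \otimes \R$, obtaining an orthogonal basis of eigenvectors of $\mE/\mP$ there, and to take the union over all degrees $k$; since the graded pieces are mutually orthogonal, this union is an orthogonal eigenbasis of $\QH(X)_q \otimes \R$. I do not expect a genuine obstacle, as the entire content sits in \Proposition{symmetry}, which is already established; the only points needing a moment's care are positive-definiteness of the strange pairing and finite-dimensionality of its graded pieces, both immediate from the norm formula of \Corollary{innerprod}. The parenthetical remark that $\R$ may be replaced by a suitable finite extension of $\Q$ then follows because the characteristic polynomial of $\mE/\mP$ on each $\QH(X)_{q,k}$ has rational coefficients, so its eigenvalues — and, by the same spectral-theorem argument applied over that number field — a compatible orthogonal eigenbasis already live over a number field.
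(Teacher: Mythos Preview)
Your argument is correct and follows the same line as the paper: you establish self-adjointness of $\mE/\mP$ from $\iota(\mE/\mP)=\mE/\mP$ (\Proposition{symmetry}) and then invoke the spectral theorem on the finite-dimensional graded pieces, exactly as the paper does. You simply spell out more of the routine details (positive-definiteness, finite-dimensionality, the chain of equalities for self-adjointness) that the paper leaves implicit.
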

\begin{proof}
  For $\mA, \mB \in \QH(X)_q$, \Proposition{symmetry} shows that
  \[
    (\mE/\mP \star \mA\,,\, \mB) = (\mA\,,\, \mE/\mP \star \mB) \,,
  \]
  that is, the endomorphism $\mE/\mP$ is self-adjoint with respect to the inner
  product on $\QH(X)_q$. Since $\mE/\mP$ has degree zero in the graded ring
  $\QH(X)_q$, it preserves the homogeneous components $\QH(X)_{q,k}$ of this
  ring. Since these components have finite dimension, it follows that $\mE/\mP$
  is diagonalizable over $\R$ by an orthogonal basis.
\end{proof}

\begin{thm}\label{thm:vTev_comin}%
  Let $X$ be a cominuscule flag variety, and let
  \[
    \mA_1, \mA_2, \dots, \mA_k
  \]
  be any orthogonal basis of $\QH(X)_{q,0} \otimes \R$ with respect to the
  strange inner product, consisting of eigenvectors of $[\mE/\mP]_0$, and let
  \[
    \la_1, \la_2, \dots, \la_k \in \R
  \]
  be the corresponding eigenvalues. For all $g,d,n \geq 0$, we have
  \[
    \begin{split}
      \vTev^X_{g,d,n} \
      &= \ \coeff((\mE/\mP)^{\star g}, q^d \mP^{\star (1-n-g)}) \\
      &= \ \sum_{i=1}^k
      \frac{\coeff(\mA_i, 1) \coeff(\mA_i, q^d \mP^{\star(1-n-g)})}{|A_i|^2}\,
      \la_i^g
      \,.
    \end{split}
  \]
\end{thm}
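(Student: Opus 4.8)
The plan is to deduce the formula from \Theorem{qhred} together with the self-adjointness of $\mE/\mP$ established in \Proposition{symmetry} and the spectral decomposition of \Corollary{diagonalize}. First I would record the reformulation of the Tevelev degree coming from \Theorem{qhred}: since $\mP$ is a Seidel class on a cominuscule $X$, quantum multiplication by $\mP$ is invertible on $\QH(X)_q$ and permutes the basis $\cB$, so $\coeff(\mP^{\star n}\star\mE^{\star g}, q^d\mP) = \coeff(\mE^{\star g}, q^d\mP^{\star(1-n)}) = \coeff((\mE/\mP)^{\star g}\star\mP^{\star g}, q^d\mP^{\star(1-n)}) = \coeff((\mE/\mP)^{\star g}, q^d\mP^{\star(1-n-g)})$, which is the first displayed equality. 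The key point making the rest work is that $\mE/\mP$ has degree $0$, so $(\mE/\mP)^{\star g}$ is a degree-$0$ operator and we are really evaluating a pairing inside the finite-dimensional graded piece $\QH(X)_{q,0}$.

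Next I would express that coefficient as a strange inner product. Using \Corollary{innerprod} (orthogonality of $\cB$) and \Definition{innerprod}, for any $\mA\in\QH(X)_q$ one has $\coeff(\mA, \mB) = (\mA,\mB)/|\mB|^2$ for a basis element $\mB\in\cB$; more usefully, writing $q^d\mP^{\star(1-n-g)} = q^e[X^v]$ for the appropriate Seidel element, one gets $\coeff((\mE/\mP)^{\star g}, q^e[X^v]) = ((\mE/\mP)^{\star g}\star \mathbf 1,\, q^e[X^v])/|q^e[X^v]|^2$. But it is cleaner to write the right-hand side directly: expand $\mathbf 1 = \sum_i \frac{(\mathbf 1,\mA_i)}{|\mA_i|^2}\mA_i$ in the eigenbasis of $[\mE/\mP]_0$ (note $\mathbf 1\in\QH(X)_{q,0}$), apply $(\mE/\mP)^{\star g}$ to obtain $\sum_i \frac{(\mathbf 1,\mA_i)}{|\mA_i|^2}\la_i^g\,\mA_i$, and then take $\coeff(-, q^d\mP^{\star(1-n-g)})$ of this using $\coeff(\mA_i, q^d\mP^{\star(1-n-g)}) = (\mA_i, q^d\mP^{\star(1-n-g)})/|q^d\mP^{\star(1-n-g)}|^2$; combining with $(\mathbf 1,\mA_i) = \coeff(\mA_i,1)$ (since $\iota(1)=1$, as in the proof of \Corollary{innerprod}) yields exactly the stated sum, up to checking that the normalization factors collapse to $1/|\mA_i|^2$.

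The main bookkeeping obstacle — the one step I would be careful about — is the normalization: the naive spectral formula would carry a factor $|q^d\mP^{\star(1-n-g)}|^{-2}$, and one must verify this equals $1$. This follows from \Corollary{innerprod}, which gives $|q^d\mP^{\star(1-n-g)}|^2 = |q^e[X^v]|^2 = y(s_{\al_0})^e\,y(v)$; on a cominuscule $X$ the point class is a Seidel class, and I would check that $y$ is multiplicative on the subgroup of Seidel classes with $y(s_{\al_0})$ accounting for the $q$-power, so that $y(s_{\al_0})^e y(v) = y(\text{(power of }\mP))$, and then that $y(\mP^{\star k}) = 1$ for all $k$ — e.g. for Grassmannians of type A all $y$-values are $1$, and in the remaining cominuscule cases $\mP$ is a Seidel class whose $y$-value one computes to be $1$ from $y(w_0^X)$ and \Lemma{symmetry}. (Alternatively, one avoids this entirely by writing $q^d\mP^{\star(1-n-g)}=q^e[X^v]$ and using that $[X^v]$ is then a Seidel class with $\coeff(\mA_i,q^e[X^v])$ the $q^e[X^v]$-coordinate, absorbing the scalar into the fact that $\coeff(\mathbf 1, q^{-e}[X_v]\star q^e[X^v])=\coeff(\mathbf 1,\mP)$ need not be unless normalized.)

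Finally I would assemble the two displayed lines: the first equality is the Seidel-class manipulation above, and the second is the spectral expansion, valid since $[\mE/\mP]_0$ is self-adjoint for the strange inner product (\Proposition{symmetry}) hence orthogonally diagonalizable (\Corollary{diagonalize}), and since all the vectors involved — $\mathbf 1$, the $\mA_i$, and (after the Seidel normalization) the target $q^d\mP^{\star(1-n-g)}$ read off in coordinates — lie in the degree-$0$ subspace on which $[\mE/\mP]_0$ acts. The unstable small cases $g+n\le 1$ need no separate argument since the formula is an identity of coefficients in $\QH(X)_q$ that holds for all integers $g,d,n\ge 0$, matching \Definition{mdef}'s conventions when \eqn{bbb} fails (both sides vanish by degree reasons).
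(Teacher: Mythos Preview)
Your approach is correct and is essentially the same as the paper's: expand $1 \in \QH(X)_{q,0}$ in the orthogonal eigenbasis as $1 = \sum_i \frac{(\mA_i,1)}{|\mA_i|^2}\,\mA_i$, apply $(\mE/\mP)^{\star g}$, and extract the coefficient of $q^d\mP^{\star(1-n-g)}$. The first displayed equality is exactly the Seidel-class manipulation you describe, using that multiplication by any power of $\mP$ permutes $\cB$.

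Your third paragraph, however, is chasing a phantom. No factor of $|q^d\mP^{\star(1-n-g)}|^{-2}$ ever appears: once you have
\[
  (\mE/\mP)^{\star g} \star 1 \;=\; \sum_i \frac{\coeff(\mA_i,1)}{|\mA_i|^2}\,\la_i^g\,\mA_i \,,
\]
extracting the $\cB$-coefficient of $q^d\mP^{\star(1-n-g)}$ is a linear functional that simply replaces each $\mA_i$ by $\coeff(\mA_i, q^d\mP^{\star(1-n-g)})$, giving the stated sum directly. You only need the inner product once (to expand $1$), not twice; the conversion formula $\coeff(\mA,\mB) = (\mA,\mB)/|\mB|^2$ that you invoke is not needed for the second step, and so the question of whether $|q^d\mP^{\star(1-n-g)}|^2 = 1$ is irrelevant to the proof. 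The paper's argument is precisely your second paragraph, stopping before the unnecessary detour.
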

\begin{proof}
  The vector space $\QH(X)_{q,0} \otimes \R$ has an orthogonal basis of
  eigenvectors of $[\mE/\mP]_0$ by \Corollary{diagonalize}. The formula follows
  from the identity
  \[
    (\mE/\mP)^{\star g} \star 1
    = (\mE/\mP)^{\star g} \star \sum_{i=1}^k \frac{(\mA_i, 1)}{|A_i|^2}\, \mA_i
    = \sum_{i=1}^k \frac{\coeff(\mA_i, 1)}{|A_i|^2}\, \la_i^g\, \mA_i
  \]
  by extracting the coefficient of $q^d \mP^{\star(1-n-g)}$ on both sides.
\end{proof}

It was proved in \cite{chaput.manivel.ea:quantum*2} that the ring
$\QH(X)/\langle q-1 \rangle$ is semi-simple, which implies that quantum
multiplication by any element is diagonalizable over $\C$. In particular, part
(b) of the following result was known.

\begin{cor}\label{cor:diagonalizeE}
  Let $X$ be a cominuscule flag variety.\smallskip

  \noindent{\rm(a)} Quantum multiplication by $\mE/\mP$ on $\QH(X)/\langle q-1
  \rangle$ is diagonalizable over $\R$.\smallskip

  \noindent{\rm(b)} Quantum multiplication by $\mE$ on $\QH(X)/\langle q-1
  \rangle$ is diagonalizable over $\C$.\smallskip

  \noindent{\rm(c)} Quantum multiplication by $\mE^{\ord(\mP)}$ on
  $\QH(X)/\langle q-1 \rangle$ is diagonalizable over $\R$.
\end{cor}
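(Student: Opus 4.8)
The plan is to deduce all three parts from \Corollary{diagonalize}, which already gives that quantum multiplication by $\mE/\mP$ on the localized ring $\QH(X)_q\otimes\R$ is diagonalizable over $\R$: a basis of eigenvectors (here a Hamel basis, orthogonal for the strange inner product, which by \Corollary{innerprod} is positive definite since the relevant $y$-values are positive rationals) is precisely the statement that $\QH(X)_q\otimes\R$ is the algebraic direct sum of the eigenspaces of $\mE/\mP$, with real eigenvalues. The key structural point is that $\QH(X)/\langle q-1\rangle$ is a quotient \emph{ring} of $\QH(X)_q$: since each $q_\al$ maps to the unit $1$, the surjection $\QH(X)\twoheadrightarrow\QH(X)/\langle q-1\rangle$ extends to $\QH(X)_q$, and $\QH(X)/\langle q-1\rangle=\QH(X)_q/(q-1)\QH(X)_q$. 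Tensoring with $\R$ (flat over $\Q$) yields a surjection of $\R$-vector spaces
\[
  \QH(X)_q\otimes\R\ \twoheadrightarrow\ (\QH(X)/\langle q-1\rangle)\otimes\R
\]
with kernel the ideal generated by $q-1$, and this surjection is equivariant for quantum multiplication by $\mE/\mP$ because the quotient map is a ring homomorphism.

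For part (a) I would invoke the elementary fact that diagonalizability passes to quotients by invariant subspaces: if $T$ is a linear operator on a (possibly infinite-dimensional) vector space $V$ with $V=\bigoplus_\la\ker(T-\la)$, and $W\subset V$ is $T$-invariant, then the induced operator on $V/W$ is diagonalizable with eigenvalues among those of $T$. Indeed, if $\sum_\la v_\la\in W$ with the $v_\la$ lying in distinct eigenspaces, then applying interpolating polynomials in $T$ shows each $v_\la\in W$, so the images $\overline{\ker(T-\la)}$ in $V/W$ are independent and span, i.e. $V/W=\bigoplus_\la\overline{\ker(T-\la)}$. Applying this with $T$ = quantum multiplication by $\mE/\mP$ on $V=\QH(X)_q\otimes\R$ (diagonalizable over $\R$ by \Corollary{diagonalize}) and $W=(q-1)V$ gives part (a).

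Part (b) is the assertion flagged as already known, and is immediate from the semisimplicity of $\QH(X)/\langle q-1\rangle$ proved in \cite{chaput.manivel.ea:quantum*2}: a finite-dimensional semisimple commutative $\Q$-algebra is a product of number fields, so multiplication by any element, in particular $\mE$, is diagonalizable over $\C$. (Alternatively it follows from (a): in $\QH(X)/\langle q-1\rangle$ the class $\mP$ satisfies the squarefree relation $t^{\,\ord(\mP)}-1=0$, hence is diagonalizable over $\C$, and being commutative with the $\R$-diagonalizable $\mE/\mP$ it is simultaneously diagonalizable with it, so $\mE=(\mE/\mP)\star\mP$ is too.) For part (c), recall $\mP$ is a Seidel class, so $\mP^{\star\ord(\mP)}=q^{d_0}$ in $\QH(X)$ for some $d_0\in H_2(X,\Z)$, and this element becomes $1$ in $\QH(X)/\langle q-1\rangle$. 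Writing $p=\ord(\mP)$, we therefore have
\[
  \mE^{\star p}=(\mE/\mP)^{\star p}\star\mP^{\star p}=(\mE/\mP)^{\star p}
  \qquad\text{in }\ \QH(X)/\langle q-1\rangle .
\]
By part (a), quantum multiplication by $\mE/\mP$ on $(\QH(X)/\langle q-1\rangle)\otimes\R$ is diagonalizable with real eigenvalues $\la_i$; its $p$-th power is diagonalized by the same eigenvectors with eigenvalues $\la_i^{\,p}\in\R$, so $\mE^{\star p}$ is diagonalizable over $\R$, which is (c).

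The only step carrying any real content is the descent-of-diagonalizability lemma used in (a); everything else is the bookkeeping of the identification $\QH(X)/\langle q-1\rangle=\QH(X)_q/(q-1)$ and the Seidel identity $\mP^{\star\ord(\mP)}\equiv1$, plus noting that \Corollary{diagonalize} genuinely delivers real eigenvalues rather than merely self-adjointness with respect to a possibly indefinite form. I expect the mild care needed in phrasing the lemma for an infinite-dimensional $V$ (so that "diagonalizable" means an algebraic direct sum of eigenspaces) to be the only place where one must be slightly careful.
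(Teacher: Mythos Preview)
Your argument is correct and follows essentially the same route as the paper: deduce (a) from \Corollary{diagonalize} by passing to the quotient, deduce (b) from (a) together with the fact that $\mP$ satisfies the separable polynomial $t^{\ord(\mP)}-1$ (so is diagonalizable over $\C$ and commutes with $\mE/\mP$), and deduce (c) from (a) via the identity $\mE^{\star\ord(\mP)}=(\mE/\mP)^{\star\ord(\mP)}$ in $\QH(X)/\langle q-1\rangle$. Your careful justification of the descent-of-diagonalizability step and the positivity of the strange norm are details the paper leaves implicit; your parenthetical argument for (b) is in fact closer to the paper's own proof than the semisimplicity citation, since the paper derives (b) directly from (a) rather than invoking \cite{chaput.manivel.ea:quantum*2}.
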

\begin{proof}
  Part (a) follows from \Corollary{diagonalize}. Since $\mP$ is a Seidel class,
  quantum multiplication by $\mP$ is an idempotent operation on $\QH(X)/\langle
  q-1 \rangle$, hence diagonalizable over $\C$. Part (b) therefore follows from
  (a) and the commutativity of the ring $\QH(X)$. Part (c) follows from (a)
  since $\mE^{\star \ord(P)} = (\mE/\mP)^{\star \ord(P)}$ in $\QH(X)/\langle q-1
  \rangle$.
\end{proof}

\Theorem{vTev_comin} is more efficient in practice to use than semi-simplicity
since only the eigenvalues and eigenvectors of a single operator $[\mE/\mP]_0$
on $\QH(X)_{q,0}$ are required instead of the entire system of idempotents of
$\QH(X)$.

\begin{remark}\label{remark:isone}%
  The cominuscule flag variety $X$ may satisfy the property that
  \begin{equation}\label{eqn:isone}
    \int_d c_1(T_X) = r (n+g-1)
    \ \ \ \ \ \text{implies} \ \ \ \ \
    q^d \mP^{\star(1-n-g)} = 1
  \end{equation}
  for all $g,d,n \in \Z$, where $r = \dim(X)$. If \eqn{isone} holds, then the
  virtual Tevelev degree $\vTev^X_{g,d,n}$ depends only on $g$ when the
  dimension constraint \eqn{bbb} is satisfied. Property \eqn{isone} holds for
  $X$ if and only if
  \[
    \ord(P) \ \ \text{divides} \ \ \frac{d_q}{\gcd(d_q,r)} \,,
    \ \ \ \ \ \ \ \text{where} \ \
    d_q = \frac{1}{2} \deg(q) = \int_\rmline c_1(T_X) \,,
  \]
  with the integral over the positive generator of $H_2(X,\Z)$. When $X =
  \Gr(m,N)$ is a Grassmannian, the above condition is equivalent to
  \[
    \gcd(r,N) \ \ \text{divides} \ \ m \,,
  \]
  where $r = \dim(X) = m(N-m)$. The Grassmannians of dimension $r<20$ that do
  not satisfy this condition are $\Gr(2,4)$, $\Gr(2,8)$, $\Gr(4,8)$, and
  $\Gr(3,9)$. Property \eqn{isone} fails for all quadrics $Q^r$, is satisfied
  for the Lagrangian Grassmannian $\LG(N,2N)$ if and only if $N$ is odd, and is
  satisfied for the maximal orthogonal Grassmannian $\OG(N,2N)$ if and only if
  $N$ is not divisible by 4.
\end{remark}

\subsection{Grassmannians}\label{sec:examex}

\Theorem{vTev_comin} provides an effective method to calculate virtual Tevelev
degrees. After revisiting the quadric $Q^r$, including $Q^4=\Gr(2,4)$, we fully
calculate $\Gr(2,5)$, $\Gr(2,6)$, and $\Gr(3,6)$. For $\Gr(2,8)$, $\Gr(3,8)$,
and $\Gr(4,8)$, we just present the final formulas (obtained by computer algebra
and using the Equivariant Schubert Calculator \cite{buch:equivariant}).

\begin{example}
  Let $X = Q^r$ be the quadric of dimension $r$. By \Section{quadric} we have
  \[
    \mE/\mP = (r+\delta) + (r-\delta)\, q^{-1} \mP \,,
  \]
  where $\delta = 1$ if $r$ is odd, whereas $\delta=2$ if $r$ is even. An
  orthogonal basis of $\QH(X)_{q,0}$ consisting of eigenvectors of $\mE/\mP$ is
  given by
  \[
    \mA_1 = 1 + q^{-1} \mP
    \ \ \ \ \ \ \text{and} \ \ \ \ \ \
    \mA_2 = 1 - q^{-1} \mP \,,
  \]
  and the corresponding eigenvalues are
  \[
    \la_1 = 2r \ \ \ \ \ \ \text{and} \ \ \ \ \ \ \la_2 = 2\delta \,.
  \]
  Noting that
  \[
    \coeff(\mA_1, q^{d} \mP^{\star (-d)}) = 1
    \ \ \ \ \ \ \text{and} \ \ \ \ \ \
    \coeff(\mA_2, q^{d} \mP^{\star(-d)}) = (-1)^d\,,
  \]
  we obtain from \Theorem{vTev_comin} that
  \[
    \vTev^{Q^r}_{g,d,n} \ = \
    \frac{1}{2} {\la_1}^g + \frac{(-1)^d}{2} {\la_2}^g \ = \
    \frac{(2r)^g + (-1)^d (2\delta)^g}{2}
  \]
  whenever $g,d,n \geq 0$ satisfy $d = n+g-1$.
\end{example}

\begin{example}
  Let $X = \Gr(2,5)$. Quantum multiplication by $\mE/\mP$ on the basis
  \[
    \{ 1, \qXcb \}
  \]
  of $\QH(X)_{q,0}$ is given by
  \begin{align*}
    \mE/\mP \star 1 &= 10 + 5\, \qXcb \,, \\
    \mE/\mP \star \qXcb &= 5 + 15\, \qXcb \,.
  \end{align*}
  The eigenvalues of $[\mE/\mP]_0$ are
  \[
    \la_1 = \frac{25 + 5\,\sqrt{5}}{2}
    \ \ \ \ \ \text{and} \ \ \ \ \
    \la_2 = \frac{25 - 5\,\sqrt{5}}{2} \,,
  \]
  and corresponding eigenvectors are
  \[
    \mA_1 = 2 + \left( 1 + \sqrt{5} \right)\, \qXcb
    \ \ \ \ \ \text{and} \ \ \ \ \
    \mA_2 = 2 + \left( 1 - \sqrt{5} \right)\, \qXcb \,.
  \]
  For $g,d,n \geq 0$ satisfying $5d = 6(n+g-1)$, we obtain from
  \Theorem{vTev_comin} and \Remark{isone} that
  \[
    \begin{split}
      \vTev^{\Gr(2,5)}_{g,d,n} \
      &= \ \sum_{i=1}^2 \frac{\coeff(\mA_i,1)^2}{|\mA_i|^2}\, \la_i^g \\
      &= \ \frac{5 - \sqrt{5}}{10} \left( \frac{25 + 5\,\sqrt{5}}{2} \right)^g
      + \frac{5 + \sqrt{5}}{10} \left( \frac{25 - 5\,\sqrt{5}}{2} \right)^g \,.
    \end{split}
  \]
\end{example}

\begin{example}
  Let $X = \Gr(2,6)$. Quantum multiplication by $\mE/\mP$ on the Schubert basis
  of $\QH(X)_{q,0}$ is given by
  \begin{align*}
    \mE/\mP \star 1 &= 15 + 3\, \qXcc + 9\, \qXdb \,, \\
    \mE/\mP \star \qXcc &= 3 + 15\, \qXcc + 9\, \qXdb \,, \\
    \mE/\mP \star \qXdb &= 9 + 9\, \qXcc + 27\, \qXdb \,.
  \end{align*}
  The eigenvalues and associated eigenvectors of $[\mE/\mP]_0$ are:
  \begin{align*}
    \la_1 &= 36 & \mA_1 &= 1 + \qXcc + 2\, \qXdb \\
    \la_2 &= 12 & \mA_2 &= 1 - \qXcc \\
    \la_3 &= 9 & \mA_3 &= 1 + \qXcc - \qXdb
  \end{align*}
  For $g,d,n \geq 0$ satisfying $6d = 8(n+g-1)$, we obtain from
  \Theorem{vTev_comin} and \Remark{isone} that
  \[
    \vTev^{\Gr(2,6)}_{g,d,n}
    \ = \ \sum_{i=1}^3 \frac{\coeff(\mA_i,1)^2}{|\mA_i|^2}\, \la_i^g
    \ = \ \frac{{36}^g}{6} + \frac{{12}^g}{2} + \frac{9^g}{3} \,.
  \]
\end{example}

\begin{example}
  Let $X = \Gr(3,6)$. Quantum multiplication by $\mE/\mP$ on the Schubert basis
  of $\QH(X)_{q,0}$ is given by
  \begin{align*}
    \mE/\mP \star 1 &= 20 + 2\, \qXcc + 2\, \qXbbb + 16\, \qXcba \,, \\
    \mE/\mP \star \qXcc &= 2 + 20\, \qXcc + 2\, \qXbbb + 16\, \qXcba \,, \\
    \mE/\mP \star \qXbbb &= 2 + 2\, \qXcc + 20\, \qXbbb + 16\, \qXcba \,, \\
    \mE/\mP \star \qXcba &= 16 + 16\, \qXcc + 16\, \qXbbb + 56\, \qXcba \,.
  \end{align*}
  The eigenvalues and associated orthogonal eigenvectors of $[\mE/\mP]_0$ are:
  \begin{align*}
    \la_1 &= 72 &
    \mA_1 &= 1 + \qXcc + \qXbbb + 3\, \qXcba \\
    \la_2 &= 18 &
    \mA_2 &= 2 - \qXcc - \qXbbb \\
    \la_3 &= 18 &
    \mA_3 &= \qXcc - \qXbbb \\
    \la_4 &= 8 &
    \mA_4 &= 1 + \qXcc + \qXbbb - \qXcba \,.
  \end{align*}
  For $g,d,n \geq 0$ satisfying $6d = 9(n+g-1)$, we obtain from
  \Theorem{vTev_comin} and \Remark{isone} that
  \[
    \vTev^{\Gr(3,6)}_{g,d,n}
    \ = \ \sum_{i=1}^4 \frac{\coeff(\mA_i,1)^2}{|\mA_i|^2}\, \la_i^g
    \ = \ \frac{{72}^g}{12} + \frac{2 \cdot {18}^g}{3} + \frac{8^g}{4} \,.
  \]
\end{example}
\vspace{8pt}

\begin{example}
The virtual Tevelev degrees of $\Gr(2,8)$, $\Gr(3,8)$, and $\Gr(4,8)$ are given
by the following formulas (valid when the dimension constraint \eqn{bbb} is
satisfied):

\vspace{12pt}
\noindent $\bullet$
For
$\Gr(2,8)$, the virtual Tevelev degrees are
\[
  \frac{\left( 2-\sqrt{2} \right) \left( 64 + 32\,\sqrt{2} \right)^g}{8}
  + \frac{\left( 2+\sqrt{2} \right) \left( 64 - 32\,\sqrt{2} \right)^g}{8}
  + \frac{(-1)^d {32}^{g}}{4}
  + \frac{(-1)^d {16}^{g}}{4}
  \,.
\]

\vspace{8pt}
\noindent $\bullet$
For $\Gr(3,8)$, the virtual Tevelev degrees are
\[
  \frac{\left(3 - 2\,\sqrt{2} \right) \left(384 + 256\,\sqrt{2} \right)^g}{16}
  + \frac{\left( 3+2\,\sqrt{2} \right) \left(384 - 256\,\sqrt{2} \right)^g}{16}
  + \frac{{128}^g}{8}
  + \frac{{64}^g}{4}
  + \frac{{32}^g}{4}
  \,.
\]

\vspace{8pt}
\noindent $\bullet$
For $\Gr(4,8)$, the virtual Tevelev degrees are
\begin{multline*}
  \frac{\left( 3-2\,\sqrt{2} \right) \left( 768+512\,\sqrt{2} \right)^g}{32}
  + \frac{\left( 3+2\,\sqrt{2} \right) \left( 768-512\,\sqrt{2} \right)^g}{32}
  + \frac{{128}^{g}}{8}
  + \frac{{64}^{g}}{16}
  + \frac{{16}^{g}}{8} \\
  + \frac{(-1)^{d/2}\left(2-\sqrt{2}\right) \left(128+64\,\sqrt{2}\right)^g}{8}
  + \frac{(-1)^{d/2}\left(2+\sqrt{2}\right) \left(128-64\,\sqrt{2}\right)^g}{8}
  \,.
\end{multline*}
For $\Gr(4,8)$, the dimension constraint \eqn{bbb} implies that $d$ is even.
\end{example}

Formulas for $\Gr(2,7)$ and $\Gr(3,7)$ have complexity similar to that of the
Freudenthal variety $E_7/P_7$ below, so they are omitted. Some virtual Tevelev
degrees of these spaces are included in the following table.\medskip

\begin{center}
\begin{tabular}{|c|c|c|}
  \hline
  $g$ & $\Gr(2,7)$ & $\Gr(3,7)$ \\
  \hline
  0 & 1 & 1 \\
  1 & 21 & 35 \\
  2 & 686 & 2744 \\
  3 & 33614 & 470596 \\
  4 & 2000033 & 107884133 \\
  5 & 126825622 & 26310551764 \\
  6 & 8191782221 & 6491563697269 \\
  7 & 531900893867 & 1605160235412769 \\
  8 & 34589376715299 & 397071802007102691 \\
  9 & 2250344155712982 & 98232421880349925476 \\
  10 & 146424292089662006 & 24302307748473316398284 \\
  11 & 9527847961374037099 & 6012312236720159623681561 \\
  12 & 619985909132445247770 & 1487427484539611374221472752 \\
  13 & 40343209216871520541603 & 367985011574983125611827761985 \\
  14 & 2625182876113221414704217 & 91038368842060169714846533326833 \\
  15 & 170823979704176185099894853 & 22522614725296806700134311109583811 \\
  \hline
\end{tabular}
\end{center}
\medskip


\subsection{Further cominuscule flag varieties}\label{sec:furthercfvs}%

The first Lagrangian Grassmannians do not produce new examples, $\LG(1,2) \cong
\bP^1$ and $\LG(2,4) \cong Q^3$. The following formulas are valid when the
dimension constraint \eqn{bbb} is satisfied:

\vspace{12pt}
\noindent $\bullet$
For
$\LG(3,6)$, the virtual Tevelev degrees are
\[
  \frac{\left( 2 - \sqrt{2} \right) \left( 16+8\,\sqrt{2} \right)^g}{4}
  + \frac{\left( 2 + \sqrt{2} \right) \left( 16-8\,\sqrt{2} \right)^g}{4}
  \,.
\]

\vspace{8pt}
\noindent $\bullet$
For $\LG(4,8)$, the virtual Tevelev degrees are
\[
  \frac{\left( 5-2\,\sqrt{5} \right) \left( 100+40\,\sqrt{5} \right)^g}{20}
  + \frac{\left( 5+2\,\sqrt{5} \right) \left( 100-40\,\sqrt{5} \right)^g}{20}
  + \frac{(-1)^{d/2}\,{20}^g}{4}
  + \frac{(-1)^{d/2}\,{4}^g}{4}
  \,.
\]

\vspace{8pt}
\noindent $\bullet$
For
$\LG(5,10)$, the virtual Tevelev degrees are
\[
  \frac{\left( 7-4\,\sqrt{3} \right) \left( 1008+576\,\sqrt{3} \right)^g}{24}
  + \frac{\left( 7+4\,\sqrt{3} \right) \left( 1008-576\,\sqrt{3} \right)^g}{24}
  + \frac{{144}^g}{24}
  + \frac{{48}^g}{4}
  + \frac{{16}^g}{8}
  \,.
\]

\vspace{8pt}

The first orthogonal Grassmannians do not produce new examples, $\OG(2,4) \cong
\bP^1$ and $\OG(3,6) \cong \bP^3$. The following formulas are valid when the
dimension constraint \eqn{bbb} is satisfied:

\vspace{12pt}
\noindent $\bullet$
For $\OG(4,8)$, the virtual Tevelev degrees are
\[
  \frac{{12}^g}{2} + \frac{(-1)^d\, 4^g}{2} \,.
\]

\vspace{8pt}
\noindent $\bullet$
For
$\OG(5,10)$, the virtual Tevelev degrees are
\[
  \frac{ \left( 2-\sqrt{2} \right) \left( 32+16\,\sqrt{2} \right)^g}{4}
  + \frac{ \left( 2+\sqrt{2} \right) \left( 32-16\,\sqrt{2} \right)^g}{4}
  \,.
\]
\vspace{8pt}

\noindent $\bullet$
For $\OG(6,12)$, the virtual Tevelev degrees are
\[
  \frac{\left( 5-2\,\sqrt{5} \right) \left( 200+80\,\sqrt{5} \right)^g}{20}
  + \frac{\left( 5+2\,\sqrt{5} \right) \left( 200-80\,\sqrt{5} \right)^g}{20}
  + \frac{{40}^g}{4}
  + \frac{{8}^g}{4}
  \,.
\]
\vspace{8pt}

Finally, there are two exceptional cases: the Cayley plane $E_6/P_6$ and the
Freudenthal variety $E_7/P_7$. The following formulas are valid when the
dimension constraint \eqn{bbb} is satisfied:

\vspace{12pt}
\noindent $\bullet$
For $E_6/P_6$, the virtual Tevelev degrees are
\[
  \frac{\left( 2-\sqrt{3} \right) \left( 144+72\,\sqrt{3} \right)^g}{6}
  + \frac{\left( 2+\sqrt{3} \right) \left( 144-72\,\sqrt{3} \right)^g}{6}
  + \frac{9^g}{3}
  \,.
\]

\vspace{12pt}
\noindent $\bullet$
For $E_7/P_7$, the virtual Tevelev degrees are
\begin{multline*}
  \frac{8^g}{4}
  + \frac{a_1+a_2\zeta+ a_3 \zeta^{-1}}{b_1 + b_2 \zeta + b_3 \zeta^{-1}}
  \left( 2376 + 432\,\zeta + 12096\,\zeta^{-1} \right)^g
  \\
  + \frac{a_1 - a_4 \zeta + a_5 \zeta^{-1}}{b_1 - b_4 \zeta + b_5 \zeta^{-1}}
  \big( 2376 - \alpha + \beta \big)^g
  + \frac{a_1 + a_6 \zeta - a_7 \zeta^{-1}}{b_1 + b_6 \zeta - b_7 \zeta^{-1}}
  \big( 2376 - \alpha - \beta \big)^g
\end{multline*}
where
\[
\zeta=\sqrt[3]{148+4\,i\sqrt{3}}
\]
is taken in the first quadrant, and the other constants are defined by:
\begin{align*}
\alpha &= 864\,\sqrt{7}\, \cos\left(\frac{1}{3}\,
\arctan \left( \frac{\sqrt{3}}{37} \right) \right) &
\beta &= 864\,\sqrt{21}\, \sin \left(\frac{1}{3}\,
\arctan \left( \frac{\sqrt{3}}{37} \right) \right) \\
a_{{1}}&=156498345\,i\sqrt{3}+1918858850 &
b_{{1}}&=60400326564\,i\sqrt{3}+740581002120 \\
a_{{2}}&=26312126\,i\sqrt{3}+363365382 &
b_{{2}}&=10153341360\,i\sqrt{3}+140247740712 \\
a_{{3}}&=919199848\,i\sqrt{3}+10129587384 &
b_{{3}}&=354845016000\,i\sqrt{3}+3909674626464 \\
a_{{4}}&=194838754\,i\sqrt{3}+142214502 &
b_{{4}}&=75200541036\,i\sqrt{3}+54893858316 \\
a_{{5}}&=4605193768\,i\sqrt{3}-6443593464 &
b_{{5}}&=1777414805232\,i\sqrt{3}-2487104837232 \\
a_{{6}}&=168526628\,i\sqrt{3}-221150880 &
b_{{6}}&=65047199676\,i\sqrt{3}-85353882396 \\
a_{{7}}&=5524393616\,i\sqrt{3}+3685993920 &
b_{{7}}&=2132259821232\,i\sqrt{3}+1422569789232
\end{align*}

\Theorem{vTev_comin} implies that all eigenvalues of $\mE/\mP$ and all
coefficients in the formula for virtual Tevelev degrees are real numbers (which is
not clear from the above expressions). The following table contains some
virtual Tevelev degrees of $E_7/P_7$.\smallskip

\begin{center}
  \begin{tabular}{|c|c|c|}
    \hline
    $g$ & $E_7/P_7$ \\
    \hline
    0 & 1 \\
    1 & 56 \\
    2 & 128320 \\
    3 & 869201408 \\
    4 & 6035673223168 \\
    5 & 41931214470742016 \\
    6 & 291308765400165253120 \\
    7 & 2023810102768684733825024 \\
    8 & 14060020975152452459315593216 \\
    9 & 97679218802247250296546711830528 \\
    10 & 678607080508699448610546779756167168 \\
    11 & 4714488663641616811439032212948871282688 \\
    12 & 32752978856253489427845306031022643827703808 \\
    13 & 227544851731504006840105249380606108740637163520 \\
    14 & 1580822916191834644483662867101537620104827373617152 \\
    15 & 10982454990043024221511165369579640911620064101974147072 \\
    \hline
  \end{tabular}
\end{center}
\medskip


\section{Complete intersections}\label{sec:cplint}

\subsection{Classical cohomology}

Let $X = V(f_1,\dots,f_L) \subset \bP^{r+L}$ be a nonsingular complete
intersection of dimension $r$ where
\[
  f_i \in \Gamma(\bP^{r+L},\cO(m_i))
\]
for $1 \leq i \leq L$. We denote the vector of degrees by $\mm =
(m_1,\dots,m_L)$. We will always assume $r\geq 3$, so
\[
  H_2(X,\Z) = \Z
\]
by the Lefschetz Hyperplane Theorem.

The following notation will be convenient, for any $a,b \in \Z$:
\[
  |\mm| = \sum_{i=1}^L m_i \,, \ \ \ \
  \mm^{a\mm+b} = \prod_{i=1}^L m_i^{a m_i + b} \,, \ \ \ \
  (a\mm+b)! = \prod_{i=1}^L (a m_i + b)! \ .
\]
For example, the projective degree of $X$ in $\bP^{r+L}$ is $\mm^1 =
\prod_{i=1}^L m_i$, and $X$ is Fano if and only if $|\mm| \leq r+L$.

Let $H^*(X) = H^*(X,\Q)$ be the singular cohomology ring with rational
coefficients. The restricted part of $H^*(X)$ has basis $\{1, \mH, \dots, \mH^r
\}$, where $\mH\in H^*(X)$ denotes the image of the hyperplane class $\mH \in
H^2(\bP^{r+L})$ under the restriction
\[
  H^2(\bP^{r+L}) \rightarrow H^2(X) \,.
\]
The class of a point is
\[
  \mP = \mm^{-1} \mH^r \in H^{2r}(X) \,.
\]
The remaining cohomology of $X$, spanned by the primitive cohomology, is
contained in the middle dimension $H^r(X)$. Let
\[
  \chi(X) = \int_X c_r(T_X)
\]
denote the Euler characteristic of $X$.

\subsection{Quantum cohomology}

Let $X \subset \bP^{r+L}$ be a smooth Fano complete intersection of dimension $r
\geq 3$. Then the group $H_2(X,\Z)$ is freely generated by the class of a line
contained in $X$. A curve class in $X$ is therefore determined by the associated
projective degree, a non-negative integer.

The (small) quantum cohomology ring $\QH(X)$ is an algebra over the polynomial
ring $\Q[q]$ in one variable $q$. As a $\Q[q]$-module, we have
\[
  \QH(X) = H^*(X) \otimes_\Q \Q[q] \,.
\]
The product in $\QH(X)$ of two classes $\Gamma_1,
\Gamma_2 \in H^*(X)$ is defined by
\[
  \Gamma_1 \star \Gamma_2 =
  \sum_{d \geq 0} q^d (\Gamma_1 \star \Gamma_2)^X_d \,,
\]
where $(\Gamma_1 \star \Gamma_2)^X_d \in H^*(X)$ is the unique class satisfying
\[
  \int_X (\Gamma_1 \star \Gamma_2)^X_d \cdot \Gamma_3 =
  \langle\Gamma_1, \Gamma_2, \Gamma_3\rangle^X_{0,d} =
  \int_{[\Mb_{0,3}(X,d)]^\vir} \ev_1^*(\Gamma_1) \cdot
  \ev_2^*(\Gamma_2) \cdot \ev_3^*(\Gamma_3)
\]
for all $\Gamma_3 \in H^*(X)$.

Since the virtual fundamental class $[\Mb_{0,3}(X,d)]^\vir$ has
$\R$-dimension
\[
  2r + 2\int_d c_1(T_X) \,,
\]
we see that $\QH(X)$ is a graded ring where the elements of $H^*(X)$ have
their usual real degrees and
\[
  \deg(q) = 2\int_{\text{line}} c_1(T_X) = 2(r+L+1-|\mm|) \,.
\]
For $\mA \in H^*(X)$, let $\mA^k \in H^*(X)$ and $\mA^{\star k} \in \QH(X)$
denote the $k^{\mathrm{th}}$ powers with respect to the classical and quantum
products respectively.

In the following, we will assume $\deg(q) \geq \deg(\mH^2)$, or equivalently,
\begin{equation}\label{eqn:findex}
  |\mm| \leq r+L-1 \,.
\end{equation}
In the case \eqn{findex}, $\QH(X)$ satisfies the relation \cite{givental:mirror}
(see also \cite[\S 3]{pandharipande:rational} for an exposition):
\begin{equation}
  \label{eqn:magic}
  \mH^{\star(r+1)} =
  \mm^{\mm}\, q \mH^{\star(|\mm| - L)} \,.
\end{equation}
By condition \eqn{findex}, we have $0\leq |\mm| - L \leq r-1$.

We require the following result which was proved by Graber when $X$ is a
hypersurface \cite[Prop.~4]{pandharipande:rational}. The proof in the complete
intersection case is the same.

\begin{prop}[Graber]
  \label{prop:graber}
  Let $\mR = \Span \{1, \mH, \dots, \mH^r \} \subset H^*(X)$ be the
  subspace of restricted classes. Then, $(\mR \otimes_\Q \Q[q], \star)$
  is a subring of $\QH(X)$.
\end{prop}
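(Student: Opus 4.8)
I would follow the strategy of Graber's original argument for hypersurfaces, which applies verbatim to complete intersections. The goal is to show that for all $0 \le a, b \le r$ and all $d \ge 0$ the class $(\mH^a \star \mH^b)^X_d \in H^*(X)$ lies in $\mR$. By the Lefschetz theory recalled above, every cohomology class of $X$ outside the middle degree $H^r(X)$ (here $r$ is the complex dimension of $X$) is a $\Q$-multiple of a power of $\mH$, while in the middle degree there is an \emph{orthogonal} decomposition $H^r(X) = \Q\,\mH^{r/2} \oplus P$, where $P$ denotes the primitive cohomology and the first summand is absent when $r$ is odd. Since $(\mH^a \star \mH^b)^X_d$ always sits in an even real degree, it can meet the middle degree only when $r$ is even, so for $r$ odd it is automatically restricted. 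For $r$ even, $(\mH^a\star\mH^b)^X_d$ lies in $\mR$ if and only if it pairs to zero against every primitive class, i.e.\ if and only if
\[
  \langle \mH^a, \mH^b, \gamma \rangle^X_{0,d} = 0 \qquad \text{for all } \gamma \in P \,.
\]

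First I would invoke the deformation invariance of Gromov-Witten invariants in flat families. Let $X$ vary in the universal family $\pi : \mathcal{X} \to B$ of smooth complete intersections in $\bP^{r+L}$ of multidegree $\mm$. The hyperplane class extends over all of $B$, and $R^r\pi_*\Q$ contains the primitive local subsystem $\mathcal{P}$, with $\mathcal{P}_{b_0} = P$ for $X = X_{b_0}$. Working with the relative moduli space $\Mb_{0,3}(\mathcal{X}/B, d)$, equipped with its relative virtual class and relative evaluation maps, the assignment $\gamma \mapsto \langle \mH^a, \mH^b, \gamma\rangle_{0,d}$ is a flat (horizontal) section of $\mathcal{P}^\vee$ over $B$; equivalently, for fixed $X$ the linear functional $\gamma \mapsto \langle \mH^a, \mH^b, \gamma\rangle^X_{0,d}$ on $P$ is invariant under the monodromy representation $\rho : \pi_1(B, b_0) \to \GL(P)$.

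The external input is then the theorem of Beauville (building on Deligne's study of the monodromy of Lefschetz pencils): for a smooth complete intersection of dimension $r \ge 3$, the monodromy of the universal family acts on the primitive cohomology $P$ with no nonzero invariant vectors (indeed, up to finite index the image of $\rho$ is the orthogonal group of the intersection form on $P$). If $P = 0$ there is nothing to prove; otherwise $(P^\vee)^{\rho} = 0$, so the monodromy-invariant functional above vanishes identically on $P$. This is exactly the displayed vanishing, hence $(\mH^a\star\mH^b)^X_d \in \mR$ for all $a,b,d$, which is precisely the assertion that $(\mR \otimes_\Q \Q[q], \star)$ is a subring of $\QH(X)$.

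The step that requires the most care is the flatness claim in the second paragraph: one must check that parallel transport of $H^r(X_b)$ by the Gauss-Manin connection carries the functional $\langle \mH^a, \mH^b, -\rangle^{X_b}_{0,d}$ to $\langle \mH^a, \mH^b, -\rangle^{X_{b'}}_{0,d}$ for nearby $b, b'$. This follows from deformation invariance of Gromov-Witten invariants applied to $\Mb_{0,3}(\mathcal{X}/B, d)$, together with the compatibility of the Poincar\'e pairing and the K\"unneth decomposition with the underlying local system; it is the one point where the geometry, rather than formal manipulation, is genuinely used. Everything else is immediate once Beauville's theorem is invoked.
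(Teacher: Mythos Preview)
Your argument is correct and is precisely Graber's proof. Note that the paper does not give its own proof of this proposition; it simply cites Graber's result for hypersurfaces and remarks that the same argument works for complete intersections. Your reconstruction of that argument---reduce to showing $\langle \mH^a,\mH^b,\gamma\rangle^X_{0,d}=0$ for primitive $\gamma$, use deformation invariance to see that this functional on $P$ is monodromy invariant, then invoke the absence of monodromy invariants in primitive cohomology---is exactly the intended proof. Two minor remarks: the paper (in the later proof of \Proposition{midmid}) attributes the monodromy input to Deligne rather than Beauville; and for the present proposition one only needs $P^{\rho}=0$, which holds for all smooth complete intersections of dimension $r\geq 3$, whereas the stronger statement that the algebraic monodromy is the full orthogonal/symplectic group (used later in the paper) requires excluding the intersection of two quadrics.
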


Let $\QH(X)^\res = \mR \otimes_\Q \Q[q] \subset \QH(X)$ denote the
subring of \Proposition{graber}.

\begin{lemma}
  \label{lemma:deg2Nclass}
  Let $\Gamma \in \QH(X)^\res$ be any class of degree $2r$ satisfying
  \[
    \Gamma \equiv a \mH^r \ \text{\em mod}\ q \ \ \ \ \text{and} \ \ \ \
    \mH \star \Gamma = b\, q \mH^{\star(|\mm|-L)} \,,
  \]
  for $a,b \in \Q$. Then, $\Gamma = a \mH^{\star r} + (b - a
  \mm^{\mm})\, q \mH^{\star (|\mm|-L-1)}$.
\end{lemma}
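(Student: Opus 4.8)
The plan is to prove the Lemma in two steps: first exhibit the stated expression as \emph{a} class meeting the two constraints, and then show that a degree-$2r$ class in $\QH(X)^\res$ is determined \emph{uniquely} by those constraints. Throughout write $s = |\mm| - L$, so that \eqn{findex} gives $0 \le s \le r-1$ and $\deg(q) = 2(r+1-s) \ge 4$. The only structural fact I will need, beyond \Proposition{graber} and the relation \eqn{magic}, is that $\QH(X)^\res$ is a free $\Q[q]$-module with basis the quantum powers $\{\,\mH^{\star i} \mid 0 \le i \le r\,\}$. This follows from the classical basis $\{1,\mH,\dots,\mH^r\}$ of $\mR$ once one observes that $\mH^{\star i} \equiv \mH^{i} \pmod q$ — the classical cup product is the $q^0$-part of $\star$, and $\QH(X)^\res$ is $\star$-closed — so that, grading by cohomological degree, the change-of-basis matrix from $\{\mH^{i}\}$ to $\{\mH^{\star i}\}$ is unitriangular over $\Q[q]$ and hence invertible.

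For existence I would check that $\Gamma_0 := a\,\mH^{\star r} + (b - a\,\mm^{\mm})\, q\, \mH^{\star(s-1)}$ works. Reducing mod $q$ and using $\mH^{\star i} \equiv \mH^{i}$ gives $\Gamma_0 \equiv a\,\mH^{r}$. Quantum-multiplying by $\mH$ and applying \eqn{magic} in the form $\mH^{\star(r+1)} = \mm^{\mm}\, q\, \mH^{\star s}$ yields
\[
  \mH \star \Gamma_0 \;=\; a\,\mm^{\mm}\, q\,\mH^{\star s} \;+\; (b - a\,\mm^{\mm})\, q\, \mH^{\star s} \;=\; b\, q\, \mH^{\star s},
\]
so both hypotheses hold. (When $s=0$, i.e.\ $X = \bP^{r}$, the second summand of $\Gamma_0$ is vacuous; but the hypotheses then force $b=a$, so its coefficient $b - a\,\mm^{\mm} = b-a$ vanishes and the formula still reads correctly.)

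For uniqueness I would take any degree-$2r$ class $\Gamma \in \QH(X)^\res$ satisfying the two hypotheses and show $\Gamma = \Gamma_0$. Since $\Gamma - \Gamma_0 \equiv 0 \pmod q$ and the free $\Q[q]$-module $\QH(X)^\res$ is $q$-torsion-free, one can write $\Gamma - \Gamma_0 = q\,\Xi$ with $\Xi \in \QH(X)^\res$ homogeneous of degree $2(s-1)$. Subtracting the two instances of the second hypothesis gives $q\,(\mH \star \Xi) = \mH \star (\Gamma - \Gamma_0) = 0$, hence $\mH \star \Xi = 0$. It then suffices to see that quantum multiplication by $\mH$ is injective on the degree-$2(s-1)$ part of $\QH(X)^\res$: that part is spanned by the monomials $\mH^{\star i} q^{j}$ with $2i + j\deg(q) = 2(s-1)$, each of which has $i \le s-1 \le r-2$, so $\mH \star (\mH^{\star i} q^{j}) = \mH^{\star(i+1)} q^{j}$ with $i+1 \le r-1$ — no appeal to \eqn{magic} is triggered — and monomials with distinct $j$ map to distinct basis monomials. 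Hence the images are $\Q$-linearly independent, the map is injective, $\Xi = 0$, and $\Gamma = \Gamma_0$.

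The one genuinely load-bearing point is this final injectivity: it holds precisely because \eqn{findex} forces $s - 1 \le r - 2$, so that degree-$2(s-1)$ monomials, after multiplication by $\mH$, stay strictly below the degree $2(r+1)$ at which \eqn{magic} "wraps" $\mH^{\star(r+1)}$ back down into lower powers. The remaining items — the quantum-power basis and the existence verification — are routine grading bookkeeping.
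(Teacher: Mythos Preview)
Your proof is correct and uses essentially the same ingredients as the paper: the quantum-power basis $\{1,\mH,\mH^{\star 2},\dots,\mH^{\star r}\}$ of $\QH(X)^\res$, the relation \eqn{magic}, and the observation that multiplication by $\mH$ simply shifts these basis elements below the top degree. The only difference is organizational: the paper expands $\Gamma$ in this basis, multiplies by $\mH$, and reads off all the $f_i$ by comparing coefficients, whereas you package the same comparison as ``existence of $\Gamma_0$ plus injectivity of $\mH\star$ on degree $2(s-1)$''; mathematically these are the same argument.
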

\begin{proof}
  By the definition of the quantum product, we have
  \[
    \mH^i\equiv\mH^{\star i} \ \text{mod}\ q
  \]
  for $0\leq i \leq r$. By \Proposition{graber}, $\mH^{\star i}\in \QH(X)^\res$.
  Therefore, $\{1, \mH, \mH^{\star 2}, \mH^{\star3}, \dots, \mH^{\star r} \}$ is
  a basis of $\QH(X)^\res$ as a $\Q[q]$-module. We can write
  \[
    \Gamma = a\mH^{\star r} +\sum_{i=0}^{r-1} f_i(q) \mH^{\star i} \ \in
    \QH(X)^\res
  \]
  where $q$ divides $f_i$ for all $0\leq i \leq r-1$. Then,
  \[
    \mH\star \Gamma = a\mH^{\star (r+1)} +\sum_{i=0}^{r-1} f_i(q) \mH^{\star
    (i+1)} \ \in \QH(X)^\res \,.
  \]
  Using the second assumption and \eqn{magic}, we see
  \[
    b\, q \mH^{\star(|\mm|-L)} = a \mm^{\mm} q
    \mH^{\star(|\mm|-L)} +\sum_{i=0}^{r-1} f_i(q) \mH^{\star (i+1)}
    \ \in \QH(X)^\res \,,
  \]
  which implies the desired result by extracting the
  $\mH^{\star(|\mm|-L)}$ term.
\end{proof}

The primitive cohomology of $X$ is the linear subspace of $H^r(X)$ annihilated
by classical multiplication by $\mH$ in $H^*(X)$. The next result states that
primitive classes are also annihilated by quantum multiplication by $\mH$.

\begin{cor}\label{cor:Hmiddle}
  Let $\mA \in H^r(X)$ satisfy $\mH \cdot \mA = 0 \in H^{r+2}(X)$. Then
  \[
    \mH \star \mA = 0 \ \in \QH(X) \,.
  \]
\end{cor}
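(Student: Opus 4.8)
The plan is to show that the coefficient of $q^d$ in $\mH \star \mA$ vanishes for every $d \ge 0$. The $d = 0$ coefficient is the classical product $\mH \cup \mA$, which is zero by hypothesis, so only the part of $\mH \star \mA$ divisible by $q$ needs attention.

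First I would confine $\mH \star \mA$ to the restricted subring. Write $\mH \star \mA = \sum_{d \ge 0} q^d (\mH \star \mA)^X_d$ with $(\mH \star \mA)^X_d \in H^*(X)$. Since the quantum product is graded, for $d \ge 1$ the class $(\mH \star \mA)^X_d$ is homogeneous of cohomological degree $r + 2 - d \deg(q)$; by condition \eqn{findex} we have $\deg(q) \ge 4$, so this degree is at most $r - 2$. By the Lefschetz hyperplane theorem, $H^j(X)$ is contained in $\mR = \Span\{1, \mH, \dots, \mH^r\}$ for every $j \ne r$. Hence each $(\mH \star \mA)^X_d$ with $d \ge 1$ lies in $\mR$, and, the $d = 0$ term being zero, $\mH \star \mA$ lies in $\QH(X)^\res = \mR \otimes_\Q \Q[q]$ with no constant term; that is, it is a $\Q[q]$-linear combination of the powers $\mH, \mH^2, \dots, \mH^r$ all of whose coefficients are divisible by $q$.

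Next I would pair $\mH \star \mA$ against the powers of $\mH$, using the Frobenius property of the quantum product with respect to the constant Poincar\'e pairing $g(\Gamma_1, \Gamma_2) = \int_X \Gamma_1 \cup \Gamma_2$ (equivalently, the $S_3$-symmetry of the $3$-point Gromov--Witten invariants): for every $k \ge 0$,
\[
  g(\mH \star \mA,\ \mH^k) \ = \ g(\mA,\ \mH \star \mH^k) \,.
\]
By \Proposition{graber}, $\mH \star \mH^k$ lies in $\QH(X)^\res = \mR \otimes_\Q \Q[q]$, hence is a $\Q[q]$-linear combination of the classical powers $1, \mH, \dots, \mH^r$; therefore $g(\mA, \mH \star \mH^k)$ is a $\Q[q]$-combination of the integers $\int_X \mA \cup \mH^\ell$ with $0 \le \ell \le r$. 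Each of these vanishes: for degree reasons unless $2\ell = r$, and, when $2\ell = r$, because $\int_X \mA \cup \mH^{r/2} = \int_X (\mH \cup \mA) \cup \mH^{r/2-1} = 0$ by hypothesis. Hence $g(\mH \star \mA, \mH^k) = 0$ for all $k$. Since $\mH \star \mA$ is a $\Q[q]$-combination of the powers of $\mH$, and the powers $\mH^0, \mH^1, \dots, \mH^r$ pair non-degenerately among themselves under $\int_X$ (indeed $\int_X \mH^j \cup \mH^{r-j} = \mm^1 \ne 0$), this forces $\mH \star \mA = 0$.

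The argument is short, and the one non-routine step is the first: the Fano-index inequality \eqn{findex} is essential there, for it is exactly $\deg(q) > \deg(\mH)$ that pushes all the $q$-corrections to $\mH \star \mA$ into cohomological degrees $< r$, where $H^*(X)$ is spanned by powers of $\mH$ and hence meets the primitive cohomology only in $0$. Everything afterwards is a formal consequence of Graber's subring property (\Proposition{graber}) together with the orthogonality of primitive classes to the powers of $\mH$.
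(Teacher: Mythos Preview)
Your proof is correct and follows essentially the same approach as the paper's: both use the degree estimate (via \eqn{findex}) together with Lefschetz to force $(\mH \star \mA)^X_d$ into the restricted part for $d \geq 1$, then use the $S_3$-symmetry of three-point invariants (your Frobenius identity) combined with \Proposition{graber} to show orthogonality to all powers of $\mH$, and conclude by nondegeneracy of the Poincar\'e pairing on $\mR$. The paper's version is more terse, working coefficient-by-coefficient in $d$, while you phrase it globally over $\Q[q]$, but the content is identical.
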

\begin{proof}
  By \Proposition{graber}, we have
  \[
    \int_X (\mH \star \mA)^X_d \cdot \mH^i =
    \langle \mH, \mA, \mH^i\rangle^X_{0,d} =
    \int_X (\mH \star \mH^i)^X_d \cdot \mA = 0
  \]
  for all $d \geq 1$ and $i \geq 0$. Since $(\mH \star \mA)^X_d$ is a class of
  degree
  \[
    2 + r - 2d(r+L+1-|\mm|) < r \,,
  \]
  the Lefschetz theorem implies that $(\mH \star \mA)^X_d = 0$.
\end{proof}

\begin{remark}
  Primitive cohomology classes are not annihilated by quantum multiplication by
  arbitrary restricted classes. For example, it follows from \Corollary{pieriq2}
  below that $H^{r+L+1-|\mm|} = H \star H^{r+L-|\mm|} - \mm!\, q$, so
  \Corollary{Hmiddle} shows that
  \[
    H^{r+L+1-|\mm|} \star \mA = - \mm!\, q \mA
  \]
  whenever $\mA \in H^r(X)$ is primitive.
\end{remark}

\begin{prop}
  \label{prop:midmid}
  Let $\mA, \mB \in H^r(X)$ satisfy $\mH \cdot \mA = \mH \cdot \mB = 0$ and $\mA
  \cdot \mB = \mP$ in $H^*(X)$. Then, we have
  \[
    \mA \star \mB = \mm^{-1} \mH^{\star r} -
    \mm^{\mm-1}\, q \mH^{\star (|\mm|-L-1)} \,.
  \]
  In particular, $\mA \star \mB \in \QH(X)^\res$.
\end{prop}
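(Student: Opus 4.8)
The plan is to reduce the computation to \Lemma{deg2Nclass} applied to $\Gamma = \mA \star \mB$. Since $\mA,\mB \in H^r(X)$, the class $\mA\star\mB$ is homogeneous of degree $2r$ in the graded ring $\QH(X)$, so it is enough to verify the three hypotheses of \Lemma{deg2Nclass}: that $\mA\star\mB \in \QH(X)^\res$; that $\mA\star\mB \equiv \mm^{-1}\mH^r \bmod q$; and that $\mH \star (\mA\star\mB) = b\,q\,\mH^{\star(|\mm|-L)}$ for some $b\in\Q$. Granting these with $a = \mm^{-1}$ and $b = 0$, the lemma gives
\[
  \mA\star\mB \;=\; \mm^{-1}\mH^{\star r} + \bigl(b - \mm^{-1}\mm^{\mm}\bigr)\,q\,\mH^{\star(|\mm|-L-1)},
\]
and the bookkeeping identity $\mm^{-1}\mm^{\mm} = \mm^{\mm-1}$ then yields the asserted formula; the ``in particular'' is precisely the first hypothesis.

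Two of the three hypotheses are immediate. By the definition of the quantum product, the $q^0$-part of $\mA\star\mB$ is the classical product $\mA \cup \mB = \mP = \mm^{-1}\mH^r$, which gives the congruence with $a = \mm^{-1}$. For multiplication by $\mH$: since $\mH\cdot\mA = 0$ by hypothesis, \Corollary{Hmiddle} gives $\mH\star\mA = 0$ in $\QH(X)$, and hence, by associativity and commutativity of the quantum product, $\mH\star(\mA\star\mB) = (\mH\star\mA)\star\mB = 0$. Thus $b = 0$.

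The real work is showing $\mA\star\mB \in \QH(X)^\res$. Write $\mA\star\mB = \sum_{d\ge0} q^d (\mA\star\mB)^X_d$. The $d=0$ term is $\mP = \mm^{-1}\mH^r \in \mR$. For $d \ge 1$, the class $(\mA\star\mB)^X_d$ lies in cohomological degree $2r - 2d(r+L+1-|\mm|) < 2r$; since the primitive cohomology of $X$ is concentrated in the middle degree $r$, the Lefschetz hyperplane theorem and Poincar\'e duality show that every $H^k(X)$ with $k \ne r$ is spanned by a power of $\mH$, so $(\mA\star\mB)^X_d \in \mR$ automatically unless its degree is exactly $r$ --- which forces $r$ to be even, so this case never arises when $r$ is odd (nor under the sharper degree hypotheses of \Theorem{ccii}). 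In the remaining case $H^r(X) = \Q\,\mH^{r/2} \oplus H^r(X)_{\mathrm{prim}}$ is an orthogonal decomposition for the cup pairing, so it suffices to show $(\mA\star\mB)^X_d$ pairs to zero with every primitive class $\Gamma$:
\[
  \int_X (\mA\star\mB)^X_d \cup \Gamma \;=\; \langle \mA, \mB, \Gamma\rangle^X_{0,d},
\]
a genus-$0$ three-point Gromov--Witten invariant all of whose insertions are primitive. Here I would invoke deformation invariance of Gromov--Witten invariants: as $X$ varies in the (irreducible) universal family of complete intersections of type $\mm$, this number is computed by a trilinear form on primitive cohomology that is invariant under the monodromy group, and in this range the monodromy acts on primitive cohomology with no nonzero invariant trilinear form (Beauville), so it vanishes. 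This control of the primitive cohomology is the main obstacle; the remaining steps are formal.
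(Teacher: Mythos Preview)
Your proposal is correct and follows essentially the same route as the paper: reduce to \Lemma{deg2Nclass} via \Corollary{Hmiddle}, and handle membership in $\QH(X)^\res$ by showing that the three-point invariants with three primitive insertions vanish via monodromy. The paper makes two points explicit that you leave vague: the dimension constraint $r = 2d(r+L+1-|\mm|)$ with $r \geq 4$ forces $|\mm| \geq L+3$, ruling out complete intersections of two quadrics (a necessary exclusion for the large-monodromy theorem, which the paper attributes to Deligne rather than Beauville), and the vanishing of the invariant trilinear form is simply because $-\mathrm{Id}$ lies in the orthogonal monodromy group and acts by $-1$ on $3$-tensors.
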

\begin{proof}
  If $\mA \star \mB \in \QH(X)^\res$, then the Proposition follows immediately
  from \Corollary{Hmiddle} and \Lemma{deg2Nclass} with $a=\mm^{-1}$ and $b=0$.

  To prove $\mA \star \mB \in \QH(X)^\res$, we proceed by contradiction. If $\mA
  \star \mB \notin \QH(X)^\res$, then there exists a primitive class $\gamma \in
  H^r(X)$ for which
  \begin{equation}\label{eqn:ll33}
    \langle \mA,\mB,\gamma \rangle^X_{0,d}\neq 0
  \end{equation}
  for some $d\geq 1$. By the dimension constraint, we have
  \[
    3r = 2r+ 2d(r+L+1-|\mm|) \,.
  \]
  In particular, $r$ is even. Since $r\geq 3$ by assumption,
  \begin{equation}\label{eqn:vrr4}
    |\mm| = \left( \frac{2d-1}{2d}\right) r + L + 1 \geq L+3 \,.
  \end{equation}
  By inequality \eqn{vrr4}, $X$ cannot be a complete intersection of two
  quadrics (since then $|\mm|=4$ and $L=2$).

  Let $V = H^r(X)_{\text{prim}} \otimes_\Q \C$ be the primitive cohomology of
  $X$ with complex coefficients. Let $G \subset \GL(V)$ denote the algebraic
  monodromy group defined as the Zariski closure of monodromy on primitive
  cohomology obtained by letting $X$ vary in the full family of nonsingular
  complete intersections of degrees $\mm$ in $\bP^{r+L}$. Since $X$ has
  dimension $r \geq 3$ and is not a complete intersection of two quadrics, $G$
  is as large as possible by results of Deligne
  \cite[Thm.~4.4.1]{deligne:conjecture*2}, see also
  \cite[Prop.~4.2]{arguz.bousseau.ea:gromov-witten}. More precisely $G = \mO(V)$
  is the full orthogonal group of $V$ with respect to the Poincar\'e duality
  pairing if $r$ is even and $G = \Sp(V)$ if $r$ is odd. By the deformation
  invariance of Gromov-Witten theory, the 3-point Gromov-Witten bracket
  \[
    \langle -,-,- \rangle^X_{0,d} : V^{\otimes 3} \to \C
  \]
  is invariant under the action of $G$. But since $-\text{Id}\in G$ acts as $-1$
  on 3-tensors, we deduce that $\langle -,-,- \rangle^X_{0,d}$ vanishes on
  $V^{\otimes 3}$, which contradicts \eqn{ll33}.
\end{proof}

\begin{remark}
  Let $\mA, \mB \in H^r(X)$ be primitive classes such that $\mA \cdot \mB = 0
  \in H^*(X)$. We can choose a primitive class $\mB' \in H^r(X)$ such that $\mA
  \cdot \mB' = \mP$. Since \Proposition{midmid} implies that $\mA \star \mB' =
  \mA \star (\mB + \mB')$, we deduce that $\mA \star \mB = 0$.
\end{remark}

\subsection{A Pieri formula modulo $q^2$}

Define the polynomial $\Psi_{\mm} \in \Z[H_1,H_2]$ by
\[
  \Psi_{\mm} = \prod_{i=1}^L \prod_{j=0}^{m_i} (j H_1 + (m_i-j) H_2) \,.
\]
The total degree of $\Psi_{\mm}$ is $L+|{\mm}|$. Our calculations
in $\QH(X)$ will use the following result for the Gromov-Witten invariants of
$X$ in the class of a line,
\[
  \langle \mH^a, \mH^b\rangle^X_{0,1} =
  \int_{[\Mb_{0,2}(X,1)]^\vir} \ev_1^*(\mH^a) \cdot
  \ev_2^*(\mH^b) \,.
\]

\begin{prop}\label{prop:gw2pt}
  Let $|\mm| \leq r+L-1$, and let $a,b \geq 0$ satisfy $a+b =
  2r+L-|\mm|$. Then, we have
  \[
    \langle \mH^a, \mH^b\rangle^X_{0,1} =
    \coeff(\Psi_\mm, H_1^{r+L-a} H_2^{r+L-b}) \,.
  \]
\end{prop}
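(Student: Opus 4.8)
The plan is to compute the two‑point line invariant on the moduli of $2$‑pointed lines in the ambient $\bP^{r+L}$, reduce it by an Euler class to an intersection number on the Grassmannian of lines, and finish with a short symmetric‑function computation. Write $N=r+L$. Since $\bP^N$ is convex, $\Mb_{0,2}(\bP^N,1)$ is a smooth projective variety of dimension $2N$. First I would form the rank‑$(L+|\mm|)$ vector bundle $\cV_\mm$ on it with fiber $\bigoplus_{i=1}^L H^0(C,f^*\cO(m_i))$ over a stable map $f\colon(C,p_1,p_2)\to\bP^N$; this is an honest bundle because $H^1(C,f^*\cO(m_i))=0$ for every genus‑$0$ degree‑$1$ stable map, and the section given by the defining equations $f_i$ has zero locus $\Mb_{0,2}(X,1)$. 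The standard quantum Lefschetz comparison (valid here since $X$ is Fano of index $\geq 2$) gives $[\Mb_{0,2}(X,1)]^{\vir}=e(\cV_\mm)\cap[\Mb_{0,2}(\bP^N,1)]$, whence
\[
  \langle\mH^a,\mH^b\rangle^X_{0,1}=\int_{\Mb_{0,2}(\bP^N,1)}e(\cV_\mm)\cdot\ev_1^*\mH^a\cdot\ev_2^*\mH^b\,.
\]

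Next I would use the model $\Mb_{0,2}(\bP^N,1)\cong\bP(\cS)\times_G\bP(\cS)=:\mathbb F$, where $G=\Gr(2,N+1)$ and $\bP(\cS)\to G$ is the $\bP^1$‑bundle whose points over a line $\ell$ are the points of $\ell$: a genus‑$0$ degree‑$1$ stable map with two markings is exactly a line $\ell$ with two points on it (a bubble forming when the points collide), and this is a canonical isomorphism. Under it, $\ev_i^*\mH$ is the relative hyperplane class $\zeta_i$ of the $i$‑th factor, and the fiber of $\cV_\mm$ over $[\ell,p_1,p_2]$ is $\bigoplus_i H^0(\ell,\cO_\ell(m_i))=\bigoplus_i\operatorname{Sym}^{m_i}\cS^\vee|_\ell$ — an identification that one must check persists over the contracted‑bubble strata — so $\cV_\mm=\pi^*\bigoplus_i\operatorname{Sym}^{m_i}\cS^\vee$ for $\pi\colon\mathbb F\to G$. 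If $x_1,x_2$ are the Chern roots of $\cS^\vee$, then $\operatorname{Sym}^{m_i}\cS^\vee$ has Chern roots $\{jx_1+(m_i-j)x_2:0\le j\le m_i\}$, so $e(\cV_\mm)=\pi^*\Psi_\mm(x_1,x_2)$ directly from the definition of $\Psi_\mm$. Factoring $\pi=\rho\circ\rho_1$ with $\rho_1\colon\mathbb F\to\bP(\cS)$ forgetting the second point, and using the $\bP^1$‑bundle pushforwards $\rho_{1*}(\zeta_2^{\,k})=\rho^*h_{k-1}(x_1,x_2)$ and $\rho_*(\zeta^{\,k})=h_{k-1}(x_1,x_2)$ (with $h_j(x_1,x_2)=x_1^j+x_1^{j-1}x_2+\cdots+x_2^j$ and $h_{-1}=0$), two applications of the projection formula give
\[
  \langle\mH^a,\mH^b\rangle^X_{0,1}=\int_{\Gr(2,N+1)}\Psi_\mm(x_1,x_2)\,h_{a-1}(x_1,x_2)\,h_{b-1}(x_1,x_2)\,.
\]

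To evaluate this I would invoke the Vandermonde integration formula $\int_{\Gr(2,N+1)}g(x_1,x_2)=\coeff_{x_1^N x_2^{N-1}}\!\big(g(x_1,x_2)(x_1-x_2)\big)$ for symmetric $g$ homogeneous of degree $2(N-1)$ (which follows from $\int_{\Gr(2,N+1)}s_\lambda=\delta_{\lambda,((N-1)^2)}$ and $s_\lambda(x_1,x_2)(x_1-x_2)=x_1^{\lambda_1+1}x_2^{\lambda_2}-x_1^{\lambda_2}x_2^{\lambda_1+1}$). Since $h_{b-1}(x_1,x_2)(x_1-x_2)=x_1^b-x_2^b$, the integral becomes $\coeff_{x_1^N x_2^{N-1}}\!\big(\Psi_\mm\,h_{a-1}\,(x_1^b-x_2^b)\big)$. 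Writing $\Psi_\mm=\sum_c\psi_c\,x_1^c x_2^{L+|\mm|-c}$ — symmetric, $\psi_c=\psi_{L+|\mm|-c}$, with $\psi_c=0$ unless $L\le c\le|\mm|$ because each factor $\prod_{j=0}^{m_i}(jx_1+(m_i-j)x_2)$ is divisible by $x_1x_2$ — and expanding $h_{a-1}=\sum_{s=0}^{a-1}x_1^s x_2^{a-1-s}$, the coefficient of $x_1^N x_2^{N-1}$ comes out as $\sum_{s=0}^{a-1}\psi_{N-b-s}-\sum_{s=0}^{a-1}\psi_{N-s}$; re‑indexing the first sum and applying $\psi_c=\psi_{L+|\mm|-c}$ to the second turns this into the telescoping difference $\psi_{|\mm|-r+a}-\psi_{|\mm|-r}$. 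The hypothesis $|\mm|\le r+L-1$ is used precisely here: it forces $|\mm|-r\le L-1<L$, so $\psi_{|\mm|-r}=0$; and since $|\mm|-r+a=r+L-b$ one gets $\langle\mH^a,\mH^b\rangle^X_{0,1}=\psi_{r+L-b}=\psi_{r+L-a}=\coeff(\Psi_\mm,H_1^{r+L-a}H_2^{r+L-b})$, which is the assertion.

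The main obstacle I anticipate is the second step: checking that the identification $\Mb_{0,2}(\bP^N,1)\cong\bP(\cS)\times_G\bP(\cS)$ and the isomorphism $\cV_\mm\cong\pi^*\bigoplus_i\operatorname{Sym}^{m_i}\cS^\vee$ hold scheme‑theoretically and compatibly over the boundary strata where a component is contracted, together with a clean justification of the virtual‑class comparison in the first step. Once the integral over $\Gr(2,N+1)$ has been reached, the remainder is routine manipulation of Schur and complete homogeneous symmetric polynomials.
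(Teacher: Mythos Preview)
Your argument is correct. The worries you flag about the boundary of $\Mb_{0,2}(\bP^N,1)$ and the bundle $\cV_\mm$ resolve easily: over a nodal domain $C_0 \cup C_1$ with $C_1$ contracted to a point of the line $\ell$, restriction to the non-contracted component $C_0 \cong \ell$ gives an isomorphism $H^0(C, f^*\cO(m_i)) \cong H^0(\ell, \cO_\ell(m_i))$, so $\cV_\mm$ is globally the pullback from $G$; the virtual-class comparison is the standard quantum Lefschetz principle for complete intersections in $\bP^N$.

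The paper takes a genuinely different and more elementary route. Rather than pushing to $\Gr(2,N+1)$ and doing a Schur-polynomial computation, it first deforms $X$ to be general (by deformation invariance of Gromov--Witten invariants) so that $\Mb_{0,2}(X,1)$ is smooth of the expected dimension, then chooses general linear subspaces $E_1, E_2 \subset \bP^{r+L}$ of codimensions $a$ and $b$. Since $a+b>r$ forces $E_1 \cap E_2 \cap X = \emptyset$, the lines in $X$ meeting both $E_i$ are parametrized by pairs $(P_1,P_2) \in E_1 \times E_2$ for which the line $\overline{P_1 P_2}$ lies in $X$. Expanding each defining equation $f_i(sP_1+tP_2)$ in $s,t$ produces bihomogeneous sections $f_{i,j}$ of bidegree $(j,m_i-j)$ on $E_1 \times E_2$, and their common zero locus is a reduced complete intersection of class $\Psi_\mm(H_1,H_2)$; integrating over $E_1 \times E_2 \cong \bP^{r+L-a} \times \bP^{r+L-b}$ immediately extracts the coefficient of $H_1^{r+L-a} H_2^{r+L-b}$, with no further manipulation needed. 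Your approach trades this direct enumerative count for standard machinery (Segre pushforwards on $\bP^1$-bundles and the Vandermonde formula on the Grassmannian), at the cost of the telescoping computation at the end; in exchange it works with the virtual class directly without moving $X$ to general position. The paper's approach is shorter and makes the appearance of $\Psi_\mm$ transparent from the start.
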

\begin{proof}
  The Gromov-Witten invariant in question is the same for all complete
  intersections of dimension $r$ and degrees $\mm$, so we may assume that $X$ is
  general. This implies that the moduli space $\Mb_{0,2}(X,1)$ is a nonsingular
  projective variety of dimension $2r+L-|\mm|$, and the virtual fundamental
  class $[\Mb_{0,2}(X,1)]^\vir$ coincides with the usual fundamental class.

  Let $E_1, E_2 \subset \bP^{r+L}$ be general linear subspaces of codimensions
  $a$ and $b$ respectively. Using the transitive action of $\GL(r+L+1)$ on
  $\bP^{r+L}$, Kleiman's Bertini Theorem implies that $\langle H^a,
  H^b\rangle^X_{0,1}$ is equal to the number of lines contained in $X$ that meet
  both $E_1$ and $E_2$. The assumptions imply that $a + b > r$, so we have
  \[
    E_1 \cap E_2 \cap X = \emptyset\,.
  \]
  Therefore, no line in $X$ meets $E_1$ and $E_2$ in the same point. The variety
  of lines in $X$ meeting $E_1$ and $E_2$ can be identified with the set
  \[
    \{ (P_1,P_2) \in E_1 \times E_2 \mid f_i(s P_1 + t P_2) = 0
    \text{ for all $1 \leq i \leq L$ and $(s:t) \in \bP^1$} \} \,,
  \]
  where $(f_1,\ldots,f_L)$ are the defining equations for $X \subset \bP^{r+L}$.
  We write
  \[
    f_i(s P_1 + t P_2) = \sum_{j=0}^{m_i} f_{i,j}(P_1,P_2)\, s^j t^{m_i-j} \,,
  \]
  where $f_{i,j} \in H^0(E_1 \times E_2, \cO(j) \boxtimes \cO(m_i-j))$ is a
  section of the external tensor product of $\cO_{E_1}(j)$ and
  $\cO_{E_2}(m_i-j)$. We deduce that $\langle H^a,H^b\rangle^X_{0,1}$ is the
  number of points in the subscheme
  \[
    Z = V(\{f_{i,j} : 0 \leq j \leq m_i, 1 \leq i \leq L\})
    \subset E_1 \times E_2 \,.
  \]

  The standard construction \cite{fulton.pandharipande:notes} of $\barM_{0,2}(X,1)$ for a general
  complete intersection $X$ shows that the open subscheme $\ev^{-1}(X \times X
  \ssm \Delta X)$ is isomorphic to $V(\{f_{i,j}\}) \subset \bP^{r+L} \times
  \bP^{r+L} \ssm \Delta \bP^{r+L}$, so $Z$ is isomorphic to $\ev_1^{-1}(E_1)
  \cap \ev_2^{-1}(E_2) \subset \barM_{0,2}(X,1)$. In particular, $Z$ is a
  reduced complete intersection of class
  \[
    \Psi_{\mm} \in H^{2L+2|\mm|}(E_1\times E_2) \,,
  \]
  where the variables $H_1$ and $H_2$ are viewed as the generators of $\Pic(E_1
  \times E_2)$. The number of points of $Z$ is then the integral of
  $\Psi_{\mm}$ over $E_1\times E_2$.
\end{proof}

For $i \in \Z$, we define the following constant for notational convenience:
\[
  c_i = \mm^{-1} \coeff(\Psi_{\mm}, H_1^i
  H_2^{L+|\mm|-i}) \,.
\]

\begin{prop}\label{prop:ciprop}
  The constants $c_i$ satisfy the following basic properties.\smallskip

  \noin{\rm(a)} \ $c_i \neq 0$ if and only if $L \leq i \leq
  |\mm|$,\smallskip

  \noin{\rm(b)} \ $c_{L+|\mm|-i} = c_i$ for all $i \in \Z$,\smallskip

  \noin{\rm(c)} \ $c_L = c_{|\mm|} = \mm!$,\smallskip

  \noin{\rm(d)} \ $\sum_{i=L}^{|\mm|} c_i =
  \mm^{\mm}$.\smallskip
\end{prop}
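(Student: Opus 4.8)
The plan is to work entirely in the polynomial ring $\Z[H_1,H_2]$ and exploit the explicit factored shape of $\Psi_{\mm}$. Write $P_i = \prod_{j=0}^{m_i}(j H_1 + (m_i-j) H_2)$, so that $\Psi_{\mm} = \prod_{i=1}^L P_i$ is homogeneous of degree $L+|\mm|$ and $c_i$ is, up to the nonzero scalar $\mm^{-1}$, the coefficient of $H_1^i H_2^{L+|\mm|-i}$ in $\Psi_{\mm}$. The key structural fact I would record first is that the factors $j=0$ and $j=m_i$ of $P_i$ contribute $m_i H_2$ and $m_i H_1$, so that $P_i = m_i^2\, H_1 H_2 \prod_{j=1}^{m_i-1}(j H_1 + (m_i-j) H_2)$, where the remaining product is a product of linear forms with \emph{strictly positive} coefficients.

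For part (b), I would note that $j \mapsto m_i - j$ merely permutes the factors of $P_i$, so each $P_i$, hence $\Psi_{\mm}$, is invariant under $H_1 \leftrightarrow H_2$; comparing coefficients of $H_1^i H_2^{L+|\mm|-i}$ and $H_1^{L+|\mm|-i} H_2^i$ yields $c_i = c_{L+|\mm|-i}$ at once. For part (a), the factored form above shows $P_i = \sum_{k=1}^{m_i} a_{i,k}\, H_1^k H_2^{m_i+1-k}$ with every $a_{i,k} > 0$ and no terms outside $1 \le k \le m_i$; multiplying over $i$, the coefficient of $H_1^\ell$ in $\Psi_{\mm}$ is a sum of products of positive numbers indexed by decompositions $\ell = k_1 + \cdots + k_L$ with $1 \le k_i \le m_i$, and such a decomposition exists precisely when $L \le \ell \le |\mm|$. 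Since $\mm^{-1} > 0$, this gives (a) (in fact with $c_\ell > 0$ throughout that range).

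For part (c), I would compute the extreme coefficient $a_{i,1} = m_i^2 \prod_{j=1}^{m_i-1}(m_i-j) = m_i^2 (m_i-1)! = m_i \cdot m_i!$, and observe that $\ell = L$ forces $k_i = 1$ for every $i$, so the coefficient of $H_1^L H_2^{|\mm|}$ in $\Psi_{\mm}$ is $\prod_i a_{i,1} = \mm^1\, \mm!$; hence $c_L = \mm^{-1}\mm^1\mm! = \mm!$, and $c_{|\mm|} = c_L$ by (b). For part (d), using (a) to extend the sum over all exponents, $\sum_{i=L}^{|\mm|} c_i = \mm^{-1}\, \Psi_{\mm}(1,1) = \mm^{-1} \prod_{i=1}^L \prod_{j=0}^{m_i} m_i = \mm^{-1} \prod_i m_i^{m_i+1} = \mm^{\mm+1}\mm^{-1} = \mm^{\mm}$.

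None of these steps is a genuine obstacle; the proposition is pure bookkeeping with two-variable generating functions. The only point demanding a little care is the treatment of the degenerate factors $j=0$ and $j=m_i$ — it is precisely these that push the extreme coefficients of $P_i$ up to $m_i\cdot m_i!$ rather than $m_i!$, and that make the normalizing factor $\mm^{-1}$ in the definition of $c_i$ cancel cleanly in (c) and (d).
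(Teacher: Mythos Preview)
Your proof is correct and follows essentially the same approach as the paper: the paper simply states that (a), (b), (c) are immediate from the definition of $\Psi_{\mm}$ and that (d) follows from $\Psi_{\mm}(1,1) = \mm^{\mm+1}$, and you have spelled out exactly those immediate verifications with the symmetry $j \mapsto m_i - j$, the positivity of the interior linear factors, and the evaluation at $(1,1)$.
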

\begin{proof}
  Parts (a), (b), and (c) are immediate from the definition of
  $\Psi_{\mm}$, and part (d) holds because $\Psi_{\mm}(1,1) =
  \mm^{\mm+1}$.
\end{proof}

\begin{cor}\label{cor:pieriq2}
  For $0 \leq i \leq r$, we have
  \[
    \mH \star \mH^i \ \equiv \ \mH^{i+1} +\,
    c_{i-r+|\mm|}\, q \mH^{i-r-L+|\mm|}
    \mod \ q^2 \,.
  \]
\end{cor}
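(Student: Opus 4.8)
The plan is to read off $\mH \star \mH^i$ from its $q$-expansion
\[
  \mH \star \mH^i = \sum_{d \geq 0} q^d\, (\mH \star \mH^i)^X_d \,,
\]
keeping only the terms $d=0$ and $d=1$, since everything with $d\geq 2$ is killed modulo $q^2$. The $d=0$ term is the classical product $\mH\cdot\mH^i = \mH^{i+1}$ (which is $0$ when $i=r$), so the entire problem reduces to identifying $(\mH\star\mH^i)^X_1 \in H^*(X)$.

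First I would invoke \Proposition{graber}: since $\mH$ and $\mH^i$ both lie in $\mR = \Span\{1,\mH,\dots,\mH^r\}$, their quantum product lies in $\QH(X)^\res = \mR\otimes_\Q\Q[q]$, so $(\mH\star\mH^i)^X_1 \in \mR$. Because $\deg q = 2(r+L+1-|\mm|)$, this class has cohomological degree $2(i-r-L+|\mm|)$; since the degree-$2k$ part of $\mR$ is spanned by $\mH^k$ (and is $0$ for $k\notin\{0,\dots,r\}$), it must be a scalar multiple of $\mH^{i-r-L+|\mm|}$, and is simply $0$ when $i-r-L+|\mm|$ lies outside $\{0,\dots,r\}$ — which matches \Proposition{ciprop}(a), since then $c_{i-r+|\mm|}=0$ as well. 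Write $(\mH\star\mH^i)^X_1 = c\,\mH^{i-r-L+|\mm|}$.

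Next I would compute $c$ by pairing against $\mH^{2r+L-|\mm|-i}$; note that $(i-r-L+|\mm|)+(2r+L-|\mm|-i)=r$ and $\int_X \mH^r = \mm^1$, so the pairing returns $c\,\mm^1$. By the definition of the quantum product it also equals the genus-$0$, degree-$1$ three-point invariant $\langle \mH,\mH^i,\mH^{2r+L-|\mm|-i}\rangle^X_{0,1}$, which by the divisor equation (the line has degree $1$ against $\mH$) reduces to the two-point invariant $\langle \mH^i,\mH^{2r+L-|\mm|-i}\rangle^X_{0,1}$. Applying \Proposition{gw2pt} with $a=i$, $b=2r+L-|\mm|-i$ (so $a+b=2r+L-|\mm|$) evaluates this as $\coeff(\Psi_\mm, H_1^{r+L-i}H_2^{|\mm|-r+i})$. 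The last step is to recognize this as the stated constant: $\Psi_\mm$ is symmetric under $H_1\leftrightarrow H_2$ (substituting $j\mapsto m_i-j$ permutes the factors of each inner product), so $\coeff(\Psi_\mm, H_1^{r+L-i}H_2^{|\mm|-r+i}) = \coeff(\Psi_\mm, H_1^{i-r+|\mm|}H_2^{L+|\mm|-(i-r+|\mm|)})$, i.e.\ $c = c_{i-r+|\mm|}$, which gives the claimed formula.

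There is no serious obstacle here; the one subtlety is that $(\mH\star\mH^i)^X_1$ could a priori pick up a primitive component when $i-r-L+|\mm| = r/2$, but \Proposition{graber} rules this out, after which the coefficient is pinned down by the single intersection number above. The remainder is just bookkeeping of degrees together with the $H_1\leftrightarrow H_2$ symmetry of $\Psi_\mm$.
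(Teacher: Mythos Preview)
Your proof is correct and follows essentially the same route as the paper's: identify the $q$-coefficient as a multiple of $\mH^{i-r-L+|\mm|}$, use the divisor equation to pass to the two-point invariant, apply \Proposition{gw2pt}, and then invoke the $H_1\leftrightarrow H_2$ symmetry of $\Psi_\mm$. The only cosmetic differences are that you make the appeal to \Proposition{graber} explicit (the paper leaves it implicit in its first displayed line) and that you verify the symmetry of $\Psi_\mm$ by hand rather than citing \Proposition{ciprop}(b), which packages the same fact.
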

\begin{proof}
  By definition of the quantum product,
  \[
    \mH \star \mH^i \ \equiv \ \mH^{i+1} +\, q\,
    \langle \mH, \mH^i, \mH^{2r-i+L-|\mm|}
    \rangle_{0,1}^X \, \mm^{-1}\,
    \mH^{i-r-L+|\mm|} \mod q^2 \,.
  \]
  By the divisor equation and the definition of $c_{r+L-i}$,
  \[
    \mH \star \mH^i \ \equiv \ \mH^{i+1} +\, c_{r+L-i}\, q \mH^{i-r-L+|\mm|}
    \mod q^2 \,,
  \]
  which is equivalent to the claim by \Proposition{ciprop}(b).
\end{proof}

\begin{cor}\label{cor:HaHbP}
  Let $a, b \geq 1$ satisfy $a+b = r+L+1-|\mm|$. Then,
  \[
    \langle \mH^a, \mH^b, \mP\rangle_{0,1}^X = \mm! \,.
  \]
\end{cor}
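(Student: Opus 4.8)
The plan is to convert the three-point invariant into a single coefficient of the quantum product ring and then apply the Pieri-type formula \Corollary{pieriq2}. Writing $\mP = \mm^{-1}\mH^r$ and using the definition of the quantum product, one has
\[
  \langle \mH^a, \mH^b, \mP\rangle^X_{0,1} = \int_X (\mH^a \star \mH^b)^X_1 \cdot \mP \,,
\]
where $(\mH^a \star \mH^b)^X_1 \in H^*(X)$ is the coefficient of $q$ in the quantum product of the classical powers $\mH^a$ and $\mH^b$. Since $a+b = r+L+1-|\mm|$, this coefficient has (classical) degree $2(a+b) - \deg(q) = 0$, so it is a scalar multiple of $1 \in H^0(X)$; it remains only to pin down that scalar, after which $\int_X \mP = 1$ finishes the argument.

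The key point is that, because $a, b \geq 1$, we have $a, b \leq a+b-1 = r+L-|\mm|$, hence $2a, 2b < \deg(q)$. As $\mH^{\star a}$ is homogeneous of degree $2a$ in the graded ring $\QH(X)$ and $q$ has strictly larger degree, $\mH^{\star a}$ cannot involve any positive power of $q$, so $\mH^{\star a} = \mH^a$ (the classical power); likewise $\mH^{\star b} = \mH^b$ and $\mH^{\star(r+L-|\mm|)} = \mH^{r+L-|\mm|}$. Therefore
\[
  \mH^a \star \mH^b = \mH^{\star a}\star\mH^{\star b} = \mH^{\star(r+L+1-|\mm|)}
  = \mH \star \mH^{\star(r+L-|\mm|)} = \mH \star \mH^{\,r+L-|\mm|}\,.
\]

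Now I would apply \Corollary{pieriq2} with $i = r+L-|\mm|$, which lies in the permitted range $0 \leq i \leq r$ because $L \leq |\mm| \leq r+L-1$. For this $i$ we have $i-r+|\mm| = L$ and $i-r-L+|\mm| = 0$, so the Pieri formula gives
\[
  \mH \star \mH^{\,r+L-|\mm|} \ \equiv\ \mH^{\,r+L+1-|\mm|} + c_L\, q \pmod{q^2}\,,
\]
and \Proposition{ciprop}(c) gives $c_L = \mm!$. Hence $(\mH^a \star \mH^b)^X_1 = \mm!\cdot 1$, and $\langle \mH^a, \mH^b, \mP\rangle^X_{0,1} = \int_X \mm!\cdot\mP = \mm!$.

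I do not anticipate a genuine obstacle; the statement falls out of the structural results already assembled in this section. The only step needing care is the degree bookkeeping in the second paragraph: it is precisely the vanishing of the quantum corrections to $\mH^{\star a}$ and $\mH^{\star b}$, forced by $a,b < r+L+1-|\mm|$, that collapses $\mH^a\star\mH^b$ to $\mH^{\star(r+L+1-|\mm|)}$ and allows the Pieri formula to be applied only once rather than iterated. As a consistency check, the same number arises geometrically: for general $X$, general $x\in X$, and general linear subspaces $E_1,E_2\subset\bP^{r+L}$ of codimensions $a$ and $b$, the invariant counts lines in $X$ through $x$ meeting $E_1$ and $E_2$; parametrizing such a line by its direction $v\in\bP(\C^{r+L+1}/\langle x\rangle)\cong\bP^{r+L-1}$ turns this into the intersection number of $(a-1)+(b-1)$ hyperplanes with the complete intersection cut out, for each $i$, by the $m_i$ conditions of degrees $1,2,\dots,m_i$ in $v$ expressing that $\overline{xv}\subset V(f_i)$, and Bézout yields $\prod_i m_i! = \mm!$.
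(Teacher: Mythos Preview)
Your proof is correct and follows essentially the same approach as the paper: both reduce the product $\mH^a \star \mH^b$ to $\mH \star \mH^{r+L-|\mm|}$ via the degree bound $a,b < r+L+1-|\mm|$ (which forces $\mH^{\star a}=\mH^a$ and $\mH^{\star b}=\mH^b$), then apply \Corollary{pieriq2} and \Proposition{ciprop}(c) to read off $c_L=\mm!$. Your geometric consistency check via B\'ezout is a pleasant addition but is not present in the paper's argument.
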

\begin{proof}
  We have $\mH^a \star \mH^b = \mH \star \mH^{a-1} \star \mH^b = \mH \star
  \mH^{r+L-|\mm|}$, where the last equality follows since
  \[
    a-1+b< r+L+1-|\mm| \,.
  \]
  By \Corollary{pieriq2} and \Proposition{ciprop},
  \[
    \langle \mH^a, \mH^b, \mP \rangle^X_{0,1} =
    \langle \mH, \mH^{r+L-|\mm|}, \mP \rangle^X_{0,1} =
    c_L = \mm! \,,
  \]
  as claimed.
\end{proof}

\begin{cor}\label{cor:HN}
  We have $\mH^r \equiv \mH^{\star r} +\, (\mm! -
  \mm^{\mm})\, q \mH^{\star (|\mm|-L-1)} \mod q^2$.
\end{cor}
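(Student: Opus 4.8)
The plan is to compute $\mH^{\star r}$ modulo $q^2$ by iterating the Pieri formula of \Corollary{pieriq2} and then to solve the resulting congruence for $\mH^r$. Set $d_q=r+L+1-|\mm|=\frac{1}{2}\deg(q)$. Since $\mH^{\star k}$ and the classical power $\mH^k$ are homogeneous of degree $2k$ in $\QH(X)^{\res}$ (using \Proposition{graber}) and agree modulo $q$, and since a degree-$2k$ class divisible by $q$ is a scalar multiple of $q\,\mH^{k-d_q}$ modulo $q^2$, there is for each $0\le k\le r$ a unique $\mu_k\in\Q$ with
\[
  \mH^{\star k}\ \equiv\ \mH^k\ +\ \mu_k\,q\,\mH^{k-d_q}\pmod{q^2}\,,
\]
where a power of $\mH$ with negative exponent is read as $0$. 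First I would prove this and compute $\mu_k$ by induction on $k$. The base cases are $\mu_0=\mu_1=0$; that $\mu_1=0$ uses $c_{|\mm|-r}=0$, which follows from \Proposition{ciprop}(a) and \eqn{findex}. For the step, write $\mH^{\star k}=\mH\star\mH^{\star(k-1)}$, substitute the inductive formula for $\mH^{\star(k-1)}$, note that $q\,\mH\star(\text{class})\equiv q\,\mH\cdot(\text{class})\pmod{q^2}$, and expand $\mH\star\mH^{k-1}$ by \Corollary{pieriq2} with $i=k-1$. Both correction terms then carry the same power $\mH^{k-d_q}$, and comparing coefficients gives $\mu_k=\mu_{k-1}+c_{k-1-r+|\mm|}$, hence
\[
  \mu_k\ =\ \sum_{i=1}^{k-1}c_{\,i-r+|\mm|}\,.
\]

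Next I would evaluate this at $k=r$. Reindexing, $\mu_r=\sum_{j=|\mm|-r+1}^{|\mm|-1}c_j$; by \Proposition{ciprop}(a) the term $c_j$ is nonzero only for $L\le j\le|\mm|$, and \eqn{findex} is precisely the inequality $|\mm|-r+1\le L$, so the sum reduces to $\sum_{j=L}^{|\mm|-1}c_j=\bigl(\sum_{j=L}^{|\mm|}c_j\bigr)-c_{|\mm|}=\mm^{\mm}-\mm!$ by parts (d) and (c) of \Proposition{ciprop}. Since $r-d_q=|\mm|-L-1$, the displayed congruence at $k=r$ reads $\mH^{\star r}\equiv\mH^r+(\mm^{\mm}-\mm!)\,q\,\mH^{|\mm|-L-1}\pmod{q^2}$. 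Finally, $\mH^{|\mm|-L-1}\equiv\mH^{\star(|\mm|-L-1)}\pmod q$ allows replacing the former by the latter inside a congruence modulo $q^2$, and rearranging gives $\mH^r\equiv\mH^{\star r}+(\mm!-\mm^{\mm})\,q\,\mH^{\star(|\mm|-L-1)}\pmod{q^2}$, which is the assertion.

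The whole argument is bookkeeping, so the one place that needs attention is the support of the telescoping sum: it is exactly \eqn{findex} (via $|\mm|-r+1\le L$) and \Proposition{ciprop}(a) that collapse it to $L\le j\le|\mm|-1$ and make the leftover term equal $c_{|\mm|}=\mm!$ rather than something larger; the degenerate case $|\mm|=L$ (where $X=\bP^r$, both correction terms vanish, and $\mm!=\mm^{\mm}=1$) is harmless. Alternatively one can avoid the induction: a short computation with the divisor axiom and \Corollary{HaHbP} (together with $\mH^r=\mm^1\mP$ and $\mH\star\mH^r\in\QH(X)^{\res}$) gives $\mH\star\mH^r\equiv\mm!\,q\,\mH^{\star(|\mm|-L)}\pmod{q^2}$, and comparing this with $\mH\star\bigl(\mH^{\star r}+\lambda\,q\,\mH^{\star(|\mm|-L-1)}\bigr)=(\mm^{\mm}+\lambda)\,q\,\mH^{\star(|\mm|-L)}$ — where \eqn{magic} supplies $\mH\star\mH^{\star r}=\mH^{\star(r+1)}=\mm^{\mm}q\,\mH^{\star(|\mm|-L)}$ — forces $\lambda=\mm!-\mm^{\mm}$; this is \Lemma{deg2Nclass} run modulo $q^2$.
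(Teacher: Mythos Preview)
Your proposal is correct. The paper's proof, however, is shorter and matches your alternative rather than your main argument: it applies \Corollary{pieriq2} once at $i=r$ (using $\mH^{r+1}=0$) to get $\mH\star\mH^r\equiv c_{|\mm|}\,q\,\mH^{|\mm|-L}\pmod{q^2}$, then invokes \Lemma{deg2Nclass} with $a=1$ and $b=c_{|\mm|}=\mm!$. Your main route---iterating \Corollary{pieriq2} to obtain $\mH^{\star k}\equiv\mH^k+\mu_k\,q\,\mH^{k-d_q}\pmod{q^2}$ with $\mu_k=\sum_{j\le k-1}c_{j-r+|\mm|}$---is exactly the formula the paper derives and uses later in the proof of \Theorem{HDeltaq2}, so you have anticipated that computation; but for \Corollary{HN} itself the single application of \Lemma{deg2Nclass} bypasses the telescoping sum entirely. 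In your alternative sketch, note that the paper reaches $\mH\star\mH^r\equiv\mm!\,q\,\mH^{|\mm|-L}\pmod{q^2}$ directly from \Corollary{pieriq2} rather than via \Corollary{HaHbP} and the divisor axiom, which is a slightly more economical path to the same input for \Lemma{deg2Nclass}.
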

\begin{proof}
  The result follows from \Lemma{deg2Nclass} and \Proposition{ciprop}, as we
  have
  \[
    \mH \star \mH^r \equiv c_{|\mm|}\, q \mH^{|\mm|-L} \mod q^2
  \]
  by \Corollary{pieriq2}.
\end{proof}

\subsection{Quantum Euler class}

Let $X = V(f_1,\dots,f_L) \subset \bP^{r+L}$ be a nonsingular complete
intersection of dimension $r\geq 3$ and degrees $\mm=(m_1,\ldots, m_L)$
satisfying
\[
  |\mm| \leq r + L-1 \,.
\]
Our next results determine the quantum Euler class $\mE$ of $X$
modulo $q^2$.

\begin{thm}
  \label{thm:HDeltaq2}
  We have $\mE \in \QH(X)^\res$ and
  \[
    \mH \star \mE \equiv
    (r+L+1-|\mm|)\, \mm^{\mm-1}\, q \mH^{|\mm|-L}
    \mod q^2 \,.
  \]
\end{thm}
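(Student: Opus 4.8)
The plan is to reduce everything to \Proposition{midmid}, \Proposition{graber}, \Corollary{pieriq2}, \Corollary{Hmiddle} and \eqn{magic}, plus one bookkeeping identity about the constants $c_i$. Write $d = r+L+1-|\mm| = \tfrac12\deg(q)$, so that $0 \le |\mm|-L = r+1-d \le r-1$; the degenerate case $|\mm| = L$ (where $X\cong\bP^r$) is elementary, so I assume $|\mm| > L$. First I would choose a basis of $H^*(X)$ adapted to the Lefschetz decomposition: the classes $\{\mH^a : 0\le a\le r\}$ for the restricted part, together with any basis $\{\gamma_\al\}$ of the primitive cohomology $H^r(X)_{\mathrm{prim}}$, which is Poincar\'e-orthogonal to the $\mH^a$. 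Since $\int_X\mH^r = \mm^1$, the dual basis is $\{\mm^{-1}\mH^{r-a}\}$ together with a Poincar\'e-dual basis $\{\gamma_\al^\vee\}$ of $H^r(X)_{\mathrm{prim}}$ (again primitive). As $\mE = \sum_j\gamma_j^\vee\star\gamma_j$ is basis-independent,
\[
  \mE = \mm^{-1}\sum_{a=0}^r \mH^{r-a}\star\mH^a + \sum_\al\gamma_\al^\vee\star\gamma_\al .
\]
The first sum lies in $\QH(X)^\res$ by \Proposition{graber}. For each primitive term, $\gamma_\al$ and $\gamma_\al^\vee$ are primitive of middle degree with $\gamma_\al\cup\gamma_\al^\vee = \mP$, so \Proposition{midmid}---applied after possibly interchanging the two factors, and allowing a sign from supercommutativity of $\star$ when $r$ is odd, since then $\gamma_\al^\vee\cup\gamma_\al = -\mP$---gives $\gamma_\al^\vee\star\gamma_\al = \pm\big(\mm^{-1}\mH^{\star r} - \mm^{\mm-1}q\,\mH^{\star(|\mm|-L-1)}\big)\in\QH(X)^\res$. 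This proves the first assertion, $\mE\in\QH(X)^\res$.

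Next I would note that the primitive part of $\mE$ is killed by quantum multiplication by $\mH$. Indeed $\mH\star\gamma_\al = \mH\star\gamma_\al^\vee = 0$ by \Corollary{Hmiddle}, so $\mH\star(\gamma_\al^\vee\star\gamma_\al)=0$ (equivalently, $\mH\star\big(\mm^{-1}\mH^{\star r} - \mm^{\mm-1}q\,\mH^{\star(|\mm|-L-1)}\big) = \mm^{\mm-1}q\,\mH^{\star(|\mm|-L)} - \mm^{\mm-1}q\,\mH^{\star(|\mm|-L)} = 0$ by \eqn{magic}). Hence $\mH\star\mE = \mm^{-1}\sum_{a=0}^r\mH\star\mH^{r-a}\star\mH^a$, with no primitive contribution at all.

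The core of the argument is then a comparison of classical and quantum powers of $\mH$ modulo $q^2$: iterating \Corollary{pieriq2} I would prove that, for $0\le k\le r$,
\[
  \mH^k \equiv \mH^{\star k} + \psi_k\,q\,\mH^{\star(k-d)} \mod q^2,
  \qquad \psi_k = -\!\!\sum_{i=L}^{k-1-r+|\mm|}\!\! c_i
\]
(the empty sum, occurring when $k<d$, being $0$); this generalizes \Corollary{HN}, the case $k=r$. Substituting this for both $\mH^{r-a}$ and $\mH^a$, discarding the $q^2$ cross-terms, and using $\mH^{\star(r+1)} = \mm^{\mm}q\,\mH^{\star(|\mm|-L)}$ from \eqn{magic} yields
\[
  \mH\star\mH^{r-a}\star\mH^a \equiv \big(\mm^{\mm}+\psi_a+\psi_{r-a}\big)\,q\,\mH^{\star(|\mm|-L)} \mod q^2 .
\]
Summing over $a$ and invoking the symmetry $c_i = c_{L+|\mm|-i}$ together with $\sum_{i=L}^{|\mm|}c_i = \mm^{\mm}$ from \Proposition{ciprop}, one computes $\sum_{a=0}^r\psi_a = -\sum_{i=L}^{|\mm|}(|\mm|-i)c_i = -\tfrac12(|\mm|-L)\,\mm^{\mm}$, hence $\sum_{a=0}^r\big(\mm^{\mm}+\psi_a+\psi_{r-a}\big) = (r+L+1-|\mm|)\,\mm^{\mm}$. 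Therefore
\[
  \mH\star\mE \equiv \mm^{-1}(r+L+1-|\mm|)\,\mm^{\mm}\,q\,\mH^{\star(|\mm|-L)} \equiv (r+L+1-|\mm|)\,\mm^{\mm-1}\,q\,\mH^{|\mm|-L} \mod q^2,
\]
using $q\,\mH^{\star(|\mm|-L)}\equiv q\,\mH^{|\mm|-L}\bmod q^2$, which is the assertion.

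The step I expect to be most delicate is the modular comparison $\mH^k\equiv\mH^{\star k}+\psi_k q\,\mH^{\star(k-d)}$: one must verify by a degree count that iterating the Pieri rule introduces no further $q$-term and then track the coefficient $\psi_k$ through the resulting recursion, after which the identity $\sum_a\psi_a = -\tfrac12(|\mm|-L)\mm^{\mm}$ is a brief manipulation with the $c_i$. Everything else---the adapted basis, the use of \Proposition{midmid} and \Proposition{graber} to place $\mE$ inside $\QH(X)^\res$, and the cancellation of the primitive contribution via \Corollary{Hmiddle} and \eqn{magic}---is formal.
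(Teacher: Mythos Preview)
Your proof is correct and follows essentially the same route as the paper: reduce $\mE$ to the restricted part via \Proposition{midmid} and \Corollary{Hmiddle}, then compute $\mH\star\mH^a\star\mH^{r-a}$ modulo $q^2$ by comparing classical and quantum powers of $\mH$ through \Corollary{pieriq2} and \eqn{magic}, and finally sum using \Proposition{ciprop}. The only difference is cosmetic: you convert both $\mH^a$ and $\mH^{r-a}$ to quantum powers and evaluate the sum via the closed identity $\sum_a\psi_a=-\tfrac12(|\mm|-L)\mm^{\mm}$, whereas the paper converts one factor at a time and counts how often each $c_j$ appears in a sliding window; both arrive at $(r+L+1-|\mm|)\,\mm^{\mm}$.
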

\begin{proof}
  \Proposition{midmid} implies $\mE \in \QH(X)^\res$. Moreover, by
  \Corollary{Hmiddle}, we see
  \[
    \mH \star \mE =
    \mm^{-1} \sum_{i=0}^r \mH \star \mH^i \star \mH^{r-i} \,.
  \]
  By repeated application of \Corollary{pieriq2}, we have
  \[
    \mH^{\star i} \equiv \mH^i +
    \left( \sum_{j=L}^{i-1-r+|\mm|} c_j \right)
    q \mH^{i-r-L-1+|\mm|} \mod q^2\,.
  \]
  Expanding modulo $q^2$ yields:
  \[
    \begin{split}
      \mH \star \mH^i \star \mH^{r-i}
      &\equiv \mH^{\star (i+1)} \star \mH^{r-i} -
      \left( \sum_{j=L}^{i-1-r+|\mm|} c_j \right)
      q \mH^{|\mm|-L} \\
      &\equiv \left( \sum_{j=|\mm|-i}^{|\mm|} c_j
      - \sum_{j=L}^{i-1-r +|\mm|} c_j
      \right) q \mH^{|\mm|-L} \,.
    \end{split}
  \]
  Using \Proposition{ciprop}(b), we can rewrite the last result as
  \begin{eqnarray*}
    \left( \sum_{j=|\mm|-i}^{|\mm|} c_j - \sum_{j=L}^{i-1-r
      +|\mm|} c_j
    \right) q \mH^{|\mm|-L} &=&
    \left( \sum_{j=L}^{L+i} c_j - \sum_{j=L}^{i-1-r+|\mm|} c_j \right)
      q \mH^{|\mm|-L} \\
      & =& \left( \sum_{j=i-r+|\mm|}^{L+i} c_j
    \right) q \mH^{|\mm|-L} \,.
 \end{eqnarray*}
 Finally, we obtain by analyzing the summation:
  \[
    \begin{split}
      \mm^{-1}\sum_{i=0}^r \mH \star \mH^i \star \mH^{r-i}
      &\equiv \mm^{-1} (r+L+1-|\mm|)
      \left( \sum_{j=L}^{|\mm|} c_j \right)
      q \mH^{|\mm|-L} \mod q^2\\
      &\equiv (r+L+1-|\mm|)\, \mm^{\mm-1}\,
      q \mH^{|\mm|-L} \mod q^2\,,
    \end{split}
  \]
  where the last equality uses \Proposition{ciprop}(d).
\end{proof}

\begin{cor}
  \label{cor:Deltaq2}
  We have
  \[
    \mE \equiv \chi(X) \mm^{-1} \mH^{\star r} +
    (r+L+1-|\mm| - \chi(X))\, \mm^{\mm-1}\,
    q \mH^{\star (|\mm|-L-1)} \mod q^2 \,.
  \]
\end{cor}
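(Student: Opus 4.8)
The plan is to extract the $q^1$-coefficient of $\mE$ from the information already in hand: that $\mE \in \QH(X)^\res$ and $\mH \star \mE \equiv (r+L+1-|\mm|)\,\mm^{\mm-1}\,q\,\mH^{|\mm|-L} \bmod q^2$ by \Theorem{HDeltaq2}, together with the classical identity $\mE \equiv \chi(X)\,\mP \bmod q$ noted in the introduction, the subring structure of \Proposition{graber}, and the relation \eqn{magic}. This is, in effect, the argument of \Lemma{deg2Nclass} run modulo $q^2$, which is all that \Theorem{HDeltaq2} supplies.

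First I would fix the shape of $\mE$ modulo $q^2$. Each summand $\gamma_j^\vee \star \gamma_j$ is homogeneous of degree $2r$, so $\mE$ is homogeneous of degree $2r$ in the graded ring $\QH(X)$, where $\deg q = 2(r+L+1-|\mm|) \geq 4$ by \eqn{findex}. Since $\mE \in \QH(X)^\res = \mR \otimes_\Q \Q[q]$ with $\mR$ spanned by the monomials $1, \mH, \dots, \mH^r$, homogeneity forces the $q^0$-part of $\mE$ to be a multiple of $\mH^r$ and its $q^1$-part to lie in $\mR \cap H^{2(|\mm|-L-1)}(X) = \Q\,\mH^{|\mm|-L-1}$ (and to vanish when $|\mm|=L$, i.e.\ when $X = \bP^{r}$ is linear, in which case the asserted coefficient also vanishes and there is nothing more to prove). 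The $q^0$-part equals $\chi(X)\,\mP = \chi(X)\,\mm^{-1}\mH^r$. Using $\mH^i \equiv \mH^{\star i} \bmod q$ for $0 \le i \le r$, we obtain
\[
  \mE \equiv \chi(X)\,\mm^{-1}\mH^{\star r} + \mu\, q\, \mH^{\star(|\mm|-L-1)} \bmod q^2
\]
for a single unknown constant $\mu \in \Q$.

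Next I would solve for $\mu$ by multiplying through by $\mH$. The relation \eqn{magic} gives $\mH \star \mH^{\star r} = \mH^{\star(r+1)} = \mm^{\mm}\,q\,\mH^{\star(|\mm|-L)}$, and $\mH \star \mH^{\star(|\mm|-L-1)} = \mH^{\star(|\mm|-L)}$ since $|\mm|-L \le r-1$ (so no instance of \eqn{magic} is triggered). Hence, modulo $q^2$,
\[
  \mH \star \mE \equiv \bigl(\chi(X)\,\mm^{\mm-1} + \mu\bigr)\, q\, \mH^{|\mm|-L} \,.
\]
Comparing this with \Theorem{HDeltaq2} and equating the coefficients of $q\,\mH^{|\mm|-L}$ (legitimate since $\mH^{|\mm|-L} \ne 0$) gives $\chi(X)\,\mm^{\mm-1} + \mu = (r+L+1-|\mm|)\,\mm^{\mm-1}$, so $\mu = \bigl(r+L+1-|\mm|-\chi(X)\bigr)\,\mm^{\mm-1}$. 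Substituting back into the displayed congruence for $\mE$ yields precisely the claimed formula.

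The only genuinely delicate point is the degree bookkeeping in the step that fixes the shape of $\mE$: one must use the membership $\mE \in \QH(X)^\res$ from \Theorem{HDeltaq2}, together with homogeneity and the bound \eqn{findex}, to be sure the $q^1$-part of $\mE$ is a scalar multiple of the single monomial $\mH^{|\mm|-L-1}$ and never a primitive class, and to treat correctly the boundary case $|\mm| = L$. Once this shape is secured, the remainder is linear algebra modulo $q^2$ using only \Theorem{HDeltaq2} and \eqn{magic}.
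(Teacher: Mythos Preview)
Your proposal is correct and is exactly the argument the paper has in mind: it is \Lemma{deg2Nclass} applied with $a = \chi(X)\,\mm^{-1}$ and $b = (r+L+1-|\mm|)\,\mm^{\mm-1}$, run modulo $q^2$ since \Theorem{HDeltaq2} supplies $\mH \star \mE$ only to that accuracy. The shape-fixing step and the treatment of the boundary case $|\mm|=L$ are handled correctly.
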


In fact, the quantum Euler class is likely even better behaved. We
conjecture the following stronger result.

\begin{conj}\label{conj:delta}
  Let $X\subset \bP^{r+L}$ be a nonsingular complete intersection of dimension
  $r\geq 3$ and degrees $\mm=(m_1,\ldots, m_L)$ satisfying
  \[
    |\mm| \leq r + L-1 \,.
  \]
  Then, we have
  \[
    \mH \star \mE = (r+L+1-|\mm|)\, \mm^{\mm-1}\,
    q \mH^{\star (|\mm|-L)} \,,
  \]
  or equivalently,
  \[
    \mE = \chi(X)\, \mm^{-1}\, \mH^{\star r} +
    (r+L+1-|\mm| - \chi(X))\, \mm^{\mm-1}\,
    q \mH^{\star (|\mm|-L-1)} \,.
  \]
\end{conj}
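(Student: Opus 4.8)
The plan is to show that, once $\QH(X)$ is viewed as a Frobenius $\Q[q]$-algebra for the \emph{classical} Poincar\'e pairing, the statement reduces to the classical Bezoutian formula for the Euler class of a hypersurface Frobenius algebra, applied to Givental's relation \eqn{magic}. First I would record that $(\QH(X),\star)$ together with the $\Q[q]$-linear extension of the classical pairing $\langle\al,\be\rangle=\int_X\al\cup\be$ (no $q$) is a Frobenius $\Q[q]$-algebra with trace $\al\mapsto\int_X\al$: the associativity axiom gives $\langle\al\star\be,\ga\rangle=\sum_d q^d\langle\al,\be,\ga\rangle^X_{0,d}$, which is symmetric in $\al,\be,\ga$, and the fundamental class axiom gives $\int_X(\al\star\be)=\sum_d q^d\langle\al,\be,1\rangle^X_{0,d}=\langle\al,\be\rangle$. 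By \Definition{defE}, $\mE$ is the Euler class of this Frobenius algebra. Since $\mH\cdot\ga=0$ for primitive $\ga\in H^r(X)$, the subspace $\mR=\Span\{1,\mH,\dots,\mH^r\}$ and $H^r(X)_{\mathrm{prim}}$ are Poincar\'e-orthogonal, so the dual basis decomposes accordingly and $\mE=\mE^{\res}+\mE^{\mathrm{prim}}$, where $\mE^{\res}=\sum_{i=0}^r(\mm^{-1}\mH^{r-i})\star\mH^i\in\QH(X)^\res$ (a subring, by \Proposition{graber}) is the Euler class of the Frobenius subalgebra $\QH(X)^\res$, and $\mE^{\mathrm{prim}}=\sum_k p_k^\vee\star p_k$ runs over a Poincar\'e-dual pair of bases of $H^r(X)_{\mathrm{prim}}$.

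For the primitive part, each $p_k$ and its Poincar\'e-dual $p_k^\vee$ are primitive with $p_k\cdot p_k^\vee=\mP$, so \Proposition{midmid} gives $p_k\star p_k^\vee=\mm^{-1}\mH^{\star r}-\mm^{\mm-1}q\,\mH^{\star(|\mm|-L-1)}$; super-commutativity of $\star$ then yields $p_k^\vee\star p_k=(-1)^r\big(\mm^{-1}\mH^{\star r}-\mm^{\mm-1}q\,\mH^{\star(|\mm|-L-1)}\big)$ in all cases. Summing over $k$ and writing $b=\dim_\Q H^r(X)_{\mathrm{prim}}$, so that $(-1)^r b=\chi(X)-(r+1)$, we get
\[
  \mE^{\mathrm{prim}} = (-1)^r b\,\big(\mm^{-1}\mH^{\star r}-\mm^{\mm-1}q\,\mH^{\star(|\mm|-L-1)}\big)\,.
\]

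The crux is the restricted part. By \Proposition{graber}, $\QH(X)^\res$ is a free $\Q[q]$-module of rank $r+1$ on which \eqn{magic} holds; comparing ranks, the surjection $\Q[q][t]/(t^{r+1}-\mm^{\mm}q\,t^{|\mm|-L})\to\QH(X)^\res$, $t\mapsto\mH$, is a ring isomorphism carrying $t^i$ to the quantum power $\mH^{\star i}$. Under this presentation the Frobenius trace is $\mm^1$ times the socle functional $\sigma$ (the coefficient of $t^r$ in the representative of $t$-degree $\le r$), because $\mH^{\star i}-\mH^i$ lies in strictly lower cohomological degree for $0\le i\le r$, so $\int_X\mH^{\star i}=\mm^1\,\delta_{i,r}=\mm^1\,\sigma(t^i)$. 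The Euler class of a hypersurface Frobenius algebra $R[t]/(f)$ with the socle trace is, by the classical Bezoutian identity, the element $f'(t)$ (this is the computation behind \cite{abrams:quantum}); rescaling the trace by $\mm^1$ rescales the Euler class by $(\mm^1)^{-1}=\mm^{-1}$, so with $f=t^{r+1}-\mm^{\mm}q\,t^{|\mm|-L}$,
\[
  \mE^{\res} = \mm^{-1}f'(\mH^\star) = (r+1)\,\mm^{-1}\mH^{\star r} - (|\mm|-L)\,\mm^{\mm-1}q\,\mH^{\star(|\mm|-L-1)}\,,
\]
where I used $\mm^{-1}\mm^{\mm}=\mm^{\mm-1}$.

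Adding $\mE^{\res}+\mE^{\mathrm{prim}}$ and substituting $(-1)^r b=\chi(X)-(r+1)$ gives exactly $\mE=\chi(X)\,\mm^{-1}\mH^{\star r}+(r+L+1-|\mm|-\chi(X))\,\mm^{\mm-1}q\,\mH^{\star(|\mm|-L-1)}$, which is the second displayed identity of the Conjecture; the first follows upon multiplying by $\mH$ and applying \eqn{magic}, or from \Lemma{deg2Nclass} together with the equivalence of the two forms given $\mE\in\QH(X)^\res$ and $\mE\equiv\chi(X)\mP\bmod q$. The steps needing the most care are the Koszul-sign bookkeeping in the primitive sum when $r$ is odd and a precise statement, over the base ring $\Q[q]$, of the Bezoutian/socle formula for hypersurface Frobenius algebras; everything else is assembly of \Proposition{graber}, \Proposition{midmid} and \eqn{magic}, and as a by-product this reproves \Corollary{Deltaq2} without the mod-$q^2$ Pieri estimates.
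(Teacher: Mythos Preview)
Your argument appears to be correct and, notably, it \emph{proves} the statement in full generality, whereas the paper leaves it as a conjecture and only establishes the cases $\deg(q^2)>\deg(\mP)$ (via \Theorem{HDeltaq2}) and $\deg(q^2)=\deg(\mP)$ (via a separate border computation). The paper's method computes $\mH\star\mE$ modulo $q^2$ by brute force, expanding each $\mH\star\mH^i\star\mH^{r-i}$ with the Pieri estimates of \Corollary{pieriq2} and summing; the higher $q$-corrections are then out of reach because the Pieri rule is only known modulo $q^2$.

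Your approach sidesteps this entirely. Once you observe that $\QH(X)^\res\cong\Q[q][t]/(f)$ with $f=t^{r+1}-\mm^{\mm}q\,t^{|\mm|-L}$ (an isomorphism of free $\Q[q]$-modules of the same rank, hence of rings), and that the Poincar\'e trace transports to $\mm^1$ times the socle functional, the Bezoutian identity $\sum_i t^i\otimes(t^i)^\vee=(f(s)-f(t))/(s-t)$ in $A\otimes_R A$ gives $\mE^\res=\mm^{-1}f'(\mH^\star)$ \emph{exactly}. The Bezoutian identity holds over any commutative base ring for monic $f$: one checks $(\sigma_s\otimes\id)(s^p B(s,t))=t^p$ for $0\le p\le r$ via the residue interpretation of $\sigma$, so no semisimplicity or field hypothesis is needed. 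Combined with \Proposition{midmid}, which the paper already proves exactly (not modulo $q^2$), and the Euler-characteristic bookkeeping $(-1)^r\dim H^r(X)_{\mathrm{prim}}=\chi(X)-(r+1)$, the conjectured formula drops out.

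The two points you flag as needing care are real but routine: the Koszul signs in the odd-$r$ primitive sum work out exactly as you indicate (using a symplectic basis one gets $-2\Xi$ per pair, hence $-b\Xi=(-1)^rb\Xi$), and the hypersurface Frobenius/Bezoutian statement over $\Q[q]$ is the standard residue-pairing computation. What your route buys is the full conjecture; what the paper's explicit Pieri approach buys is the concrete two-point line invariants of \Proposition{gw2pt}, which are of independent interest but not needed here.
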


The equivalence of the two claims in \Conjecture{delta} follows from
 \Lemma{deg2Nclass} and \eqn{magic}. \Conjecture{delta} is a consequence of
 \Theorem{HDeltaq2} when $\deg(q^2) > \deg(\mP)$, or equivalently, when
\[
  r > 2|\mm|-2L-2 \,.
\]
We have verified \Conjecture{delta} by computer in all cases where $X$ is a
complete intersection of dimension at most 30 or a hypersurface of dimension at
most 135.

\subsection{Tevelev degrees}

Since $\{ 1, \mH, \mH^{\star 2}, \mH^{\star 3}, \ldots, \mH^{\star r}\}$ is a
$\Q[q]$-module basis of $\QH(X)^\res$, we can uniquely express the point class
$\mP$ in $\QH(X)$ as
\begin{equation}
  \label{eqn:Pi}
  \mP = \sum_{i=0}^{i_0} P_i\, q^i \mH^{\star (r-i(r+L+1-|\mm|))}
\end{equation}
where $P_0, \dots, P_{i_0} \in \Q$ and
\[
  i_0=\left \lfloor\frac{r}{r+L+1-|\mm|}\right \rfloor \,.
\]
The first two coefficients
\[
  P_0 = \mm^{-1} \,, \ \ \
  P_1 = (\mm-1)! - \mm^{\mm-1}
\]
are determined by \Corollary{HN}.

\begin{defn}\label{defn:disc}
  Let $g,n \geq 0$ be non-negative integers such that
  \[
    d = \frac{r(n + g - 1)}{r + L + 1 - |\mm|}
  \]
  is a non-negative integer. There are unique rational numbers $b_i \in \Q$ such
  that
  \[
    \mP^{\star n} \star \mE^{\star g} \, = \,
    \sum_{i=0}^{i_0} b_i\, q^{d+i}\, \mH^{\star(r-i(r+L+1-|\mm|))} \,.
  \]
  Define the {\bf non-contributing part} of $\mP^{\star n} \star \mE^{\star g}$
  to be the sum
  \[
    [\mP^{\star n} \star \mE^{\star g}]^+ \, = \,
    \mP^{\star n} \star \mE^{\star g} - b_0\, q^d H^{\star r} \, = \,
    \sum_{i=1}^{i_0} b_i\, q^{d+i}\, \mH^{\star(r-i(r+L+1-|\mm|))} \,,
  \]
  and define the {\bf discrepancy} of $\mP^{\star n} \star \mE^{\star g}$ to be
  the rational number
  \[
    \Disc(\mP^{\star n} \star \mE^{\star g}) \, = \,
    \sum_{i=1}^{i_0} b_i \, \mm^{-i\mm + 1} \,.
  \]
  The product $\mP^{\star n} \star \mE^{\star g}$ is {\bf discrepancy-free} if
  $\Disc(\mP^{\star n} \star \mE^{\star g}) = 0$.
\end{defn}

In the main cases that we will consider, the non-contributing part $[\mP^{\star
n} \star \mE^{\star g}]^+$ is easy to compute and has few non-zero terms. A
formula for the virtual Tevelev degree,
\[
  \vTev^X_{g,d,n} = \coeff(\mP^{\star n} \star \mE^{\star g}, q^d \mP) \,,
\]
is implied by \Conjecture{delta}.

\begin{prop}\label{prop:tevdeg2}
  Suppose \Conjecture{delta} holds for the complete intersection
  \[
    X = V(f_1,\dots,f_L) \subset \bP^{r+L} \,,
  \]
  and let $g, d, n \geq 0$ satisfy the dimension constraint
  \begin{equation}\label{eqn:ci-constraint}
    (r+L+1-|\mm|)d = r(n+g-1) \,.
  \end{equation}
  Then, we have
  \[
    \vTev^X_{g,d,n} \ = \
    \left( \sum_{i=0}^{i_0} P_i\, \mm^{-i \mm} \right)^n
    (r+L+1-|\mm|)^g\, \mm^{d \mm - g + 1}
    \, - \, \Disc(\mP^{\star n} \star \mE^{\star g}) \,.
  \]
\end{prop}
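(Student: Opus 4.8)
The plan is to combine \Theorem{qhred} with a single ring-homomorphism evaluation. Write $k := r+L+1-|\mm|$, so $\deg(q)=2k$, the magic relation \eqn{magic} reads $\mH^{\star(r+1)}=\mm^{\mm}\,q\,\mH^{\star(r+1-k)}$, and \eqn{ci-constraint} reads $kd=r(n+g-1)$. By \Theorem{qhred}, $\vTev^X_{g,d,n}=\coeff(\mP^{\star n}\star\mE^{\star g},\,q^d\mP)$, and by \Proposition{graber}, \Proposition{midmid}, and the degree count recorded in \Definition{disc}, this product lies in $\QH(X)^\res$ and equals $\sum_{i=0}^{i_0} b_i\,q^{d+i}\,\mH^{\star(r-ik)}$.

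First I would reduce $\vTev^X_{g,d,n}$ to the single scalar $b_0$. Since $\mH^{\star j}\equiv\mH^{j}\bmod q$ for $0\le j\le r$, expanding each $q^{d+i}\mH^{\star(r-ik)}$ in the monomial basis $\{q^a\mH^b:0\le b\le r\}$ of $\QH(X)^\res$ produces, for $i\ge 1$, only terms with $q$-exponent $\ge d+i>d$; hence only the $i=0$ summand contributes to $\coeff(-,q^d\mH^r)$, and its contribution is $b_0$. As $q^d\mP=\mm^{-1}q^d\mH^r$, this gives $\vTev^X_{g,d,n}=\mm\,b_0$. Since $\Disc(\mP^{\star n}\star\mE^{\star g})=\sum_{i=1}^{i_0}b_i\,\mm^{-i\mm+1}$, we have $\mm\,b_0=\mm\sum_{i=0}^{i_0}b_i\,\mm^{-i\mm}-\Disc(\mP^{\star n}\star\mE^{\star g})$, so the Proposition reduces to the identity $\sum_{i=0}^{i_0}b_i\,\mm^{-i\mm}=(\sum_{j=0}^{i_0}P_j\,\mm^{-j\mm})^n\,k^g\,\mm^{d\mm-g}$.

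The key step is to prove this identity by evaluating both sides under the $\Q$-algebra homomorphism $\phi\colon\QH(X)^\res\to\Q$ with $\phi(\mH)=1$ and $\phi(q)=\mm^{-\mm}$. This $\phi$ is well defined: by \Proposition{graber}, the fact that $\{1,\mH^{\star1},\dots,\mH^{\star r}\}$ is a $\Q[q]$-module basis of $\QH(X)^\res$, and \eqn{magic}, there is a ring isomorphism $\QH(X)^\res\cong\Q[t,q]/(t^{r+1}-\mm^{\mm}q\,t^{\,r+1-k})$ sending $t^j\mapsto\mH^{\star j}$, and the assignment $t\mapsto1$, $q\mapsto\mm^{-\mm}$ kills this relation. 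Applying $\phi$ to $\mP^{\star n}\star\mE^{\star g}=\sum_i b_i\,q^{d+i}\mH^{\star(r-ik)}$ gives $\mm^{-d\mm}\sum_i b_i\,\mm^{-i\mm}$; on the other side, $\phi(\mP)=\sum_j P_j\,\mm^{-j\mm}$ directly from \eqn{Pi}, and here is where the hypothesis \Conjecture{delta} enters: it gives $\mE=\chi(X)\,\mm^{-1}\mH^{\star r}+(k-\chi(X))\,\mm^{\mm-1}q\,\mH^{\star(r-k)}$, so $\phi(\mE)=\chi(X)\,\mm^{-1}+(k-\chi(X))\,\mm^{\mm-1}\mm^{-\mm}=k\,\mm^{-1}$, with the Euler characteristic cancelling. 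Multiplicativity of $\phi$ now yields $\mm^{-d\mm}\sum_i b_i\,\mm^{-i\mm}=\phi(\mP)^n\phi(\mE)^g=(\sum_j P_j\,\mm^{-j\mm})^n k^g\mm^{-g}$, which is exactly the required identity. The constraint $kd=r(n+g-1)$ is used only to make $d$ a well-defined non-negative integer and to pin down the homogeneous shape of $\mP^{\star n}\star\mE^{\star g}$ exploited in the reduction.

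I expect the main obstacle to be the well-definedness of $\phi$, i.e.\ verifying the presentation $\QH(X)^\res\cong\Q[t,q]/(t^{r+1}-\mm^{\mm}q\,t^{\,r+1-k})$ — which amounts to checking that the evident surjection from this quotient onto $\QH(X)^\res$ carries the standard $\Q[q]$-basis $\{1,t,\dots,t^r\}$ to the basis $\{1,\mH^{\star1},\dots,\mH^{\star r}\}$, hence is an isomorphism of $\Q[q]$-modules. Once this is in place, together with the exact shape of $\mE$ supplied by \Conjecture{delta}, the evaluations $\phi(\mP)=\sum_j P_j\,\mm^{-j\mm}$ and $\phi(\mE)=k\,\mm^{-1}$ are immediate, and the cancellation of $\chi(X)$ in $\phi(\mE)$ is precisely what makes the final formula independent of the Euler characteristic.
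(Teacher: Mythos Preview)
Your argument is correct and is essentially the paper's proof, repackaged. The paper multiplies $\mP^{\star n}\star\mE^{\star g}$ by $\mH^{\star(|\mm|-L)}$ and then uses the relation \eqn{magic} to collapse every term to a scalar multiple of a single power $\mH^{\star(rn+rg+|\mm|-L)}$; comparing the two expressions for this product yields the same scalar identity $\sum_i b_i\,\mm^{-i\mm}=(\sum_j P_j\,\mm^{-j\mm})^n k^g\,\mm^{d\mm-g}$ that you obtain. Your evaluation homomorphism $\phi$ with $\phi(\mH)=1$, $\phi(q)=\mm^{-\mm}$ encodes exactly this collapsing step: once one knows $\QH(X)^\res\cong\Q[t,q]/(t^{r+1}-\mm^{\mm}q\,t^{r+1-k})$, applying $\phi$ is the same as multiplying into the range where \eqn{magic} applies and reading off the coefficient. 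The homomorphism formulation is slightly slicker (the cancellation of $\chi(X)$ in $\phi(\mE)=k\,\mm^{-1}$ is immediate), while the paper's version avoids having to verify the full presentation of $\QH(X)^\res$. One notational point: in several places you write $\mm$ where the paper's convention requires $\mm^{1}=\prod_i m_i$ (e.g.\ $\vTev^X_{g,d,n}=\mm^{1}\,b_0$ and $\Disc=\sum_{i\ge1}b_i\,\mm^{-i\mm+1}$); the mathematics is unaffected.
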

\begin{proof}
  Using the relation \eqn{magic} and \Conjecture{delta}, we obtain
  \[
    \mP^{\star n} \star \mE^{\star g} \star \mH^{\star (|\mm|-L)}
    = \left( \sum_{i=0}^{i_0} P_i\, \mm^{-i \mm} \right)^n
    (r + L + 1 - |\mm|)^g\, \mm^{-g}\, \mH^{\star (r n + r g + |\mm| - L)} \,,
  \]
  and by \eqn{magic} and \Definition{disc} we have
  \[
    \mP^{\star n} \star \mE^{\star g} \star \mH^{\star (|\mm|-L)}
    = \sum_{i=0}^{i_0} b_i\, \mm^{-(d+i)\mm}\,
    \mH^{\star(r n + r g + |\mm| - L)} \,.
  \]
  By comparing these identities, we obtain
  \[
    \sum_{i=0}^{i_0} b_i\, \mm^{-i \mm}
    = \left( \sum_{i=0}^{i_0} P_i \mm^{-i \mm} \right)^n
    (r + L + 1 - |\mm|)^g\, \mm^{d\mm - g} \,.
  \]
  By observing that
  \[
    \vTev^X_{g,d,n} =
    \coeff(\mP^{\star n} \star \mE^{\star g}, q^d \mP) = \mm^1\, b_0 \,,
  \]
  the Proposition follows from this identity.
\end{proof}

\begin{example}
  Let $Q^r \subset \bP^{r+1}$ be a quadric of dimension $r$ and let $g,d,n \geq
  0$ satisfy $d = n+g-1$. By \Corollary{HN} and \Corollary{Deltaq2} we have
  \[
    \mP = \frac{1}{2}\, \mH^{\star r} - q
    \ \ \ \ \text{and} \ \ \ \
    \mE = \frac{r + \delta}{2}\, \mH^{\star r} - 2 \delta\, q \,.
  \]
  From this we obtain
  \[
    \begin{split}
      & \sum_{i=0}^{i_0} P_i\, \mm^{-i \mm} = \frac{1}{4} \,,
      \ \ \ \
      [\mP^{\star n} \star \mE^{\star g}]^+ = (-q)^{n+g} (2\delta)^g \,, \\
      & \text{and} \ \ \ \ \Disc(\mP^{\star n} \star \mE^{\star g}) =
      \frac{(-1)^{n+g} (2 \delta)^g}{2} \,.
    \end{split}
  \]
  \Proposition{tevdeg2} therefore gives
  \[
    \vTev^{Q^r}_{g,d,n} \, = \,
    4^{-n}\, r^g\, 2^{2d-g+1} - \Disc(\mP^{\star n} \star \mE^{\star g})
    \, = \, \frac{(2r)^g + (-1)^d\,(2\delta)^g}{2} \,.
  \]
\end{example}

\begin{remark}
  Let $g,n \geq 0$ satisfy the condition of \Definition{disc}. The product
  $\mP^{\star n} \star \mE^{\star g}$ is always discrepancy-free when $X =
  \bP^r$ is projective space, but never discrepancy-free when $X$ is a quadric.

  If \Conjecture{delta} holds for $X$, then $\mP^{\star n} \star \mE^{\star g}$
  is also discrepancy-free whenever the following inequality is satisfied:
  \begin{equation}
    \label{eqn:simple}
    n\, (r - i_0(r+L+1-|\mm|)) +
    g\, (|\mm|-L-1) \geq |\mm|-L \,.
  \end{equation}
  In fact, this inequality implies that the expansion of $\mP^{\star n} \star
  \mE^{\star g}$, using the formulas of \Conjecture{delta} and equation
  \eqn{Pi}, is a linear combination of terms $q^e \mH^{\star p}$ with $p \geq
  |\mm|-L$. Using \eqn{magic}, this implies that $b_i = 0$ for $i > 0$ in
  \Definition{disc}.

  If $X$ is not a quadric or $\bP^r$, so that $|\mm| > L+1$, then inequality
  \eqn{simple} is satisfied if either $g \geq 2$, or if $g \geq 1$, $n \geq 1$,
  and $r$ is not divisible by $r+L+1-|\mm|$.

  Computer evidence suggests that $\mP^{\star n} \star \mE^{\star g}$ is
  discrepancy-free if and only if $X \cong \bP^r$ or inequality \eqn{simple}
  holds.
\end{remark}

\subsection{Complete intersections of low degree}\label{sec:secci}

When the dimension of
\[
  X = V(f_1,\dots,f_L) \subset \bP^{r+L}
\]
is large compared to the degrees $\mm$ of the defining equations,
\[
  \deg(q^2) > \deg(\mP) \,.
\]
More precisely, the above condition holds if and only if
\begin{equation}\label{eqn:q2q2}
  r > 2|\mm|-2L-2 \,.
\end{equation}
If $X$ is not projective space, then two consequences of \eqn{q2q2} are:
\begin{enumerate}
  \item [(i)] $i_0 = 1$,
  \item [(ii)] \Conjecture{delta} holds for $X$.
\end{enumerate}
If we further assume that $X$ is not a quadric, then \eqn{simple} holds if and
only if $g+n \geq 2$.
\begin{enumerate}
    \item [(iii)] $g+n \geq 2$ implies $\Disc(\mP^{\star n} \star \mE^{\star g})
    = 0$.
\end{enumerate}
By putting our results together, we obtain the following formula for the virtual
Tevelev degrees of $X$.

\begin{thm}\label{thm:ccii}
  Let $|\mm| > L+1$ and $r > 2|\mm|-2L-2$. If $g,d,n \geq 0$ satisfy
  \[
    (r+L+1-|\mm|) d = r(n+g-1)
  \]
  and $g+n \geq 2$, then
  \[
    \vTev^X_{g,d,n} = ((\mm-1)!)^n\, (r+L+1-|\mm|)^g\,
    \mm^{(d-n)\mm-g+1} \,.
  \]
\end{thm}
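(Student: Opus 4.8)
The plan is to obtain the formula as a direct corollary of \Proposition{tevdeg2}, after recording that the two hypotheses $|\mm|>L+1$ and $r>2|\mm|-2L-2$ put us in the special situation described by points (i)--(iii) above. (These hypotheses also imply the standing assumptions of this section, namely $r\ge 3$ and $|\mm|\le r+L-1$, so that \eqn{magic} and the setup of \Conjecture{delta} are available.) First I would verify (i): the hypothesis $r>2|\mm|-2L-2$ is exactly $\deg(q^2)>\deg(\mP)$, i.e.\ $r<2(r+L+1-|\mm|)$, while $|\mm|>L+1$ gives $r+L+1-|\mm|\le r$; together these force $i_0=\lfloor r/(r+L+1-|\mm|)\rfloor=1$. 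For (ii), the same inequality $r>2|\mm|-2L-2$ is precisely the range in which \Conjecture{delta} follows from \Theorem{HDeltaq2}, as noted after the statement of \Conjecture{delta}. For (iii), since $|\mm|>L+1$ the variety $X$ is neither $\bP^r$ nor a quadric, and specializing \eqn{simple} to $i_0=1$ turns it into $(g+n)(|\mm|-L-1)\ge|\mm|-L$; as $|\mm|-L-1\ge1$, this holds if and only if $g+n\ge2$, so by the remark following \Proposition{tevdeg2} the discrepancy $\Disc(\mP^{\star n}\star\mE^{\star g})$ vanishes.

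With (i)--(iii) in hand, \Proposition{tevdeg2} applies and yields
\[
  \vTev^X_{g,d,n} = \Big(\sum_{i=0}^{i_0}P_i\,\mm^{-i\mm}\Big)^{n}(r+L+1-|\mm|)^g\,\mm^{d\mm-g+1}-\Disc(\mP^{\star n}\star\mE^{\star g}),
\]
where $i_0=1$ and the discrepancy is $0$. It then remains only to evaluate $P_0+P_1\mm^{-\mm}$. Substituting the known values $P_0=\mm^{-1}$ and $P_1=(\mm-1)!-\mm^{\mm-1}$ (both read off from \Corollary{HN}, as recorded after \eqn{Pi}) and using the identity $\mm^{\mm-1}\mm^{-\mm}=\mm^{-1}$, the two $\mm^{-1}$ terms cancel and $P_0+P_1\mm^{-\mm}=(\mm-1)!\,\mm^{-\mm}$. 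Raising this to the $n$th power and collecting powers of $\mm$ gives $\vTev^X_{g,d,n}=((\mm-1)!)^n\,(r+L+1-|\mm|)^g\,\mm^{(d-n)\mm-g+1}$, which is the assertion.

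I expect no serious obstacle at this stage: the substantive work is already done --- the mod-$q^2$ description of $\mE$ in \Theorem{HDeltaq2}, the control of the primitive cohomology via the Deligne monodromy argument in \Proposition{midmid}, and the reduction of $\vTev^X_{g,d,n}$ to coefficients of $\mP^{\star n}\star\mE^{\star g}$ in \Theorem{qhred}. The only point calling for a little care is the bookkeeping in (iii): confirming that \eqn{simple} really collapses to the clean condition $g+n\ge2$ once $i_0=1$, and that the two borderline cases where this equivalence would fail ($X\cong\bP^r$ and $X$ a quadric) are precisely those excluded by the hypothesis $|\mm|>L+1$.
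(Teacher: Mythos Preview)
Your proof is correct and follows the same route as the paper's own argument: invoke points (i)--(iii) established just before the statement to place yourself in the hypotheses of \Proposition{tevdeg2} with $i_0=1$ and zero discrepancy, then evaluate $P_0+P_1\mm^{-\mm}=(\mm-1)!\,\mm^{-\mm}$ from \Corollary{HN}. You have in fact supplied more detail than the paper does, including the explicit check that \eqn{simple} collapses to $(g+n)(|\mm|-L-1)\ge|\mm|-L$ when $i_0=1$ and hence to $g+n\ge2$.
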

\begin{proof}
  Since we have (ii) and (iii), \Proposition{tevdeg2} implies the result. We
  have $i_0 = 1$ by (i) and
  \[
    \mP = \mm^{-1}\, \mH^{\star r} + ((\mm-1)! - \mm^{\mm-1})\,
    q \mH^{\star (|\mm|-L-1)}
  \]
  by \Corollary{HN}.
\end{proof}

\begin{example}
  Let $X \subset \bP^{r+1}$ be a nonsingular cubic $r$-fold. The Tevelev degrees
  of $X$ are given by
  \begin{equation}\label{eqn:c43c}
    \vTev^X_{g,d,n} = 2^{n}\cdot (r-1)^g\cdot 3^{3d-3n-g+1}
  \end{equation}
  for $g,d,n \geq 0$ satisfying $(r-1)d = r(n+g-1)$ and $g+n \geq 2$.

  For cubic $r$-folds of dimension $r>4$, the virtual Tevelev degrees of $X$ of
  genus $g$ are enumerative for all sufficiently large $d$ by
  \cite[Thm.~11]{lian.pandharipande:enumerativity}. Formula \eqn{c43c} should
  therefore admit a derivation (for $d$ large) by classical projective geometry
  as developed in \cite{farkas.lian:linear} for the case $\bP^r$.
\end{example}

\subsection{The border case}

For further evidence, we show that \Conjecture{delta} is also true when $X =
V(f_1,\dots,f_L) \subset \bP^{r+L}$ satisfies the condition
\[
  \deg(q^2) = \deg(\mP)\,,
\]
or equivalently, when $r = 2|\mm| - 2L - 2$.

\begin{lemma}\label{lemma:bordergw}%
  Let $r = 2|\mm| - 2L - 2$. Then, we have
  \[
    \langle \mH^{r-1}, \mP\rangle^X_{0,2} =
    \frac{\mm!(\mm! + 2 c_{L+1})}{4} \,, \ \ \
    \mH^r = \mH^{\star r} + (\mm! - \mm^\mm)\,
    q \mH^{\star(|\mm|-L-1)} -
    \frac{(\mm!)^2}{2}\, q^2 \,.
  \]
\end{lemma}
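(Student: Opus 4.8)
The plan is to determine simultaneously the two unknowns $v := \langle \mH^{r-1},\mP\rangle^X_{0,2}$ and the $q^2$-coefficient $b$ of $\mH^r$ when $\mH^r$ is expanded in the $\Q[q]$-basis $\{1,\mH,\mH^{\star 2},\dots,\mH^{\star r}\}$ of $\QH(X)^\res$. A degree count in the graded ring $\QH(X)$ shows that $\mH^r = \mH^{\star r} + A_1\, q\mH^{\star(|\mm|-L-1)} + b\, q^2$ with $A_1 = \mm! - \mm^\mm$ already known from \Corollary{HN}, so the second displayed identity of the Lemma is exactly the claim $b = -(\mm!)^2/2$. Throughout write $N = |\mm|-L-1$, which equals $r/2$ in the border case (so $r$ is even and $N\ge 2$), and note $\deg(q)=\deg(\mH^r)$.

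First I would record, from \Corollary{pieriq2} together with the truncation forced by the grading, the exact identities $\mH^{\star j} = \mH^j$ for $0\le j\le N-1$, then $\mH^{\star N} = \mH^N + \mm!\, q$ (using $c_L=\mm!$), then $\mH^{\star(N+1)} = \mH^{N+1} + (\mm!+c_{L+1})\, q\mH$, and $\mH^{\star(r-1)} = \mH^{r-1} + (\mm^\mm - c_{L+1} - \mm!)\, q\mH^{N-1}$ (using \Proposition{ciprop}(b),(c),(d)); in each case there is no room for a $q^2$-term by degree. Next I would compute $\mH\star\mH^{r-1}$ coefficient by coefficient: one has $(\mH\star\mH^{r-1})_0^X = \mH^r$; the class $(\mH\star\mH^{r-1})_1^X$ has complex degree $N=r/2$, so a priori it could carry a primitive part, but \Proposition{graber} confines it to $\mR$, hence it equals $\theta\,\mH^N$, and pairing with $\mH^N$ and using the divisor axiom together with \Proposition{gw2pt} identifies $\theta = c_{L+1}$; the class $(\mH\star\mH^{r-1})_2^X$ has complex degree $0$, so is a constant, which pairing with $\mH^r$ and the divisor axiom identifies with $2v$; and $(\mH\star\mH^{r-1})_d^X = 0$ for $d\ge 3$ by degree. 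This gives $\mH\star\mH^{r-1} = \mH^r + c_{L+1}\, q\mH^N + 2v\, q^2$.

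For the first linear relation I would compute $\mH\star\mH^r$ two ways. The same coefficient analysis (using $\mH^{r+1}=0$, the divisor axiom, and \Proposition{gw2pt}) gives $\mH\star\mH^r = \mm!\, q\mH^{N+1} + 2v\, q^2\mH$, which after substituting $\mH^{N+1} = \mH^{\star(N+1)} - (\mm!+c_{L+1})q\mH$ becomes $\mm!\, q\mH^{\star(N+1)} + (2v - (\mm!)^2 - \mm! c_{L+1})\, q^2\mH$; on the other hand, multiplying $\mH^r = \mH^{\star r} + (\mm!-\mm^\mm)q\mH^{\star N} + b\, q^2$ by $\mH$ and applying \eqn{magic} in the form $\mH^{\star(r+1)} = \mm^\mm\, q\mH^{\star(N+1)}$ gives $\mm!\, q\mH^{\star(N+1)} + b\, q^2\mH$, so comparing the $q^2\mH$-coefficients yields $b = 2v - (\mm!)^2 - \mm! c_{L+1}$. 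For the second relation I would compute $\mH^N\star\mH^N = \mH^{\star 2N} = \mH^{\star r}$ two ways: using $\mH^N = \mH^{\star N} - \mm! q$ and re-expanding $\mH^{\star r}$ in the classical basis gives $\mH^N\star\mH^N = \mH^r + (\mm^\mm-3\mm!)q\mH^N + (\mm^\mm\mm! - 2(\mm!)^2 - b)\, q^2$; using instead $\mH^N = \mH\star\mH^{N-1} - \mm! q$, together with $\mH^{N-1}\star\mH^N = \mH^{\star(r-1)} - \mm! q\mH^{N-1}$ and the formula for $\mH\star\mH^{r-1}$ above, gives $\mH^N\star\mH^N = \mH^r + (\mm^\mm-3\mm!)q\mH^N + (2v + \mm^\mm\mm! - c_{L+1}\mm! - 2(\mm!)^2)\, q^2$. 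Comparing the $q^2$-coefficients (the agreement of the $q\mH^N$-coefficients being a consistency check) yields $b = \mm! c_{L+1} - 2v$. Solving the two relations simultaneously gives $v = \mm!(\mm!+2c_{L+1})/4$ and $b = -(\mm!)^2/2$, the two assertions of the Lemma.

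I expect the only genuine obstacle to be the middle-cohomology subtlety in the evaluation of $(\mH\star\mH^{r-1})_1^X$: because $N=r/2$, this Gromov-Witten class lies in $H^r(X)$, the one degree where primitive cohomology appears, and it is precisely \Proposition{graber} (equivalently \Corollary{Hmiddle}) that saves the computation by forcing the class into the restricted subring. Everything else is careful but routine bookkeeping with the $q$-adic truncations, the divisor axiom, \Proposition{gw2pt}, and the constants $c_i$ of \Proposition{ciprop}.
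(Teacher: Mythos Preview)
Your proof is correct and follows essentially the same approach as the paper: both arguments compute the low quantum powers $\mH^{\star j}$ via \Corollary{pieriq2}, introduce the unknown degree-$2$ invariant $v=\delta$ through the expansion of $\mH\star\mH^{r-1}$, and close the system using the relation \eqn{magic} at $\mH^{\star(r+1)}$. The paper organizes this as an inductive chain (computing $\mH^{\star r}$ in terms of $\delta$, then $\mH^{\star(r+1)}$, then applying \eqn{magic}), whereas you package the same information as two linear equations in $(v,b)$ --- your Relation~2 via $\mH^N\star\mH^N$ is equivalent to the paper's formula for $\mH^{\star r}$ since $\mH^N\star\mH^N = (\mH^{\star N}-\mm!\,q)^{\star 2}$ differs from $\mH^{\star r}$ by known terms, and your Relation~1 is the paper's application of \eqn{magic}. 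Your explicit invocation of \Proposition{graber} to rule out a primitive component in $(\mH\star\mH^{r-1})_1^X$ is a point the paper leaves implicit.
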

\begin{proof}
  Using \Corollary{pieriq2}, we compute
  \[
    \begin{split}
      \mH^{\star (|\mm|-L-2)}
      &= \mH^{|\mm|-L-2} \,, \\
      \mH^{\star (|\mm|-L-1)}
      &= \mH^{|\mm|-L-1} + c_L\, q \, \\
      \mH^{\star (|\mm|-L)}
      &= \mH^{|\mm|-L} + (c_L + c_{L+1})\, q \mH \,.
    \end{split}
  \]
  Set $\delta = \langle H^{r-1},\mP\rangle_{0,2}^X$. We continue to
  compute
  \begin{eqnarray*}
    \mH^{\star (r-1)} &=&
    \mH^{r-1} + (\mm^{\mm} - c_L - c_{L+1})\,
    q \mH^{\star (|\mm|-L-2)} \,, \\
    \mH^{\star r} &=&
    \mH^r + c_{L+1}\, q \mH^{|\mm|-L-1} + 2\delta\, q^2
    + (\mm^{\mm} - c_L - c_{L+1})\,
    q \mH^{\star (|\mm|-L-1)} \\
    &=& \mH^r + (\mm^{\mm} - c_L)\,
    q \mH^{\star (|\mm|-L-1)} + (2\delta - c_L c_{L+1})\, q^2 \,,\\
    \mH^{\star (r+1)} &=& c_L\, q \mH^{|\mm|-L} + 2\delta\, q^2 \mH
    + (\mm^{\mm} - c_L)\, q \mH^{\star (|\mm|-L)}
    + (2\delta - c_L c_{L+1})\, q^2 H \\
    & = & \mm^{\mm}\, q \mH^{\star (|\mm|-L)}
    + (4\delta - c_L^2 - 2 c_L c_{L+1})\, q^2 \mH \,.
  \end{eqnarray*}
  Relation \eqn{magic} therefore implies
  \[
    4\delta - c_L^2 - 2 c_L c_{L+1} = 0 \,,
  \]
  which proves the identities.
\end{proof}

\begin{thm}
  \Conjecture{delta} is true when $\deg(q^2) = \deg(\mP)$.
\end{thm}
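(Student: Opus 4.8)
The plan is as follows. In the border case $r = 2|\mm|-2L-2$ one has $r+L+1-|\mm| = |\mm|-L-1 = r/2$, so $\deg q = r$ is even, $r \ge 4$, and $|\mm| = r/2+L+1 \ge L+3$; in particular $X$ is not a complete intersection of two quadrics, and $i_0 = \lfloor r/(r/2)\rfloor = 2$. By \Proposition{graber} and \Proposition{midmid} we have $\mE \in \QH(X)^\res$, and since $\mE$ is homogeneous of degree $2r$ while a monomial $q^k\mH^{\star i}$ of $\QH(X)^\res$ has degree $rk+2i$, we may write
\[
  \mE = a_0\,\mH^{\star r} + a_1\,q\,\mH^{\star(r/2)} + a_2\,q^2, \qquad a_0, a_1, a_2 \in \Q .
\]
\Corollary{Deltaq2} already gives $a_0 = \chi(X)\,\mm^{-1}$ and $a_1 = (r+L+1-|\mm|-\chi(X))\,\mm^{\mm-1}$, so the entire task is to prove that the $q^2$-coefficient $a_2$ vanishes; this yields the second formula of \Conjecture{delta}, equivalent to the first by \Lemma{deg2Nclass} and \eqn{magic}.

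To isolate $a_2$, I would split off the primitive cohomology. Expanding $\mE = \sum_j \gamma_j^\vee \star \gamma_j$ with respect to the basis of $H^*(X)$ formed by the restricted classes $1, \mH, \dots, \mH^r$, whose duals are $\mm^{-1}\mH^{r-i}$, together with a basis $\{e_\alpha\}$ of $H^r(X)_{\mathrm{prim}}$ and its dual basis $\{e_\alpha^\vee\}$ inside $H^r(X)_{\mathrm{prim}}$ (the splitting $\mR \oplus H^r(X)_{\mathrm{prim}}$ is Poincar\'e-orthogonal, so the duals may indeed be chosen primitive), we obtain
\[
  \mE = \mm^{-1}\sum_{i=0}^{r} \mH^{r-i} \star \mH^i \;+\; \sum_\alpha e_\alpha^\vee \star e_\alpha .
\]
Each pair $(e_\alpha^\vee, e_\alpha)$ satisfies $\mH\cdot e_\alpha = 0$ and $e_\alpha^\vee \cdot e_\alpha = \mP$, so \Proposition{midmid} gives $e_\alpha^\vee \star e_\alpha = \mm^{-1}\mH^{\star r} - \mm^{\mm-1} q\,\mH^{\star(|\mm|-L-1)}$, which has no $q^2$-term (and the second sum is empty if the primitive cohomology vanishes). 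Hence $a_2 = \mm^{-1}\sum_{i=0}^{r} C_i$, where $C_i$ denotes the coefficient of $q^2$ in the expansion of $\mH^{r-i}\star\mH^i \in \QH(X)^\res$ in the $\Q[q]$-basis $\{1, \mH, \mH^{\star 2}, \dots, \mH^{\star r}\}$, and it remains to show $\sum_i C_i = 0$.

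The $C_i$ I would compute from \Corollary{pieriq2}, using that in the border case several of its consequences hold exactly (nothing below degree $2r$ can carry a $q^2$-term): $\mH^{\star i} = \mH^i$ for $0 \le i \le r/2-1$; $\mH^{\star(r/2)} = \mH^{r/2} + \mm!\,q$ (here $c_L = \mm!$); and $\mH^{\star(r-i)} = \mH^{r-i} + \big(\sum_{j=L}^{r/2-i+L} c_j\big)\,q\,\mH^{r/2-i}$ for $1 \le i \le r/2-1$. For $1 \le i \le r/2-1$, substituting these and using $\mH^{\star a}\star\mH^{\star b} = \mH^{\star(a+b)}$ gives $\mH^{r-i}\star\mH^i = \mH^{\star r} - \big(\sum_{j=L}^{r/2-i+L} c_j\big)\,q\,\mH^{\star(r/2)}$, so $C_i = 0$; by the symmetry $\mH^{r-i}\star\mH^i = \mH^i\star\mH^{r-i}$ one also gets $C_i = 0$ for $r/2+1 \le i \le r-1$. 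For $i = r/2$ one computes $\mH^{r/2}\star\mH^{r/2} = (\mH^{\star(r/2)} - \mm!\,q)^{\star 2} = \mH^{\star r} - 2\mm!\,q\,\mH^{\star(r/2)} + (\mm!)^2 q^2$, so $C_{r/2} = (\mm!)^2$. Finally, for $i \in \{0, r\}$, \Lemma{bordergw} (with $|\mm|-L-1 = r/2$) gives $\mH^r = \mH^{\star r} + (\mm! - \mm^\mm)\,q\,\mH^{\star(r/2)} - \tfrac{(\mm!)^2}{2}\,q^2$, so $C_0 = C_r = -\tfrac{(\mm!)^2}{2}$. Summing, $\sum_{i=0}^{r} C_i = -\tfrac{(\mm!)^2}{2} - \tfrac{(\mm!)^2}{2} + (\mm!)^2 = 0$, hence $a_2 = 0$ and \Conjecture{delta} holds.

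The only genuinely nontrivial input is \Lemma{bordergw} — the degree-$2$ Gromov--Witten number $\langle \mH^{r-1}, \mP\rangle^X_{0,2}$ and the $q^2$-term it induces in $\mH^r$ — which is already established. Everything else is a careful bookkeeping of $q^2$-coefficients, the two decisive points being that the primitive cohomology contributes nothing to $q^2$ (via \Proposition{midmid}) and that the restricted contributions $C_i$ telescope to zero. I expect the place requiring the most care is the justification that the relevant expansions coming from \Corollary{pieriq2} hold exactly rather than merely modulo $q^2$, and this is precisely where the hypothesis $\deg(q^2) = \deg(\mP)$ enters.
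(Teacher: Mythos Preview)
Your proof is correct and follows essentially the same route as the paper's: reduce to showing the $q^2$-coefficient of $\mm^{-1}\sum_{i=0}^r \mH^i \star \mH^{r-i}$ vanishes (using \Proposition{midmid} to dispose of the primitive contributions), then observe that for $0<i<r$ with $i\neq r/2$ the expansion of $\mH^i\star\mH^{r-i}$ in the $\mH^{\star\bullet}$-basis has no $q^2$-term, while the three remaining terms $i=0,\,r/2,\,r$ contribute $-\tfrac{(\mm!)^2}{2},\,(\mm!)^2,\,-\tfrac{(\mm!)^2}{2}$ via \Lemma{bordergw} and cancel. Your write-up is slightly more explicit than the paper's about why the relevant identities from \Corollary{pieriq2} hold exactly rather than only modulo $q^2$ (the degree argument $\deg(q^2)=2r>\deg(\mH^{r-i}\star\mH^i)$ for $0<i<r$), but the argument is the same.
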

\begin{proof}
  Define constants $a_0, a_1, a_2 \in \Q$ by
  \[
    \sum_{i=0}^r \mH^i \star \mH^{r-i} =
    a_0 \mH^{\star r} + a_1\, q \mH^{\star (|\mm|-L-1)} + a_2\, q^2 \,.
  \]
  By \Proposition{midmid} and \Corollary{Deltaq2}, it suffices to show that $a_2
  = 0$.

  For $0 < i < r/2$, we have $\mH^i = \mH^{\star i}$ and
  \[
    \mH^{r-i} =
    \mH^{\star (r-i)} - \left( \sum_{j=L}^{|\mm|-i-1} c_j \right)
    q H^{\star (|\mm|-L-1-i)} \,
  \]
  by \Corollary{pieriq2}. Hence $\mH^i \star \mH^{r-i}$ does not contribute to
  $a_2$ for $0 < i < r/2$.

  We are left to consider the two terms $\mH^r \star 1$ and $1 \star \mH^r$
  together with $(\mH^{|\mm|-L-1})^{\star 2}$. The first two terms do
  contribute $-(\mm!)^2$ to $a_2$ by \Lemma{bordergw}. For the third
  term, we have
  \begin{eqnarray*}
    (\mH^{|\mm|-L-1})^{\star 2} & = &
    (\mH^{\star (|\mm|-L-1)} - \mm!\, q)^{\star 2}\\
    & = &
    \mH^{\star r} - 2 \mm!\,
    q \mH^{\star (|\mm|-L-1)} + (\mm!)^2\, q^2 \,,
  \end{eqnarray*}
  which cancels the contribution of the first two. We conclude that $a_2=0$.
\end{proof}

\begin{cor}\label{cor:border}
  Assume that $\deg(q^2) = \deg(\mP)$. Then the dimension constraint holds for
  $g, d, n \geq 0$ if and only if $d = 2(n+g-1)$. In this case we have
  \[
    \vTev^X_{g,d,n} =
    \left( \mm!\, \mm^\mm - \frac{1}{2} (\mm!)^2 \right)^n
    (|\mm|-L-1)^g\, \mm^{(2g-2)\mm - n -g +1}
    \, - \, \Disc(\mP^{\star n} \star \mE^{\star g}) \,,
  \]
  where the discrepancy is given by
  \[
    \Disc(\mP^{\star n} \star \mE^{\star g}) =
    \begin{cases}
      \left(- \frac{1}{2} \mm^{-1} (\mm!)^2 \right)^{n-1}
      \left( n\, \mm!\, \mm^{-\mm} - n - \frac{1}{2} \mm^{-2\mm} (\mm!)^2
      \right)
      & \text{if $g=0$,} \\
      \left(- \frac{1}{2} \mm^{-1} (\mm!)^2 \right)^n
      \big(|\mm|-L-1 - \chi(X)\big)
      & \text{if $g=1$,} \\
      0 & \text{if $g \geq 2$.}
    \end{cases}
  \]
\end{cor}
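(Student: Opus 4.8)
The plan is to derive everything from \Proposition{tevdeg2}, whose hypothesis \Conjecture{delta} has just been verified in the range $\deg(q^2)=\deg(\mP)$, i.e.\ $r=2|\mm|-2L-2$. Writing $s:=|\mm|-L-1=r+L+1-|\mm|$, this hypothesis says $r=2s$ and $\deg(q)=2s$, so the dimension constraint \eqn{ci-constraint} collapses to $d=2(n+g-1)$ (which is solvable with $d\geq 0$ precisely when $n+g\geq 1$) and $i_0=\lfloor r/s\rfloor=2$. By \Lemma{bordergw} (which supplies the extra $q^2$-term beyond \Corollary{HN}) the expansion \eqn{Pi} of the point class is $\mP=\mm^{-1}\mH^{\star r}+((\mm-1)!-\mm^{\mm-1})\,q\mH^{\star s}+\bigl(-\tfrac12\mm^{-1}(\mm!)^2\bigr)q^2$, so $P_0=\mm^{-1}$, $P_1=(\mm-1)!-\mm^{\mm-1}$, $P_2=-\tfrac12\mm^{-1}(\mm!)^2$. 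Using the identity $\mm^1(\mm-1)!=\mm!$ one checks $\sum_{i=0}^{2}P_i\mm^{-i\mm}=\bigl(\mm!\,\mm^\mm-\tfrac12(\mm!)^2\bigr)\mm^{-2\mm-1}$, and substituting $d=2(n+g-1)$ turns the first summand of \Proposition{tevdeg2} into exactly $\bigl(\mm!\,\mm^\mm-\tfrac12(\mm!)^2\bigr)^n(|\mm|-L-1)^g\,\mm^{(2g-2)\mm-n-g+1}$. So only the discrepancy remains.

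For $g\geq 2$ I would invoke the discrepancy-freeness criterion of the remark following \Proposition{tevdeg2}: inequality \eqn{simple} becomes $gs\geq s+1$ in the border case, which holds whenever $g\geq 2$ (note $s\geq 2$, since $r=2s\geq 3$), so $\Disc(\mP^{\star n}\star\mE^{\star g})=0$. For $g\in\{0,1\}$ I would compute $\mP^{\star n}\star\mE^{\star g}$ directly inside the subring $\QH(X)^\res$, using \Conjecture{delta} in the form $\mE=\chi(X)\mm^{-1}\mH^{\star r}+(s-\chi(X))\mm^{\mm-1}q\mH^{\star s}$ and relation \eqn{magic} to reduce powers $\mH^{\star a}$ with $a>r$. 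The structural point is that $\mP$ and $\mE$ are homogeneous of degree $2r=4s$, so every product lands in the $3$-dimensional degree-$4sk$ piece $V_k$ of $\QH(X)^\res$ with basis $\{q^{2k-2}\mH^{\star r},\,q^{2k-1}\mH^{\star s},\,q^{2k}\}$, and in this basis quantum multiplication by $\mP$ and by $\mE$ act as \emph{upper-triangular} operators $V_k\to V_{k+1}$ (with repeated diagonal entry $P_2$, resp.\ $0$, on the lower $2\times 2$ block). Iterating triangular matrices is mechanical: one finds, for the coefficients $b_1,b_2$ of \Definition{disc}, that $b_2=0$ and $b_1=P_2^{\,n}(s-\chi(X))\mm^{\mm-1}$ when $g=1$, and $b_2=P_2^{\,n}$, $b_1=nP_2^{\,n-1}P_1$ when $g=0$.

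Finally I would substitute into $\Disc=b_1\mm^{1-\mm}+b_2\mm^{1-2\mm}$ and simplify via $P_1\mm^{1-\mm}=\mm!\,\mm^{-\mm}-1$ and $P_2\mm^{1-2\mm}=-\tfrac12(\mm!)^2\mm^{-2\mm}$: the $g=1$ case yields $\Disc=\bigl(-\tfrac12\mm^{-1}(\mm!)^2\bigr)^n\bigl(|\mm|-L-1-\chi(X)\bigr)$, and the $g=0$ case yields $\Disc=\bigl(-\tfrac12\mm^{-1}(\mm!)^2\bigr)^{n-1}\bigl(n\,\mm!\,\mm^{-\mm}-n-\tfrac12\mm^{-2\mm}(\mm!)^2\bigr)$, which are the three branches of the asserted formula (the boundary instances $(g,n)=(1,0)$ and $(0,1)$, where the formula must return $\chi(X)$ and $1$, serve as useful consistency checks). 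The one genuine obstacle is the careful bookkeeping in the triangular-matrix iteration for $g=0$ and $g=1$ and the ensuing algebra in the $\mm$-notation; everything else is a direct substitution into \Proposition{tevdeg2}.
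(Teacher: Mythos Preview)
Your argument is correct and follows essentially the same route as the paper. The paper's proof also reduces to \Proposition{tevdeg2}: it records the same value of $\sum_i P_i\,\mm^{-i\mm}$, then writes down $[\mP^{\star n}]^+$ and $[\mP^{\star n}\star\mE]^+$ directly (in closed form, rather than via your upper-triangular matrix iteration), and relies on the remark containing inequality \eqn{simple} for the vanishing when $g\geq 2$. Your bases $\{q^{2k-2}\mH^{\star r},\,q^{2k-1}\mH^{\star s},\,q^{2k}\}$ and the triangular action of $\mP$ and $\mE$ are exactly what underlies the paper's closed-form expressions for the non-contributing parts, so the two computations coincide once unpacked.
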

\begin{proof}
  Using the formulas of \Lemma{bordergw} and \Conjecture{delta}, we obtain
  \[
    \begin{split}
    \sum_{i=0}^{i_0} P_i\, \mm^{-i \mm} &=
    \mm^{-2\mm - 1} \left( \mm!\, \mm^\mm - \frac{1}{2} (\mm!)^2 \right) \,,
    \\
    [\mP^{\star n}]^+ &=
    \left(- \frac{(\mm!)^2}{2 \mm^1}\, q^2 \right)^{n-1}
    \left( \frac{n\, (\mm! - \mm^\mm)}{\mm^1}\, q \mH^{\star(|\mm|-L-1)} -
    \frac{(\mm!)^2}{2 \mm^1}\, q^2 \right) \,\text{, and}
    \\
    [\mP^{\star n} \star \mE]^+ &=
    \left(- \frac{(\mm!)^2}{2 \mm^1}\, q^2 \right)^n
    \big(|\mm|-L-1-\chi(X)\big)\, \mm^{\mm-1}\, q \mH^{\star(|\mm|-L-1)} \,.
    \end{split}
  \]
  The Corollary therefore follows from \Proposition{tevdeg2}.
\end{proof}

\begin{example}\label{example:vTev_neg}
  Let $X \subset \bP^7$ be the intersection of three general quadrics, so
  $r=4$ and $\mm=(2,2,2)$. For $g=n=1$ and $d=2$, \Corollary{border} gives
  \[
    \vTev^X_{1,2,1} = 120 - 184 = -64 \,.
  \]
  In particular, virtual Tevelev degrees can be negative.
\end{example}

By \Proposition{flagINT}, all virtual Tevelev degrees for flag varieties are
integers. It would be interesting to know if all virtual Tevelev degrees are
integers for all varieties  $X$. We have not seen a counterexample when $X$ is a
complete intersection satisfying $|\mm| \leq r+L-1$.

\ifdefined\mybibfile
\bibliography{\mybibfile}

\begin{thebibliography}{BCMP21}

\bibitem[ABPZ21]{arguz.bousseau.ea:gromov-witten}
H.~Arg\"uz, P.~Bousseau, R.~Pandharipande, and D.~Zvonkine.
\newblock Gromov-{W}itten theory of complete intersections.
\newblock arXiv:2109.13323, 2021.

\bibitem[Abr00]{abrams:quantum}
L.~Abrams.
\newblock The quantum {E}uler class and the quantum cohomology of the
  {G}rassmannians.
\newblock {\em Israel J. Math.}, 117:335--352, 2000.

\bibitem[BCMP21]{buch.chaput.ea:positivity}
A.~S. Buch, P.-E. Chaput, L.~C. Mihalcea, and N.~Perrin.
\newblock Positivity of minuscule quantum {$K$}-theory.
\newblock in preparation, 2021.

\bibitem[Beh97]{behrend:gromov-witten}
K.~Behrend.
\newblock Gromov-{W}itten invariants in algebraic geometry.
\newblock {\em Invent. Math.}, 127(3):601--617, 1997.

\bibitem[Beh99]{behrend:product}
K.~Behrend.
\newblock The product formula for {G}romov-{W}itten invariants.
\newblock {\em J. Algebraic Geom.}, 8(3):529--541, 1999.

\bibitem[Ber97]{bertram:quantum}
A.~Bertram.
\newblock Quantum {S}chubert calculus.
\newblock {\em Adv. Math.}, 128(2):289--305, 1997.

\bibitem[BKT03]{buch.kresch.ea:gromov-witten}
A.~S. Buch, A.~Kresch, and H.~Tamvakis.
\newblock Gromov-{W}itten invariants on {G}rassmannians.
\newblock {\em J. Amer. Math. Soc.}, 16(4):901--915, 2003.

\bibitem[Buc]{buch:equivariant}
A.~S. Buch.
\newblock Equivariant {S}chubert {C}alculator, a {M}aple package for
  computations in the equivariant cohomology and {$K$}-theory of flag
  manifolds.
\newblock Available at {\tt https://math.rutgers.edu/$\sim$asbuch/equivcalc/}.

\bibitem[Buc03]{buch:quantum}
A.~S. Buch.
\newblock Quantum cohomology of {G}rassmannians.
\newblock {\em Compositio Math.}, 137(2):227--235, 2003.

\bibitem[CL21]{cela.lian:generalized}
A.~Cela and C.~Lian.
\newblock Generalized {T}evelev degrees of $\mathbb{P}^1$.
\newblock arXiv:2111.05880, 2021.

\bibitem[CMP07]{chaput.manivel.ea:quantum}
P.-E. Chaput, L.~Manivel, and N.~Perrin.
\newblock Quantum cohomology of minuscule homogeneous spaces. {II}. {H}idden
  symmetries.
\newblock {\em Int. Math. Res. Not. IMRN}, (22):Art. ID rnm107, 29, 2007.

\bibitem[CMP08]{chaput.manivel.ea:quantum*1}
P.-E. Chaput, L.~Manivel, and N.~Perrin.
\newblock Quantum cohomology of minuscule homogeneous spaces.
\newblock {\em Transform. Groups}, 13(1):47--89, 2008.

\bibitem[CMP09]{chaput.manivel.ea:affine}
P.-E. Chaput, L.~Manivel, and N.~Perrin.
\newblock Affine symmetries of the equivariant quantum cohomology ring of
  rational homogeneous spaces.
\newblock {\em Math. Res. Lett.}, 16(1):7--21, 2009.

\bibitem[CMP10]{chaput.manivel.ea:quantum*2}
P.-E. Chaput, L.~Manivel, and N.~Perrin.
\newblock Quantum cohomology of minuscule homogeneous spaces {III}.
  {S}emi-simplicity and consequences.
\newblock {\em Canad. J. Math.}, 62(6):1246--1263, 2010.

\bibitem[CPS21]{cela.pandharipande.ea:tevelev}
A.~Cela, R.~Pandharipande, and J.~Schmitt.
\newblock Tevelev degrees and {H}urwitz moduli spaces.
\newblock {\em Math. Proc. Cambridge Philos. Soc.} (to appear),
  arXiv:2103.14055, 2021.

\bibitem[Del80]{deligne:conjecture*2}
P.~Deligne.
\newblock La conjecture de {W}eil. {II}.
\newblock {\em Inst. Hautes \'Etudes Sci. Publ. Math.}, (52):137--252, 1980.

\bibitem[FL21]{farkas.lian:linear}
G.~Farkas and C.~Lian.
\newblock Linear series on general curves with prescribed incidence conditions.
\newblock arXiv:2105.09340, 2021.

\bibitem[FP97]{fulton.pandharipande:notes}
W.~Fulton and R.~Pandharipande.
\newblock Notes on stable maps and quantum cohomology.
\newblock In {\em Algebraic geometry---{S}anta {C}ruz 1995}, volume~62 of {\em
  Proc. Sympos. Pure Math.}, pages 45--96. Amer. Math. Soc., Providence, RI,
  1997.

\bibitem[FW04]{fulton.woodward:quantum}
W.~Fulton and C.~Woodward.
\newblock On the quantum product of {S}chubert classes.
\newblock {\em J. Algebraic Geom.}, 13(4):641--661, 2004.

\bibitem[Giv98]{givental:mirror}
A.~Givental.
\newblock A mirror theorem for toric complete intersections.
\newblock In {\em Topological field theory, primitive forms and related topics
  ({K}yoto, 1996)}, volume 160 of {\em Progr. Math.}, pages 141--175.
  Birkh\"auser Boston, Boston, MA, 1998.

\bibitem[Hu15]{hu:big}
X.~Hu.
\newblock Big quantum cohomology of {F}ano complete intersections.
\newblock arXiv:1501.03683, 2015.

\bibitem[KT03]{kresch.tamvakis:quantum}
A.~Kresch and H.~Tamvakis.
\newblock Quantum cohomology of the {L}agrangian {G}rassmannian.
\newblock {\em J. Algebraic Geom.}, 12(4):777--810, 2003.

\bibitem[KT04]{kresch.tamvakis:quantum*1}
A.~Kresch and H.~Tamvakis.
\newblock Quantum cohomology of orthogonal {G}rassmannians.
\newblock {\em Compos. Math.}, 140(2):482--500, 2004.

\bibitem[LP21]{lian.pandharipande:enumerativity}
C.~Lian and R.~Pandharipande.
\newblock Enumerativity of virtual {T}evelev degrees.
\newblock arXiv:2110.05520, 2021.

\bibitem[Pan98]{pandharipande:rational}
R.~Pandharipande.
\newblock Rational curves on hypersurfaces (after {A}. {G}ivental).
\newblock {\em Ast\'erisque}, (252):Exp.\ No.\ 848, 5, 307--340, 1998.
\newblock S{\'e}minaire Bourbaki. Vol. 1997/98.

\bibitem[Pos05]{postnikov:affine}
A.~Postnikov.
\newblock Affine approach to quantum {S}chubert calculus.
\newblock {\em Duke Math. J.}, 128(3):473--509, 2005.

\bibitem[Ste96]{stembridge:fully}
J.~R. Stembridge.
\newblock On the fully commutative elements of {C}oxeter groups.
\newblock {\em J. Algebraic Combin.}, 5(4):353--385, 1996.

\bibitem[Tev20]{tevelev:scattering}
J.~Tevelev.
\newblock Scattering amplitudes of stable curves.
\newblock arXiv:2007.03831, 2020.

\bibitem[Wit95]{witten:verlinde}
E.~Witten.
\newblock The {V}erlinde algebra and the cohomology of the {G}rassmannian.
\newblock In {\em Geometry, topology, \& physics}, Conf. Proc. Lecture Notes
  Geom. Topology, IV, pages 357--422. Int. Press, Cambridge, MA, 1995.

\end{thebibliography}
\else

\fi
\bibliographystyle{halpha}

\end{document}